\theoremstyle{plain}
\newtheorem{theorem}{Theorem}[section]
\newtheorem{lemma}[theorem]{Lemma}
\newtheorem{corollary}[theorem]{Corollary}
\theoremstyle{definition}
\newtheorem{definition}[theorem]{Definition}
\newtheorem{remark}[theorem]{Remark}
\numberwithin{equation}{section}
\DeclareMathOperator*{\osc}{osc}
\author{Sunghoon Kim}
\address{Department of Mathematics, The Catholic University of Korea, 43 Jibong-ro, Bucheon-si, Gyeonggi-do, 14662, Republic of Korea}
\email{math.s.kim@catholic.ac.kr}
\author{Ki-Ahm Lee}
\address{Department of Mathematical Sciences and Research Institute of Mathematics,
	Seoul National University, Seoul 08826, Republic of Korea}
\email{kiahm@snu.ac.kr}
\author{Se-Chan Lee}
\address{School of Mathematics, Korea Institute for Advanced Study, Seoul 02455, Republic of Korea}
\email{sechan@kias.re.kr}
\author{Minha Yoo}
\address{National Institute for Mathematical Sciences, Daejeon 34047, Republic of Korea}
\email{mhyoo@nims.re.kr}
\thanks{Sunghoon Kim was supported by the Research Fund 2022 of The Catholic University of Korea. Ki-Ahm Lee was supported by the National Research Foundation of Korea (NRF) grant funded by the Korea government (MSIP): NRF-2020R1A2C1A01006256. Se-Chan Lee was supported by the KIAS Individual Grant (No. MG099001) at the Korea Institute for Advanced Study. Minha You was supported by the National Institute for Mathematical Sciences (NIMS) granted by the Korean government (NIMS-B24910000)}
\begin{document}
\title[Homogenization of an obstacle problem]{Homogenization of an obstacle problem with highly oscillating coefficients and obstacles}

\keywords{Periodic homogenization; Obstacle problem; Viscosity method}
\subjclass[2020]{35B27; 35D40; 35R35}

\begin{abstract}
	We develop the viscosity method for the homogenization of an obstacle problem with highly oscillating obstacles. The associated operator, in non-divergence form, is linear and elliptic with variable coefficients. We first construct a highly oscillating corrector, which captures the singular behavior of solutions near periodically distributed holes of critical size. We then prove the uniqueness of a critical value that encodes the coupled effects of oscillations in both the coefficients and the obstacles.
\end{abstract}

\maketitle


\section{Introduction}\label{sec-introduction}

In this paper, we are concerned with the homogenization of an obstacle problem with two different oscillating structures. To be precise, let $\Omega$ be a bounded $C^{1, 1}$-domain in $\mathbb{R}^n$ with $n \geq 3$. Throughout the paper, we suppose that $(a_{ij}(\cdot))_{1 \leq i, j \leq n}$ is a symmetric matrix-valued function defined in $\mathbb{R}^n$ satisfying the following conditions:
\begin{enumerate}[(i)]
	\item (Periodicity) $a_{ij}(y+k)=a_{ij}(y)$ for any $y \in \mathbb{R}^n$ and $k \in \mathbb{Z}^n$;
	
	\item (Uniformly ellipticity) there exist positive constants $\lambda \leq \Lambda$ such that
	\begin{equation*}
		\text{$\lambda|\xi|^2 \leq a_{ij}(y)\xi_i\xi_j \leq \Lambda|\xi|^2$ for any $y, \xi \in \mathbb{R}^n$;}
	\end{equation*}
	
	\item (Regularity) $a_{ij}(\cdot)$ is of {Dini mean oscillation}; see \Cref{sec-preliminaries} for the precise definition. 
\end{enumerate}
On the other hand, for sufficiently small $\varepsilon>0$, we define a set 
\begin{equation*}
	T_{a^{\varepsilon}}:=\bigcup_{k \in \mathbb{Z}^n}B_{a^{\varepsilon}}(\varepsilon k),
\end{equation*}
with the critical hole size $a^{\varepsilon}:=\varepsilon^{n/(n-2)}$. Moreover, for a smooth function $\varphi$ in $\Omega$ such that $\varphi \leq 0$ on $\partial \Omega$, we set highly oscillating obstacles $\varphi_{\varepsilon}$ by
\begin{equation*}
\varphi_{\varepsilon}(x):=
\left\{ \begin{array}{ll} 
\varphi(x) & \text{if $x \in T_{a^{\varepsilon}}$} \\
0 & \text{otherwise}.
\end{array} \right.
\end{equation*}

We now study an obstacle problem with highly oscillating coefficients and obstacles, that is, we are interested in the least viscosity supersolution $u_{\varepsilon}$ of the following problem:
\begin{equation}\label{problem-Le}\tag{$L_{\varepsilon}$}
\left\{ \begin{aligned} 
a_{ij}(x/\varepsilon) D_{ij}u_{\varepsilon}(x) &\leq 0 && \text{in $\Omega$} \\
u_{\varepsilon}&=0 && \text{on $\partial \Omega$} \\
u_{\varepsilon} &\geq \varphi_{\varepsilon} && \text{in $\Omega$}.
\end{aligned} \right.
\end{equation}
Our goal in the present paper is to describe the limit profile $u$ of $u_{\varepsilon}$ in a suitable convergence sense and to determine the effective equation satisfied by $u$. For this purpose, we define
	\begin{equation*}
		\underline{u}_{\varepsilon}(x):=
		\begin{cases}
			u_{\varepsilon}(x) & \text{if $x \in Q_{\varepsilon}(\varepsilon k) \setminus B_{b^{\varepsilon}}(\varepsilon k)$ for some $k \in \mathbb{Z}^n$}\\
			\min_{\partial B_{b^{\varepsilon}}(\varepsilon k)} u_{\varepsilon} & \text{if $x \in B_{b^{\varepsilon}}(\varepsilon k)$ for some $k \in \mathbb{Z}^n$},
	\end{cases}
	\end{equation*}
where $b^{\varepsilon}:=(\varepsilon a^{\varepsilon})^{1/2}$ and $Q_r(x_0):=x_0+[-r/2, r/2]^n$. Then our homogenization result reads as follows. 
\begin{theorem}\label{thm-main}
	Let $u_{\varepsilon}$ be the least viscosity supersolution of \eqref{problem-Le}.
	\begin{enumerate}[(i)]
		\item (Convergence) There exists a function $u \in C(\overline{\Omega})$ such that $\underline{u}_{\varepsilon} \to u$ uniformly in $\Omega$ as $\varepsilon \to 0$. In particular, $u_{\varepsilon} \to u$ in $L^p(\Omega)$ as $\varepsilon \to 0$ for any $p \geq 1$.
		
		\item (Homogenized equation) There exist a nonnegative value $\beta_0$ and a uniformly elliptic constant  matrix $\overline{a}_{ij}$ such that $u$ is a viscosity solution of
		\begin{equation}\tag{$\overline{L}$}\label{eq-homo}
		\left\{ \begin{aligned}
		\overline{a}_{ij} D_{ij}u+\beta_0 (\varphi-u)_+ &=0 && \text{in $\Omega$} \\
		u&=0 && \text{on $\partial \Omega$}.
		\end{aligned} \right.
		\end{equation}
	\end{enumerate} 
\end{theorem}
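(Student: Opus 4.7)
I would combine standard periodic homogenization in non-divergence form with a capacitary corrector in the spirit of Cioranescu--Murat. The argument has three main stages: build the homogenized matrix, build a corrector that captures the singular contribution of the holes, and close the loop by a barrier/comparison argument in the viscosity framework.

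For the effective matrix, I would invoke the classical non-divergence cell problem: find $\mathbb{Z}^n$-periodic functions $\chi^{ij}$ solving $a_{kl}(y) D_{kl}\chi^{ij}(y) = \overline{a}_{ij} - a_{ij}(y)$, where $\overline{a}_{ij}$ is the average of $a_{ij}$ against the invariant measure of the adjoint operator (and inherits uniform ellipticity). The Dini mean oscillation assumption on $a_{ij}$ is what ensures $\chi^{ij} \in C^{2}$, so that the first-order ansatz can be plugged in pointwise.

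Next, the critical scaling $a^{\varepsilon}=\varepsilon^{n/(n-2)}$ is chosen exactly so that the capacity per period cell is of order one, giving an $O(1)$ capacitary contribution per hole. I would construct a corrector $w_{\varepsilon}$, extended by $1$ inside $B_{a^{\varepsilon}}(\varepsilon k)$ and by $0$ outside $B_{b^{\varepsilon}}(\varepsilon k)$, solving
\[
a_{ij}(x/\varepsilon) D_{ij} w_{\varepsilon} = 0 \text{ in } B_{b^{\varepsilon}}(\varepsilon k) \setminus B_{a^{\varepsilon}}(\varepsilon k), \quad w_{\varepsilon}=1 \text{ on } \partial B_{a^{\varepsilon}}(\varepsilon k), \quad w_{\varepsilon}=0 \text{ on } \partial B_{b^{\varepsilon}}(\varepsilon k).
\]
A two-scale blow-up $y=x/a^{\varepsilon}$ reduces this to an exterior problem on $\mathbb{R}^n \setminus B_1$ whose coefficient matrix is $\overline{a}_{ij}$ after the inner averaging; this computation simultaneously identifies $\beta_0 \geq 0$ as the corresponding capacitary functional and yields the distributional convergence $-a_{ij}(x/\varepsilon) D_{ij} w_{\varepsilon} \rightharpoonup \beta_0$.

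Finally, the modification $\underline{u}_{\varepsilon}$ removes the small-scale oscillations inside each $B_{b^{\varepsilon}}(\varepsilon k)$, so the family enjoys uniform Krylov--Safonov estimates on the annular complement and, together with the $C^{1,1}$ regularity of $\partial \Omega$, uniform H\"older continuity up to the boundary; Arzel\`a--Ascoli then extracts a limit $u \in C(\overline{\Omega})$ along a subsequence. To show $u$ solves \eqref{eq-homo}, I would take a smooth $\phi$ touching $u$ strictly from below at a point $x_0\in\Omega$ and test with the corrected barrier
\[
\phi_{\varepsilon}(x) = \phi(x) + \varepsilon^{2}\, D_{ij}\phi(x_0)\, \chi^{ij}(x/\varepsilon) + (\varphi(x_0) - \phi(x_0))_{+}\, w_{\varepsilon}(x),
\]
whose image under $a_{ij}(x/\varepsilon) D_{ij}$ equals $\overline{a}_{ij} D_{ij}\phi(x_0) + \beta_0(\varphi(x_0)-\phi(x_0))_{+}$ at leading order. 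Since $u_{\varepsilon}$ is the \emph{least} supersolution of \eqref{problem-Le}, comparison against $\phi_{\varepsilon}$ (and a symmetric construction from above, using that $u_{\varepsilon}$ is a subsolution on the coincidence-free region) yields the viscosity inequalities for \eqref{eq-homo} in the limit. The principal obstacle is the simultaneous two-scale passage: I must show that near each hole the corrector $w_{\varepsilon}$ sees the homogenized matrix $\overline{a}_{ij}$ rather than the oscillating $a_{ij}(x/\varepsilon)$, and that the resulting $\beta_0$ is intrinsic --- independent of the cutoff scale $b^{\varepsilon}$ and of the particular corrector used. Establishing this well-posedness of the exterior cell problem and the sharp capacity estimates in the critical regime, under only Dini mean oscillation regularity, will be the most delicate part of the proof.
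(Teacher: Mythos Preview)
Your overall architecture (cell corrector $+$ capacitary corrector $+$ perturbed test function) matches the paper, but there is a genuine gap in the identification of $\beta_0$.

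When you blow up near a hole at scale $a^{\varepsilon}$, writing $x=\varepsilon k + a^{\varepsilon} z$, the coefficient becomes $a_{ij}((x/\varepsilon))=a_{ij}(k+\overline a^{\varepsilon} z)=a_{ij}(\overline a^{\varepsilon} z)$ with $\overline a^{\varepsilon}=a^{\varepsilon}/\varepsilon=\varepsilon^{2/(n-2)}\to 0$. Thus the rescaled problem sees the \emph{frozen} coefficient $a_{ij}(0)$, not the homogenized $\overline a_{ij}$; there is no ``inner averaging'' at this scale. So the exterior cell problem you describe has operator $a_{ij}(0)D_{ij}$, and your proposed identification of $\beta_0$ with a capacity for $\overline a_{ij}$ is not what the blow-up produces. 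Worse, the paper shows by explicit example that $\beta_0$ is not even equal to $\gamma_0=\mathrm{cap}_{a_{ij}(0)}(B_1)$: an extra term $\lim_{\varepsilon\to 0}\int_{Q_1\setminus B_{\overline a^{\varepsilon}}} D_{ij}a_{ij}\,W^{\varepsilon}$ survives in general. The uniqueness of $\beta_0$ is therefore not a consequence of any single exterior problem but requires an independent argument (the paper subtracts $\gamma_0$ times the Green's function, shows the remainder stays bounded, and uses a strong minimum principle to rule out two distinct limits).

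A secondary issue is your corrector design. Your $w_\varepsilon$ is harmonic in an annulus and extended by constants, so $a_{ij}(x/\varepsilon)D_{ij}w_\varepsilon$ is a signed surface measure on $\partial B_{b^{\varepsilon}}$; the distributional convergence you invoke is the Cioranescu--Murat energy-method device, which does not plug cleanly into a viscosity comparison. The paper instead builds an $\varepsilon$-periodic $w_2^{\varepsilon}$ solving $a_{ij}(x/\varepsilon)D_{ij}w_2^{\varepsilon}=\beta^{\varepsilon}$ (a constant) in $\mathbb{R}^n\setminus T_{a^{\varepsilon}}$ with $w_2^{\varepsilon}=1$ on $\partial T_{a^{\varepsilon}}$ and $\inf w_2^{\varepsilon}=0$; the constant right-hand side makes the corrected test polynomial $\widetilde P + \varepsilon^2 w_{1,D^2\widetilde P}(\cdot/\varepsilon) + c\,w_2^{\varepsilon}$ a genuine classical sub/supersolution, so the barrier argument goes through without any weak-convergence step. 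Existence of such a pair $(\beta^{\varepsilon},w_2^{\varepsilon})$ and the uniqueness of $\beta_0=\lim\beta^{\varepsilon}$ are the real content of the proof and are not addressed by your outline.
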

Let us make a couple of remarks on \Cref{thm-main}. First of all, the limit solution $u$ satisfies no longer an obstacle problem but an elliptic equation of a constant coefficient \textit{without an obstacle}. Roughly speaking, the effect of a rapidly oscillating obstacle with critical hole size is encoded in the additional term $\beta_0(\varphi-u)_+$, which is called a `strange term coming from nowhere' in \cite{CM82a, CM82b}. The precise description of $\overline{a}_{ij}$ and $\beta_0$ will be presented in \Cref{sec-correctors}. On the other hand, if the hole size is not critical, then the limit behaviors of $u_{\varepsilon}$ becomes relatively simple; see \Cref{sec-noncritical} for details.

Moreover, we observe that $u_{\varepsilon}$ has an oscillation of order $1$ in each cube $Q_{\varepsilon}$ of size $\varepsilon$, due to the obstacle condition given by $u_{\varepsilon} \geq \varphi$ in $T_{a^{\varepsilon}}$. In other words, the behavior of $u_{\varepsilon}$ is strongly restricted by the existence of an obstacle function $\varphi$, and so it is impossible to achieve the uniform convergence of $u_{\varepsilon}$ in the whole domain $\Omega$. We instead illustrate the uniform convergence of modified functions $\underline{u}_{\varepsilon}$, which capture the ``away from holes" behavior of $u_{\varepsilon}$ by introducing the intermediate scale $b^{\varepsilon}$.

The proof of \Cref{thm-main} consists of several steps. In fact, we begin with controlling the oscillation of $u_{\varepsilon}$ ``away from holes" in \Cref{lem-discrete} and \Cref{lem-cubeosc}. Then we demonstrate the uniform convergence of $\underline{u}_{\varepsilon}$ to some limit function $u$ up to subsequence in \Cref{thm-convergence}. We emphasize that the uniqueness of such limit $u$ is not guaranteed yet in this step, and it only corresponds to the partial result of \Cref{thm-main} (i). The uniqueness of $u$, the convergence in the full sequence, and the effective equation of $u$ will be established in \Cref{sec-homogenization}, after constructing appropriate correctors in \Cref{sec-correctors}. We also clarified several delicate issues that appeared in the previous literature such as \cite{CL08, LL23}, by providing the suitable choices of domains and points associated with the comparison principle. 

We next move our attention to the issue of correctors, which plays a crucial role in explaining the difference between $u_{\varepsilon}$ and $u$. The key step in most homogenization problems is to construct corrector functions having appropriate properties depending on the situation. For the setting of obstacle problems with two different oscillatory characters, we would like to construct a corrector $w_P^{\varepsilon}$ (for a quadratic polynomial $P$) satisfying
	\begin{enumerate}[(P1)]
		\item $a_{ij}D_{ij}P+a_{ij}D_{ij}w_P^{\varepsilon}=\overline{a}_{ij}D_{ij}P+\beta_0$;
        \item $w_P^{\varepsilon}\approx1$ on $\partial T_{a^{\varepsilon}}$;
		\item $w_P^{\varepsilon} \to 0$ ``away from holes" as $\varepsilon \to 0$.
	\end{enumerate}
	By exploiting the linear nature of \eqref{problem-Le}, we expect that the corrector $w_P^{\varepsilon}$ consists of essentially two parts: $w_{1, D^2P}$ for the coefficient oscillation (which describes $\overline{a}_{ij}$) and $w_2^{\varepsilon}$ for the obstacle oscillation (which describes $\beta_0$). Roughly speaking, $w_{1, D^2P}$ is a $1$-periodic function satisfying
	\begin{equation*}
		a_{ij}(y)D_{ij}P+a_{ij}(y)D_{ij}w_{1, D^2P}(y)=\overline{a}_{ij}D_{ij}P 
	\end{equation*}
	and $w_2^{\varepsilon}$ has an oscillation of order $1$ satisfying 
	\begin{equation*}
\left\{ \begin{aligned} 
a_{ij}(x/\varepsilon)D_{ij}w_{2}^{\varepsilon}(x)&=\beta_0 && \text{in $\mathbb{R}^n \setminus T_{a^{\varepsilon}}$} \\
w_{2}^{\varepsilon}&=1  && \text{on $\partial T_{a^{\varepsilon}}$}.
\end{aligned} \right.
\end{equation*}
See \Cref{sec-correctors} and \Cref{sec-homogenization} for the precise definition of $w_{1, D^2P}$, $w_2^{\varepsilon}$, and for the full structure of $w_P^{\varepsilon}$, respectively. Since the corrector $w_{1, D^2P}$ for the coefficient part can be taken from the one developed in \cite{Eva89, Eva92}, we would like to concentrate on the construction of $w_2^{\varepsilon}$ (or $w^{\varepsilon}$ for simplicity) as follows.

\begin{enumerate}[(i)]
    \item  We first show the existence of a value $\beta^{\varepsilon}>0$ such that the problem
\begin{equation*}
			\left\{ \begin{aligned} 
				a_{ij}(y)D_{ij}W^{\varepsilon}(y)&=\beta^{\varepsilon} && \text{in $\mathbb{R}^n \setminus \bigcup_{k \in \mathbb{Z}^n} B_{\overline{a}^{\varepsilon}}(k)$} \\
				W^{\varepsilon}&=\varepsilon^{-2}  && \text{on $\bigcup_{k \in \mathbb{Z}^n} \partial B_{\overline{a}^{\varepsilon}}(k)$}\\
				\inf W^{\varepsilon}&=0  && 
			\end{aligned} \right.
		\end{equation*}
		admits a unique $1$-periodic solution $W^{\varepsilon}$, where $\overline{a}^{\varepsilon}:=a^{\varepsilon}/\varepsilon$. We note that $W^{\varepsilon}$ is a scaled corrector in the fast variable $y=x/\varepsilon$. The key observation is that the desired shape of $W^{\varepsilon}$ looks like a singular solution with an isolated singularity. Therefore, we utilize Green's functions $G$ for the operator $L=a_{ij}D_{ij}$ to construct periodic viscosity super/subsolutions. See \Cref{rmk-green} for the definition of Green's functions.

  \item  We next prove that the unique limit $\beta_0$ of $\beta^{\varepsilon}$ exists, which is called the critical value. The proof of this step also relies on the investigation of the asymptotic behavior of $W^{\varepsilon}$ near each singularity point $k \in \mathbb{Z}^n$. In fact, this step becomes more involved compared to the one in the preceding literature for the constant coefficient, because an additional effort is required to analyze the effect of the coefficient oscillation. To be precise, (a) we first extend the function $W^{\varepsilon}-\gamma_0 G$, where $\gamma_0:=\mathrm{cap}_{a_{ij}(0)}(B_1)$ and $G$ is the Green's function for $a_{ij}D_{ij}$ in $B_{1/2}$, by employing the freezing coefficient argument in a different scale; (b) we then verify the extended function nicely converges to a bounded function without singularity in the scale of the fast variable, by establishing a uniform estimate in terms of the optimal $L^1$-data for properly decomposed solutions.

  \item We finally define the corrector $w_2^{\varepsilon}$ by scaling back. The special choice of $\beta_0$ allows us to transport appropriate properties of the periodic corrector $W^{\varepsilon}$ into the corrector $w_2^{\varepsilon}$ in the slow variable $x$. In particular, $w_2^{\varepsilon}$ converges to $0$ ``away from holes" when $\varepsilon$ goes to $0$; see \Cref{lem-convergence}.
\end{enumerate}

Let us now summarize several historical backgrounds related to our homogenization result. We refer to two comprehensive books \cite{BLP78, JKO94} and references therein for the overview. In particular, if an oscillation takes place either only for the coefficient or the obstacle, then the correctors and the corresponding homogenization results are relatively well studied. Let us begin with the homogenization of \eqref{problem-Le} without an obstacle. More precisely, if $u_{\varepsilon}$ solves the following $\varepsilon$-problems without obstacles
\begin{equation*}
	\left\{ \begin{aligned} 
		a_{ij}(x/\varepsilon) D_{ij}u_{\varepsilon}(x) &=f && \text{in $\Omega$} \\
		u_{\varepsilon}&=g && \text{on $\partial \Omega$},
		\end{aligned}\right.
\end{equation*}
then $u_{\varepsilon}$ converge to a limit profile $u$ as $\varepsilon \to 0$, which solves a problem
\begin{equation*}
	\left\{ \begin{aligned} 
		\overline{a}_{ij} D_{ij}u_{\varepsilon} &=f && \text{in $\Omega$} \\
		u&=g && \text{on $\partial \Omega$}
	\end{aligned}\right.
\end{equation*}
for some constant matrix $\overline{a}_{ij}$; see \cite{Eva89} for details. We also refer to \cite{Eva92} for the fully nonlinear version of such result, \cite{KL16} for the higher order convergence rates, \cite{AL89, KL22} for the uniform estimates, and \cite{AKM17, AL17, CS10, CSW05} for the convergences and convergence rates in the stochastic framework.  

On the other hand, the homogenization of an obstacle problem whose coefficient does not oscillate in the microscopic scale is known in the literature, such as \cite{CM82a, CM82b} via the energy method and \cite{CL08} via the viscosity method. To be precise, if $u_{\varepsilon}$ solves the problem \eqref{problem-Le} with a Laplacian operator $a_{ij}(\cdot)\equiv \delta_{ij}$, then the limit $u=\lim_{\varepsilon \to 0}u_{\varepsilon}$ solves a non-obstacle problem \eqref{eq-homo} with the same coefficient $\overline{a}_{ij}=\delta_{ij}$. We would like to point out that, in this Laplacian case, the critical value $\beta_0$ is explicitly given by the capacity of a unit ball; see \cite{CL08, CM82a, CM82b} for details. For the critical value $\beta_0$ in the variable coefficient case, we are going to provide a remark at \Cref{sec-homogenization} together with a concrete example. Moreover, similar consequences can be found in \cite{CM09, LL23, Tan12} for the stationary ergodic framework, \cite{KL11} for the porous medium equations, and \cite{LY12} for the soft inclusion problem.

The paper is organized as follows. In \Cref{sec-preliminaries}, we collect important preliminary results regarding Green's functions. \Cref{sec-limit} consists of the fundamental properties of the $\varepsilon$-solutions $u_{\varepsilon}$ which are strongly related to the convergence of $u_{\varepsilon}$. In \Cref{sec-correctors}, we first present known results on the correctors for coefficients and then construct correctors for obstacles together with proving the uniqueness of the critical value $\beta_0$. \Cref{sec-homogenization} is devoted to the proof of \Cref{thm-main}. Finally, we shortly describe the behavior of solutions when the hole size is non-critical in \Cref{sec-noncritical}.

\section{Preliminaries}\label{sec-preliminaries}
Since the construction of correctors strongly relies on the viscosity method, the starting point is to capture the asymptotic behavior of oscillating obstacles $\varphi_{\varepsilon}$ near holes. For this purpose, we introduce a Green's function for a uniformly elliptic and linear operator $L$ in non-divergence form. We begin with the definition of Green's functions in the non-divergent framework.
\begin{remark}[\cite{Bau84}]\label{rmk-green}
    By the $L^p$-theory (see \cite[Chapter 9]{GT83} for instance), for each $f \in L^p(\Omega)$ with $p>n$, there exists a unique function $u \in W^{2, p}_{\mathrm{loc}}(\Omega) \cap C(\overline{\Omega})$ satisfying
    \begin{equation}\label{eq-dirichletproblem}
		\left\{ \begin{aligned}
		Lu={a}_{ij} D_{ij}u &=-f && \text{in $\Omega$} \\
		u&=0 && \text{on $\partial \Omega$},
		\end{aligned} \right.
    \end{equation}
       together with the estimate
       \begin{equation*}
           \|u\|_{L^{\infty}(\Omega)} \leq C\|f\|_{L^{p}(\Omega)}.
       \end{equation*}
       Thus, for each fixed $y \in \Omega$, the mapping $f \mapsto u(y)$ is a continuous positive linear functional on $L^p(\Omega)$. The Riesz representation theorem then implies the existence of a nonnegative function $G(y, \cdot) \in L^{p/(p-1)}(\Omega)$ such that 
       \begin{equation*}
        u(y)= \int_{\Omega}G(y, x)f(x)\,dx.   
       \end{equation*}
        The function $G(\cdot, \cdot)$ is called the \textit{Green's function} for $L$ in $\Omega$.
\end{remark}

We then recall the definition of Dini mean oscillation on the coefficient matrix $A=(a_{ij})$, which guarantees nice properties of Green's functions for linear equations in non-divergence form. 
\begin{definition}
    We say that a function $g : \Omega \to \mathbb{R}$ is of \textit{Dini mean oscillation} in $\Omega$ if the mean oscillation function $\omega_g :  \mathbb{R}_+ \to \mathbb{R}$ defined by
    \begin{equation*}
        \omega_g(r):=\sup_{x \in \overline{\Omega}} \, \fint_{\Omega \cap B_r(x)} |g(y)-\overline{g}_{\Omega \cap B_r(x)}|\, dy, \quad \text{where}  \quad \overline{g}_{\Omega \cap B_r(x)}:=\fint_{\Omega \cap B_r(x)}g(y)\,dy
    \end{equation*}
    satisfies the Dini condition, i.e.,
    \begin{equation*}
        \int_0^1 \frac{\omega_g(t)}{t} \,dt <\infty.
    \end{equation*}
\end{definition}

\begin{remark}\label{rmk-dini}
    We remark that the following implications hold:
    \begin{equation*}
    \begin{aligned}
        &\text{$g$ is $\alpha$-H\"older continuous for some $\alpha \in (0,1)$} \implies \text{$g$ is Dini continuous} \\
        &\quad \implies \text{$g$ is of Dini mean oscillation} \implies  \text{$g$ is uniformly continuous}.
    \end{aligned}
    \end{equation*}
    See \cite[Section 1]{HK20} for instance. Moreover, if  $a_{ij}$ is of Dini mean oscillation, then strong solutions $u$ of $a_{ij}D_{ij}u=0$  belong to $C^2$-space; see \cite[Theorem 1.6]{DK17} and \cite[Theorem 1.5]{DEK18}.
\end{remark}

We now turn our attention to Green's functions; consider an approximated Green's function $G^{\sigma}(y, x_0)$ for $\sigma \in (0,1)$ and $y, x_0 \in \mathbb{R}^n$, as in \cite{GW82, HK20}:
\begin{equation*}
	\left\{
	\begin{aligned}
		a_{ij}(y)D_{ij}G^{\sigma}(y, x_0)&=-f_{\sigma}(y, x_0):=-\frac{1}{|B_{\sigma}(x_0)|}\chi_{B_{\sigma}(x_0)}(y) \quad && \text{for $y \in B_1(x_0)$}\\
		G^{\sigma}(y, x_0)&=0 \quad && \text{for $y \in \partial B_1(x_0)$}.
	\end{aligned}\right.
\end{equation*}
Since the forcing term $f_{\sigma}(\cdot, x_0) \in L^{\infty}(B_1)$, the $L^p$-theory yields that $G^{\sigma}(\cdot, x_0) \in W^{2, p}_{\text{loc}}(B_1)$ for any $p \in (1, \infty)$. Here we understand $G^{\sigma}$ as a strong solution of the Dirichlet problem.

\begin{lemma}[{\cite[Section 2.1]{HK20}}]\label{lem-green1}
	Suppose that $\Omega$ is a bounded $C^{1, 1}$-domain in $\mathbb{R}^n$ with $n \geq 3$. Assume that the coefficient $A=(a_{ij})$ is uniformly elliptic and of Dini mean oscillation in $\Omega$. Then the following hold:
	\begin{enumerate}[(i)]
		\item (Convergence I) $G^{\sigma}(\cdot, x_0) \rightharpoonup G(\cdot, x_0)$ weakly in $W^{2, 2}(\Omega \setminus B_{r}(x_0))$ up to subsequence, for any $r>0$.
		
		\item (Convergence II) $G^{\sigma}(\cdot, x_0) \to G(\cdot, x_0)$ uniformly on $\Omega \setminus B_{r}(x_0)$ up to  subsequence, for any $r>0$.
		
		\item (Upper bound) Let $x, y \in \Omega$ with $x \neq y$. Then
		\begin{equation*}
			G^{\sigma}(x, y) \leq C|x-y|^{2-n} \quad \text{for any $\sigma \in (0, |x-y|/3)$.}
		\end{equation*}
	In particular, $G(x, y) \leq C|x-y|^{2-n}$ for $C=C(n, \lambda, \Lambda, \Omega, \omega_{A})>0$, where $\omega_A$ denotes the mean oscillation function of $A=(a_{ij})$.
	\end{enumerate}
\end{lemma}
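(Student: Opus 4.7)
I would establish the pointwise bound (iii) first, since it furnishes the uniform control needed to extract a limit in (i) and (ii). Once (iii) is in hand, the two convergence statements follow from interior non-divergence regularity and weak/strong compactness, essentially by a standard scheme for Green's functions.

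\textbf{Step 1: the upper bound (iii).} The strategy is a freezing coefficient plus barrier argument. Let $L_0 := a_{ij}(x_0) D_{ij}$ be the operator obtained by freezing the coefficients at $x_0$, and let $\Phi_{x_0}$ be its fundamental solution, which is explicit and satisfies $\Phi_{x_0}(z) \asymp |z|^{2-n}$ with constants depending only on $\lambda, \Lambda$. A direct computation gives
\[
L\bigl(C_1 \Phi_{x_0}(\cdot - x_0)\bigr)(x) = C_1\bigl(a_{ij}(x)-a_{ij}(x_0)\bigr) D_{ij}\Phi_{x_0}(x - x_0),
\]
whose right-hand side scales like $|a(x)-a(x_0)| \, |x-x_0|^{-n}$. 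The Dini mean oscillation hypothesis controls this perturbation on the annular scale: a dyadic decomposition together with the $L^n$-type estimate yields a bound of order $\int_0^{\mathrm{diam}\,\Omega} \omega_A(t)/t \, dt < \infty$. Introduce a corrector $\eta$ solving $L\eta = -L(C_1\Phi_{x_0})$ in $B_R(x_0)\cap\Omega$ with zero boundary data on $\partial(B_R(x_0)\cap\Omega)$; by the ABP estimate, $\|\eta\|_\infty \leq C\|L(C_1\Phi_{x_0})\|_{L^n}$, which is controlled by the Dini integral. Choosing $C_1$ large enough so that $C_1 \Phi_{x_0}(x - x_0) + \eta(x)$ dominates $G^\sigma(\cdot,x_0)$ on $\partial B_1(x_0)$ and on $\partial B_\sigma(x_0)$ (using the $L^\infty$ estimate of $G^\sigma$ near the regular boundary and the explicit growth of $\Phi_{x_0}$ at the origin), the maximum principle for $L$ on $B_1(x_0)\setminus B_\sigma(x_0)$ delivers
\[
G^\sigma(x,x_0) \leq C|x-x_0|^{2-n} \quad \text{for all } \sigma \in (0,|x-x_0|/3),
\]
with $C$ independent of $\sigma$.

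\textbf{Step 2: convergences (i) and (ii).} Fix $r > 0$ and restrict attention to $\Omega_r := \Omega \setminus B_r(x_0)$. For $\sigma < r/2$, the function $G^\sigma(\cdot, x_0)$ is a strong solution of $L G^\sigma = 0$ in $\Omega_{r/2}$, and by (iii) it is uniformly bounded in $L^\infty(\Omega_{r/2})$. Interior $W^{2,p}$ estimates for $L$ (valid since $a_{ij}$ is of Dini mean oscillation, hence continuous, cf. the calderón--Zygmund theory invoked in \Cref{rmk-green}) together with boundary $W^{2,p}$ estimates on the $C^{1,1}$ portion of $\partial\Omega$ yield a uniform $W^{2,p}(\Omega_r)$ bound for any $p \in (1,\infty)$. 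Choosing $p = 2$ gives a weakly convergent subsequence in $W^{2,2}(\Omega_r)$, whose limit is identified with $G(\cdot, x_0)$ by testing against arbitrary $f \in L^q(\Omega)$, using the characterization from \Cref{rmk-green} and the $L^p$-$L^q$ duality; this gives (i). For (ii), choose $p > n/2$, so that the compact embedding $W^{2,p}(\Omega_r) \hookrightarrow C(\overline{\Omega_r})$ upgrades the weak convergence to uniform convergence on $\Omega_r$. Taking a diagonal subsequence along a sequence of radii $r_k \downarrow 0$ yields both conclusions for every fixed $r > 0$.

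\textbf{Main obstacle.} The delicate point is Step 1: the Hessian of the barrier $\Phi_{x_0}$ is singular of order $|x-x_0|^{-n}$, which is borderline non-integrable, so the error $(a_{ij}(x) - a_{ij}(x_0)) D_{ij}\Phi_{x_0}$ cannot be absorbed by mere continuity of $a_{ij}$. It is precisely the Dini mean oscillation hypothesis, invoked at the level of dyadic annular averages rather than pointwise values, that makes the associated corrector $\eta$ bounded uniformly in $\sigma$. All remaining ingredients, namely interior $W^{2,p}$ regularity and weak compactness, are then standard.
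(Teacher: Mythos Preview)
The paper does not prove this lemma; it is quoted verbatim from \cite[Section~2.1]{HK20} and no argument is supplied, so there is no in-paper proof to compare your sketch against.

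On the merits of your outline: Step~2 is correct and standard. Once (iii) is available, interior and boundary $W^{2,p}$ estimates (valid under the Dini mean oscillation hypothesis) together with compact Sobolev embeddings deliver both convergence statements exactly as you describe.

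Step~1, however, has a genuine gap. The ABP estimate you invoke requires $\|L(C_1\Phi_{x_0})\|_{L^n}$ to be finite, and it is not. Since $|D^2\Phi_{x_0}(z)|\asymp |z|^{-n}$, in polar coordinates
\[
\bigl\|(a_{ij}(\cdot)-a_{ij}(x_0))\,D_{ij}\Phi_{x_0}(\cdot - x_0)\bigr\|_{L^n(B_R(x_0))}^n \;\asymp\; \int_0^R \phi(r)^n\, r^{-n^2+n-1}\,dr,
\]
where $\phi$ is the pointwise modulus of $a_{ij}$. For $n\geq 3$ the exponent $-n^2+n-1\leq -7$, so this diverges unless $\phi(r)=O(r^{n-1})$, which is far stronger than any Dini condition and stronger even than Lipschitz. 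No dyadic reorganisation rescues this: the $L^n$ singularity of $|z|^{-n}$ at the origin is critical and cannot be absorbed by a modulus of continuity on the coefficients. There is also a more basic mismatch: Dini \emph{mean} oscillation controls only integral averages of $a_{ij}-a_{ij}(x_0)$, not pointwise differences, so bounding $L(C_1\Phi_{x_0})$ pointwise by $\phi(|x-x_0|)\,|x-x_0|^{-n}$ already assumes more than the hypothesis gives.

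The route taken in \cite{HK20}, in line with the non-divergence Green's function literature going back to \cite{Bau84}, avoids barriers. One first obtains a uniform weak-type bound on $G^\sigma(\cdot,x_0)$ independent of $\sigma$ (via the Krylov--Safonov weak Harnack inequality for nonnegative supersolutions, using that $\|f_\sigma\|_{L^1}=1$), and then upgrades to the pointwise estimate by applying the local maximum principle on $B_{|x-x_0|/3}(x)$, where $G^\sigma(\cdot,x_0)$ is $L$-harmonic whenever $\sigma<|x-x_0|/3$.
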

It immediately follows from \Cref{lem-green1} that
\begin{equation*}
	\left\{
	\begin{aligned}
		a_{ij}(y)D_{ij}G(y, x_0)&=0 \quad && \text{for $y \in B_1(x_0) \setminus \{x_0\}$}\\
		G(y, x_0)&=0 \quad && \text{for $y \in \partial B_1(x_0)$}.
	\end{aligned}\right.
\end{equation*}
Moreover, by employing the perturbation theory with respect to the constant coefficient operator, we can investigate the asymptotic behavior of $G(x, x_0)$ near $x_0$. Indeed, we are going to utilize this `almost' homogeneous property to replace the homogeneity.
\begin{lemma}[{\cite[Theorem 1.11]{DKL23}}]\label{lem-green2}
	Under the same hypothesis of \Cref{lem-green1}, we have
	\begin{equation*}
		\begin{aligned}
		&\lim_{x \to x_0} |x-x_0|^{n-2}|G(x, x_0)-G_{x_0}(x, x_0)|=0,\\
		&\lim_{x \to x_0} |x-x_0|^{n-1}|D_xG(x, x_0)-D_xG_{x_0}(x, x_0)|=0,\\
		&\lim_{x \to x_0} |x-x_0|^{n}|D^2_xG(x, x_0)-D^2_xG_{x_0}(x, x_0)|=0,
		\end{aligned}
	\end{equation*}
for any $x_0 \in \Omega$.
\end{lemma}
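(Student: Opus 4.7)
The proof hinges on a freezing coefficient perturbation. Let $G_{x_0}(\cdot, x_0)$ denote the Green's function of the constant coefficient operator $L_{x_0} := a_{ij}(x_0) D_{ij}$, whose fundamental solution is explicit, and set
$$H(x) := G(x, x_0) - G_{x_0}(x, x_0), \qquad x \in B_1(x_0) \setminus \{x_0\}.$$
Since $L_{x_0} G_{x_0}(\cdot, x_0) = 0$ away from $x_0$, subtracting the defining equations yields
$$L H(x) = -\bigl(a_{ij}(x) - a_{ij}(x_0)\bigr) D_{ij} G_{x_0}(x, x_0) \qquad \text{in } B_1(x_0) \setminus \{x_0\},$$
with $H$ vanishing on $\partial B_1(x_0)$. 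Both Green's functions are driven by the same unit point source at $x_0$, so the three claims amount to asserting that $H$ is in fact $C^2$ at $x_0$, with decay rates strictly better than those of $G$ or $G_{x_0}$ individually.

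My first step is to establish the top-order limit $|H(y)| = o(|y - x_0|^{2-n})$ directly from the Green's function representation
$$H(y) = \int G(y, z)\,\bigl(a_{ij}(z) - a_{ij}(x_0)\bigr)\, D_{ij} G_{x_0}(z, x_0)\, dz,$$
exploiting the sharp bound $|D^2 G_{x_0}(x, x_0)| \leq C|x - x_0|^{-n}$ from the explicit fundamental solution together with the pointwise upper bound $|G(y, z)| \leq C|y - z|^{2-n}$ from \Cref{lem-green1}(iii). Splitting the integral into dyadic annuli around $x_0$ and exploiting the Dini summability $\int_0^1 \omega_A(t)/t\,dt < \infty$ yields precisely the $o$-rate. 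For the derivative limits I would rescale: set $H_r(y) := r^{n-2} H(x_0 + r y)$ on the annulus $A := B_2 \setminus B_{1/2}$. A direct computation shows that $H_r$ satisfies a uniformly elliptic equation $\tilde a_{ij}(y) D_{ij} H_r(y) = f_r(y)$ with $\tilde a_{ij}(y) := a_{ij}(x_0 + r y)$ preserving the Dini modulus $\omega_A$, and $\|H_r\|_{L^\infty(A)} + \|f_r\|_{L^\infty(A)} \to 0$ as $r \to 0^+$ (the first by the top-order limit, the second by uniform continuity of $a_{ij}$ from \Cref{rmk-dini}). Interior $W^{2,p}$ estimates together with the pointwise $C^2$ regularity theory for non-divergence operators with Dini mean oscillation coefficients \cite{DK17, DEK18} then give
$$\|D H_r\|_{L^\infty(B_1 \setminus B_{1/2})} + \|D^2 H_r\|_{L^\infty(B_1 \setminus B_{1/2})} \longrightarrow 0 \quad \text{as } r \to 0^+,$$
and unwinding the rescaling delivers the remaining two limits.

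The main obstacle is the $D^2 H$ estimate. The first derivative bound follows essentially from $W^{2,p}$–Sobolev embedding applied to $H_r$ once $H_r$ and $f_r$ are known to be small, but the pointwise $C^2$ estimate for non-divergence operators with only Dini mean oscillation coefficients is delicate: the forcing term $f_r$ is not pointwise Dini continuous but only integrally so through $\omega_A$, and one must iterate across dyadic annuli around $x_0$ and exploit the Dini summability $\int_0^1 \omega_A(t)/t\,dt < \infty$ to upgrade averaged smallness of $f_r$ into pointwise smallness of $D^2 H_r$. It is precisely at this step that the Dini mean oscillation hypothesis on $A$ is essential; this is the technical heart of the argument and is the content of \cite[Theorem 1.11]{DKL23}.
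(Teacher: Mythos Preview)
The paper does not prove this lemma; it is quoted verbatim as \cite[Theorem 1.11]{DKL23} and used as a black box, so there is no ``paper's own proof'' to compare against. Your sketch is a plausible outline of the argument in \cite{DKL23} (freezing coefficients, representing $H$ via the Green's function, rescaling to an annulus, and invoking the pointwise $C^2$ theory of \cite{DK17,DEK18} under Dini mean oscillation), and you yourself correctly identify the last step as the technical heart that \emph{is} \cite[Theorem 1.11]{DKL23}; in that sense your proposal is circular as a standalone proof but accurate as a summary of the cited result.
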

We note that the Green's function $G_{x_0}$ for a constant coefficient operator $L_{x_0}=a_{ij}(x_0)D_{ij}$ satisfies
\begin{equation}\label{eq-green-const}
	c_1 |x-x_0|^{2-n} \leq G_{x_0}(x, x_0)\leq c_2 |x-x_0|^{2-n}.
\end{equation}
Therefore, by combining \Cref{lem-green2} and \eqref{eq-green-const}, we obtain
\begin{equation}\label{eq-almost}
	\frac{c_1}{2}|x-x_0|^{2-n} \leq G(x, x_0) \leq 2c_2|x-x_0|^{2-n}, 
\end{equation}
provided that $|x-x_0|$ is small enough. We can understand this property as the \textit{almost homogeneity} of $G$.

We finish this section by developing uniform estimates for solutions of \eqref{eq-dirichletproblem} for the later use in \Cref{sec-correctors}. Let $u$ be the unique solution of \eqref{eq-dirichletproblem}, where $f \in L^p(\Omega)$ for $1<p<\infty$. Then by the standard $L^p$-theory, one can obtain a uniform estimate
\begin{equation*}
    \|u\|_{W^{2, p}(\Omega)} \leq C \|f\|_{L^p(\Omega)}.
\end{equation*}
Furthermore, we can prove the uniform estimate in terms of $\|f\|_{L^1(\Omega)}$ as follows.
\begin{theorem}\label{thm-L1data}
	Let $u$ be the solution of \eqref{eq-dirichletproblem}, where $f \in L^p(\Omega)$ for $p>n$. Then we have the following estimate
	\begin{equation*}
		\|u\|_{W^{1, q}(\Omega)} \leq C\|f\|_{L^1(\Omega)}
	\end{equation*}
 for any $1\leq q<n/(n-1)$.
\end{theorem}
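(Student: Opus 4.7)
The plan is a Green's-function duality argument. By \Cref{rmk-green}, any strong solution $u$ of \eqref{eq-dirichletproblem} admits the representation
\begin{equation*}
u(y)=\int_{\Omega} G(y,x)\,f(x)\,dx \qquad \text{for every } y\in\Omega,
\end{equation*}
so the task reduces to controlling $G(y,\cdot)$ and $D_y G(y,\cdot)$ in suitable Lebesgue (and weak-Lebesgue) spaces.

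\textbf{Kernel bounds.} The size estimate $G(y,x)\le C|y-x|^{2-n}$ is already \Cref{lem-green1}(iii). For the first derivative I would establish
\begin{equation*}
|D_y G(y,x)|\le C|y-x|^{1-n}\qquad \text{for } x,y\in\Omega,\ x\neq y.
\end{equation*}
When $r:=|y-x|$ is not too small, $G(\cdot,x)$ is $L$-harmonic on $B_{r/2}(y)$ and satisfies $|G(\cdot,x)|\le C r^{2-n}$ there; the interior $C^{1}$ estimate for linear non-divergence equations with Dini-mean-oscillation coefficients (recalled in \Cref{rmk-dini}, from \cite{DK17,DEK18}) combined with a standard rescaling to the unit ball yields the claimed $r^{1-n}$ blow-up. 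Near the pole, the almost-homogeneity of \Cref{lem-green2}, together with the explicit gradient behavior of the frozen-coefficient Green's function $G_{x_0}$ underlying \eqref{eq-green-const}, transfers the same bound to $G$ itself. Boundary behavior of $y\mapsto G(y,x)$ causes no difficulty since the singularity lies in the interior and $\Omega$ is $C^{1,1}$.

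\textbf{Riesz-potential step.} Once the kernel bounds are established (first for the approximants $G^{\sigma}$, then passed to the limit via \Cref{lem-green1}(i)--(ii) and dominated convergence), differentiating the representation under the integral sign gives
\begin{equation*}
|u(y)|\le C\,I_2(|f|)(y),\qquad |Du(y)|\le C\,I_1(|f|)(y),
\end{equation*}
where $I_{\alpha}h(y):=\int_{\Omega}|y-x|^{\alpha-n}|h(x)|\,dx$ denotes the Riesz potential. The weak-type endpoint of the Hardy--Littlewood--Sobolev inequality states $I_{\alpha}:L^1\to L^{n/(n-\alpha),\infty}$, and on the bounded domain $\Omega$ this weak space embeds into $L^q$ for every $q<n/(n-\alpha)$. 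Applying this with $\alpha=2$ to $u$ and $\alpha=1$ to $Du$, the binding exponent is $q<n/(n-1)$ coming from the gradient bound, and one concludes $\|u\|_{W^{1,q}(\Omega)}\le C\|f\|_{L^1(\Omega)}$.

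\textbf{Main obstacle.} The genuinely delicate step is the pointwise gradient bound on $G$: only the size estimate is explicit in \Cref{lem-green1}, so $|D_yG|\lesssim|y-x|^{1-n}$ must be bootstrapped from the Dini $C^{1}$ theory in the regular regime and from the almost-homogeneity of \Cref{lem-green2} in the singular regime, with a matching scaling argument joining the two. Once that is in place, everything else is a textbook Riesz-potential computation.
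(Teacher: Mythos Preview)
Your approach is correct and matches the paper's proof: Green's-function representation, the pointwise gradient bound $|D_yG(y,x)|\le C|y-x|^{1-n}$, and a Riesz-potential estimate to pass from $L^1$ data to $W^{1,q}$ control for $q<n/(n-1)$. The only difference is bibliographic---the paper cites the gradient bound directly from \cite[Theorem~1.9]{HK20} rather than bootstrapping it from the interior $C^1$ theory and \Cref{lem-green2}, and invokes \cite[Lemma~7.12]{GT83} for the potential step in place of your weak-HLS-plus-embedding formulation.
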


\begin{proof}
	By applying \Cref{rmk-green}, we have a representation formula for $u$:
 \begin{equation*}
		u(y)=\int_{\Omega}G(y, x)f(x) \,dx,
	\end{equation*}
 where $G$ is the Green's function for $L$ in $\Omega$. Then an application of \cite[Theorem 1.9]{HK20} yields that
	\begin{equation*}
		|\nabla u(y)| \leq \int_{\Omega} |\nabla_y G(y, x)| |f(x)|\,dx \leq C\int_{\Omega} |x-y|^{1-n} |f(x)|\,dx,
	\end{equation*}
	and so by utilizing the Riesz potential estimate (see \cite[Lemma 7.12]{GT83} for instance),
	\begin{equation*}
		\|\nabla u\|_{L^q(\Omega)} \leq  C\|f\|_{L^1(\Omega)} \quad \text{for any $1 \leq q<\frac{n}{n-1}$}.
	\end{equation*}
\end{proof}

\section{Almost uniform convergence}\label{sec-limit}
The aim of this section is to prove the partial result of \Cref{thm-main} (i), i.e., the existence of a limit function $u$ up to subsequence of $\varepsilon$. We note that the consequences in this section also hold for non-critical hole sizes, since the proofs do not exploit the specific choice of $a^{\varepsilon}$.

We begin with the discrete gradient estimate, which gives a nice control for $|u_{\varepsilon}(x_1)-u_{\varepsilon}(x_2)|$ whenever $x_1-x_2 \in \varepsilon\mathbb{Z}^n$.
\begin{lemma}[Discrete gradient estimate]\label{lem-discrete}
		For given direction $e=e_l$ with $l=1, \cdots, n$, set
		\begin{align*}
			\Delta_e u_{\varepsilon}(x)=\frac{u_{\varepsilon}(x+\varepsilon e)-u_{\varepsilon}(x)}{\varepsilon}.
		\end{align*}
		Then there exists a universal constant $C>0$ which is independent of $\varepsilon>0$ such that
		\begin{align*}
			|\Delta_e u_{\varepsilon}(x)| \leq C
		\end{align*}
		for every $x \in \widetilde{\Omega}_{\varepsilon}:=\Omega_{\varepsilon} \cap (\varepsilon e+\Omega_{\varepsilon})$.
	\end{lemma}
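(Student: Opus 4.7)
My approach is to exploit the joint $\mathbb{Z}^n$-periodicity of $a_{ij}(\cdot)$ and the $\varepsilon\mathbb{Z}^n$-periodicity of the obstacle configuration in order to translate $u_\varepsilon$ by $\varepsilon e$ and compare the translate with $u_\varepsilon$ itself, using the minimality characterization of $u_\varepsilon$ as the least viscosity supersolution. Fix a direction $e = e_l$ and a constant $C > 0$ to be chosen. The plan is to verify that
\[
v(x) := u_\varepsilon(x + \varepsilon e) + C\varepsilon
\]
is a viscosity supersolution of the obstacle problem \eqref{problem-Le} on $\widetilde{\Omega}_\varepsilon$. If so, the comparison principle will imply $v \geq u_\varepsilon$ on $\widetilde{\Omega}_\varepsilon$, i.e. $u_\varepsilon(x) - u_\varepsilon(x+\varepsilon e) \leq C\varepsilon$; exchanging the roles of $x$ and $x+\varepsilon e$ (equivalently, running the same argument with $-e$) gives the matching lower bound, and together they yield $|\Delta_e u_\varepsilon(x)| \leq C$.

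The PDE side and the obstacle side are the two straightforward verifications. For the PDE, the identity $a_{ij}((x+\varepsilon e)/\varepsilon) = a_{ij}(x/\varepsilon + e) = a_{ij}(x/\varepsilon)$ (from the $\mathbb{Z}^n$-periodicity of $a_{ij}$) shows that $a_{ij}(x/\varepsilon) D_{ij} v(x) \leq 0$ in the viscosity sense on $\widetilde{\Omega}_\varepsilon$. For the obstacle, the invariance of $T_{a^\varepsilon}$ under $x \mapsto x + \varepsilon e$ means $x \in T_{a^\varepsilon}$ iff $x+\varepsilon e \in T_{a^\varepsilon}$, and hence
\[
\bigl|\varphi_\varepsilon(x+\varepsilon e) - \varphi_\varepsilon(x)\bigr| \leq \|\nabla \varphi\|_{L^\infty(\Omega)}\, \varepsilon
\]
pointwise. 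Choosing $C \geq \|\nabla\varphi\|_{L^\infty(\Omega)}$ then gives $v(x) \geq \varphi_\varepsilon(x+\varepsilon e) + C\varepsilon \geq \varphi_\varepsilon(x)$ everywhere in $\widetilde{\Omega}_\varepsilon$.

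The main obstacle is the boundary comparison $v \geq u_\varepsilon$ on $\partial \widetilde{\Omega}_\varepsilon$ needed to close the argument, since the relevant portion of this boundary lies within an $\varepsilon$-neighborhood of $\partial\Omega$ where both $u_\varepsilon(x)$ and $u_\varepsilon(x+\varepsilon e)$ vanish. Because $\partial\Omega$ is $C^{1,1}$, $u_\varepsilon = 0$ on $\partial\Omega$, and $\varphi \leq 0$ on $\partial\Omega$, I would construct an upper barrier by the standard exterior-sphere method for the uniformly elliptic operator $a_{ij}(x/\varepsilon) D_{ij}$ (the barrier depends only on $n, \lambda, \Lambda$ and the $C^{1,1}$-bound of $\partial\Omega$, hence is uniform in $\varepsilon$), obtaining a linear decay estimate $u_\varepsilon(x) \leq K\,\mathrm{dist}(x,\partial\Omega)$ near $\partial\Omega$. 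Enlarging $C$ to absorb $K$ then yields $v \geq u_\varepsilon$ on $\partial\widetilde{\Omega}_\varepsilon$. Once this is in hand, the comparison principle for the obstacle problem delivers $v \geq u_\varepsilon$ throughout $\widetilde{\Omega}_\varepsilon$, completing the proof. The boundary Lipschitz-type control is thus the only nontrivial ingredient; all other steps are immediate consequences of the periodic structure.
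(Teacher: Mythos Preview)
Your proposal is correct and is essentially the argument the paper has in mind: the paper's proof simply cites \cite[Lemma~3.1]{CL08} and \cite[Lemma~2.5]{KL11}, both of which proceed exactly by translating $u_\varepsilon$ by $\varepsilon e$, adding $C\varepsilon$, exploiting the periodicity of $a_{ij}$ and of the hole pattern to verify the supersolution and obstacle conditions, and closing with a uniform boundary Lipschitz barrier. Your identification of the boundary comparison as the only nontrivial step, and the exterior-sphere barrier depending only on $n,\lambda,\Lambda$ and the $C^{1,1}$ geometry of $\partial\Omega$, matches those references precisely.
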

	
	\begin{proof}
		Since $a_{ij}(\cdot)$ is $1$-periodic, the proof is essentially the same as the one of \cite[Lemma 3.1]{CL08} or \cite[Lemma 2.5]{KL11}.
	\end{proof}
	
We next show that the oscillation of $u_{\varepsilon}$ in each cube ``away from holes'' is sufficiently small.
\begin{lemma}[$\varepsilon$-flatness]\label{lem-cubeosc}
	Set $b^{\varepsilon}=(\varepsilon a^{\varepsilon})^{1/2}$. Then there exists a universal constant $C>0$ such that
	\begin{equation*}
		\osc_{Q_{\varepsilon}(\varepsilon k) \setminus B_{b^{\varepsilon}}(\varepsilon k)} u_{\varepsilon} \leq C({a}^{\varepsilon}/\varepsilon)^{\frac{n-2}{2}}+C\varepsilon
	\end{equation*}
	for each $k \in \mathbb{Z}^n$ satisfying $Q_{3\varepsilon}(\varepsilon k) \subset \Omega$.
\end{lemma}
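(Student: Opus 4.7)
The idea is to split the boundary $\partial A$ of the annular region $A:=Q_\varepsilon(\varepsilon k)\setminus B_{b^\varepsilon}(\varepsilon k)$ into its inner sphere $\partial B_{b^\varepsilon}(\varepsilon k)$ and outer cube boundary $\partial Q_\varepsilon(\varepsilon k)$, and to control each contribution separately: a Green's function barrier for the inner part (producing the $(a^\varepsilon/\varepsilon)^{(n-2)/2}$ term), and the discrete gradient estimate \Cref{lem-discrete} for the outer part (producing the $\varepsilon$ term). First I would observe that $u_\varepsilon$ is $L$-harmonic on $Q_{3\varepsilon}(\varepsilon k)\setminus T_{a^\varepsilon}$ (since $\varphi_\varepsilon\equiv 0$ there and the non-contact set $\{u_\varepsilon>\varphi_\varepsilon\}$ covers $Q_{3\varepsilon}(\varepsilon k)\setminus T_{a^\varepsilon}$ by the strong maximum principle), and in particular on $A$. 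The maximum principle then yields $\osc_A u_\varepsilon=\osc_{\partial A}u_\varepsilon$.

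For the inner contribution, I would construct the $L$-capacitary barrier $h$ defined as the $L$-harmonic function in $Q_\varepsilon(\varepsilon k)\setminus B_{a^\varepsilon}(\varepsilon k)$ with $h=1$ on $\partial B_{a^\varepsilon}(\varepsilon k)$ and $h=0$ on $\partial Q_\varepsilon(\varepsilon k)$. Comparing $h$ with a suitable multiple of the Green's function $G(\cdot,\varepsilon k)$, the upper bound of \Cref{lem-green1}(iii) together with the almost-homogeneity \Cref{lem-green2} and \eqref{eq-almost} (used to fix the correct multiplier at the scale $a^\varepsilon$) yields $h(x)\le C(a^\varepsilon/|x-\varepsilon k|)^{n-2}$; in particular $h\le C(a^\varepsilon/\varepsilon)^{(n-2)/2}$ on $\partial B_{b^\varepsilon}(\varepsilon k)$. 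Applying the maximum principle to $u_\varepsilon$ in $Q_\varepsilon(\varepsilon k)\setminus B_{a^\varepsilon}(\varepsilon k)$ against the harmonic envelopes built from $h$, together with the a priori bound $\|u_\varepsilon\|_{L^\infty}\le\|\varphi\|_{L^\infty}$ (from comparison with constants), then gives
\[
\osc_{\partial B_{b^\varepsilon}(\varepsilon k)}u_\varepsilon\;\le\;\osc_{\partial Q_\varepsilon(\varepsilon k)}u_\varepsilon+C(a^\varepsilon/\varepsilon)^{(n-2)/2}.
\]

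For the outer estimate $\osc_{\partial Q_\varepsilon(\varepsilon k)}u_\varepsilon\le C\varepsilon$, I would appeal to \Cref{lem-discrete}, using that $\partial Q_\varepsilon(\varepsilon k)$ lies at distance $\ge \varepsilon/4$ from every hole so $u_\varepsilon$ is $L$-harmonic in a uniform neighborhood of it. The discrete gradient directly controls oscillations of $u_\varepsilon$ between any two $\varepsilon$-lattice-related points of $Q_{3\varepsilon}(\varepsilon k)\setminus T_{a^\varepsilon}$ by $C\varepsilon$; to transfer this onto the lattice-free set $\partial Q_\varepsilon(\varepsilon k)$ itself, I would apply interior regularity to the difference function $u_\varepsilon(\cdot+\varepsilon e_i)-u_\varepsilon(\cdot)$, which is $L$-harmonic by the $1$-periodicity of $(a_{ij})$ and uniformly $O(\varepsilon)$ by \Cref{lem-discrete}, thereby upgrading the discrete estimate into a pointwise $C\varepsilon$ oscillation bound on $\partial Q_\varepsilon(\varepsilon k)$. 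Combining with the inner estimate then gives $\osc_A u_\varepsilon\le C\varepsilon+C(a^\varepsilon/\varepsilon)^{(n-2)/2}$, as claimed. The principal obstacle is this final upgrade, since $\partial Q_\varepsilon(\varepsilon k)$ contains no lattice points and the argument must carefully combine $L$-harmonicity with the coefficient periodicity, in the spirit of \cite[Lemma 3.2]{CL08}.
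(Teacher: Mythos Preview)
Your inner estimate via a Green's function barrier is essentially the same as the paper's, and the overall logic of reducing to boundary control via the maximum principle is fine. The genuine gap is in your ``outer'' step: the claim $\osc_{\partial Q_\varepsilon(\varepsilon k)}u_\varepsilon\le C\varepsilon$ does not follow from the argument you sketch. The discrete gradient estimate \Cref{lem-discrete} bounds only lattice-shifted differences $|u_\varepsilon(x+\varepsilon e_i)-u_\varepsilon(x)|$; two generic points on a single face of $\partial Q_\varepsilon(\varepsilon k)$ are \emph{not} related by an $\varepsilon\mathbb{Z}^n$ translation, so this says nothing about their difference. Your proposed fix---interior regularity applied to the $L$-harmonic difference $V_i(x):=u_\varepsilon(x+\varepsilon e_i)-u_\varepsilon(x)$---yields $C^\alpha$ control of $V_i$, but that is control of the \emph{variation of the difference quotient}, not of $u_\varepsilon$ itself, and does not upgrade to an oscillation bound for $u_\varepsilon$ on $\partial Q_\varepsilon$. (In \cite{CL08} the operator is the Laplacian, so arbitrary translates of $u_\varepsilon$ are again solutions and one can extract a genuine Lipschitz bound; this is exactly what fails for variable $1$-periodic coefficients.)

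The paper sidesteps this entirely. It never estimates $\osc_{\partial Q_\varepsilon}u_\varepsilon$; instead it sets $m_\varepsilon:=\min u_\varepsilon$ over the annular region $Q_{3\varepsilon}\cap(T_{\varepsilon/2}\setminus T_{\varepsilon/100})$, uses \Cref{lem-discrete} to transport the minimizing point into the central annulus (producing a single point $x^\varepsilon\in B_{\varepsilon/2}\setminus B_{\varepsilon/100}$ with $u_\varepsilon(x^\varepsilon)\le m_\varepsilon+C\varepsilon$), and then observes via the minimum principle that $v_\varepsilon:=u_\varepsilon-m_\varepsilon\ge0$ on $Q_\varepsilon$. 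The key step you are missing is the \emph{Harnack inequality} applied to the nonnegative solution $v_\varepsilon$ on the sphere $\partial B_{r_\varepsilon}$ through $x^\varepsilon$: this converts the single-point smallness $v_\varepsilon(x^\varepsilon)\le C\varepsilon$ into $v_\varepsilon\le C\varepsilon$ on the whole sphere. From there the Green's function barrier pushes inward to $\partial B_{b^\varepsilon}$, and Harnack plus covering pushes outward to the rest of $Q_\varepsilon\setminus B_{b^\varepsilon}$. In short: the discrete gradient is used only to produce one good point, and it is Harnack for $u_\varepsilon-m_\varepsilon$ (not regularity of a difference function) that spreads the estimate over a continuous set.
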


\begin{proof}
	Without loss of generality, we may assume $k=0$.
	Let 
	\begin{equation*}
		m_{\varepsilon}:=\min_{Q_{3\varepsilon} \cap (T_{\varepsilon/2} \setminus T_{\varepsilon/100})}u_{\varepsilon} \quad \text{for $T_{r}:=\bigcup_{k \in \mathbb{Z}^n} B_r(\varepsilon k)$,}
	\end{equation*}
	and choose $x_0^{\varepsilon} \in Q_{3\varepsilon} \cap (T_{\varepsilon/2}\setminus T_{\varepsilon/100})$ such that $u_{\varepsilon}(x_0^{\varepsilon})=m_{\varepsilon}$. Then by applying \Cref{lem-discrete} if necessary, there exists a point $x^{\varepsilon} \in Q_{\varepsilon} \cap (B_{\varepsilon/2}\setminus B_{\varepsilon/100})$ such that
	\begin{equation*}
		m_{\varepsilon} \leq u_{\varepsilon}(x^{\varepsilon}) \leq m_{\varepsilon}+C\varepsilon.
	\end{equation*}
	We note that $\varepsilon/100 \leq r_{\varepsilon}:=|x^{\varepsilon}| \leq \varepsilon/2$.
	
	We now claim that 
	\begin{equation}\label{eq-claim-lower}
		u_{\varepsilon} \geq m_{\varepsilon}\quad \text{in $Q_{\varepsilon}$}.
	\end{equation}
	\begin{enumerate}[(i)]
		\item For $x \in  Q_{\varepsilon} \cap (B_{\varepsilon/2}\setminus B_{\varepsilon/100})$, it holds by  the definition of $m_{\varepsilon}$.
				
		\item For $x \in Q_{\varepsilon} \setminus B_{\varepsilon/2}$ or $x \in B_{\varepsilon/100}$, it follows from the minimum principle.
	\end{enumerate}
	
	Thus, the desired lower bound \eqref{eq-claim-lower} follows, and we define $v_{\varepsilon}:=u_{\varepsilon}-m_{\varepsilon}$ which is nonnegative in the neighborhood of $Q_{\varepsilon} \setminus B_{b^{\varepsilon}}$. Therefore, we are now able to apply Harnack's inequality on the circle $\partial B_{r_{\varepsilon}}$ (see \cite[Theorem 4.3]{CC95} for instance): 
	\begin{equation*}
		0 \leq v_{\varepsilon}(x) \leq Cv_{\varepsilon}(x^{\varepsilon}) \leq C\varepsilon \quad \text{for any $x\in \partial B_{r_{\varepsilon}}$}.
	\end{equation*}    
    We next let $G$ be the Green's function for $L$ in $B_{1/2}$.  Then in view of \Cref{lem-green1} and \Cref{lem-green2}, we consider an auxiliary function
	\begin{equation*}
		g_{\varepsilon}(x):=\sup_{\Omega} \varphi\cdot\frac{G(x, 0)-\min_{\partial B_{r_{\varepsilon}}}G(\cdot, 0)}{\min_{\partial B_{a^{\varepsilon}}}G(\cdot, 0)-\min_{\partial B_{r_{\varepsilon}}}G(\cdot, 0)}+C\varepsilon,
	\end{equation*}
	which satisfies $a_{ij}D_{ij}g_{\varepsilon}=0$ in $B_{r_{\varepsilon}}\setminus B_{a^{\varepsilon}}$ and $g_{\varepsilon} \geq v_{\varepsilon}$ on $\partial (B_{r_{\varepsilon}}\setminus B_{a^{\varepsilon}})$. Therefore, by the comparison principle, we obtain that for $x \in B_{r_{\varepsilon}}\setminus B_{b^{\varepsilon}}$,
	\begin{equation*}
		v_{\varepsilon} \leq g_{\varepsilon} \leq C(a^{\varepsilon})^{n-2} (b^{\varepsilon})^{2-n}+C\varepsilon \leq C({a}^{\varepsilon}/\varepsilon)^{\frac{n-2}{2}}+C\varepsilon.
	\end{equation*}
	Since $r_{\varepsilon} \in [\varepsilon/100, \varepsilon/2]$ and $v_{\varepsilon}$ is nonnegative, we employ the standard covering argument together with Harnack's inequality and the maximum principle (for finitely many times) to conclude the desired oscillation control in $Q_{\varepsilon} \setminus B_{b^{\varepsilon}}$.
\end{proof}	
	
We also need the following version of the Arzela-Ascoli theorem, whose proof is a simple modification of the original one. In other words, the equicontinuity assumption in the Arzela-Ascoli theorem can be relaxed to ``almost equicontinuity''.
\begin{lemma}[Arzela-Ascoli theorem] \label{lem-arzelaascoli}
	Suppose that a sequence of functions $\{f_m\}_{m \in \mathbb{N}}$ defined on $A (\subset \mathbb{R}^n)$ satisfies 
	\begin{enumerate}[(i)]
		\item(Uniformly bounded) There exists a constant $M>0$ such that $$|f_m(x)| \leq M,$$
		for any $m \in \mathbb{N}$ and $x \in A$.
		\item(Almost equicontinuous) There exist a constant $\alpha \in (0,1]$, $C>0$ and a function $g: \mathbb{N} \to \mathbb{R}_{\geq 0}$ such that $\lim_{m \to \infty}g(m)=0$ and
		\begin{align*}
			|f_m(x_1)-f_m(x_2)| \leq C|x_1-x_2|^{\alpha}+g(m),
		\end{align*}
		for any $x_1, x_2 \in A$.
	\end{enumerate}
	Then there exists a subsequence $\{f_{m_k}\}_{k \in \mathbb{N}}$ which converges uniformly on $A$. Moreover, if we denote the limit function by $f$, then $f \in C^{0, \alpha}(A)$.
\end{lemma}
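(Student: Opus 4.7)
The approach is to mimic the classical proof of the Arzel\`a--Ascoli theorem, absorbing the extra error term $g(m)$ at each step where equicontinuity is invoked. First I would fix a countable dense subset $D \subset A$, which exists since any subset of $\mathbb{R}^n$ is separable. Using the uniform bound $|f_m| \leq M$ together with the Bolzano--Weierstrass theorem and a Cantor diagonal argument, one extracts a subsequence, still denoted $\{f_{m_k}\}$, that converges pointwise on $D$.

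Next I would upgrade pointwise convergence on $D$ to uniform convergence on $A$ through a Cauchy-in-sup-norm argument. The intended applications have $A$ compact (e.g.\ $A = \overline{\Omega}$), so given $\eta>0$ one may fix $\delta>0$ with $C\delta^{\alpha}<\eta/4$ and cover $A$ by finitely many balls of radius $\delta$ centered at points $y_1, \dots, y_N \in D$. For arbitrary $x \in A$, choose a nearest $y_i$; the almost-equicontinuity hypothesis yields
\begin{equation*}
|f_{m_k}(x) - f_{m_j}(x)| \leq 2C|x - y_i|^{\alpha} + g(m_k) + g(m_j) + |f_{m_k}(y_i) - f_{m_j}(y_i)|.
\end{equation*}
Since $g(m) \to 0$ and $\{f_{m_k}(y_i)\}$ is Cauchy for each $i=1,\dots,N$, by taking $k,j$ large enough the right-hand side falls below $\eta$ uniformly in $x$. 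Hence the subsequence is uniformly Cauchy on $A$ and converges uniformly to some function $f$.

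Finally, passing to the limit $k \to \infty$ in the almost-equicontinuity estimate
\begin{equation*}
|f_{m_k}(x_1) - f_{m_k}(x_2)| \leq C|x_1 - x_2|^{\alpha} + g(m_k)
\end{equation*}
gives $|f(x_1) - f(x_2)| \leq C|x_1 - x_2|^{\alpha}$, so $f \in C^{0,\alpha}(A)$. The only substantive point is the bookkeeping of the vanishing perturbation $g(m)$; apart from this, the argument is the classical Arzel\`a--Ascoli proof, since $g(m)$ acts as a uniform-in-$x$ error that disappears in the limit.
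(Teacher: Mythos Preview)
Your proposal is correct and is precisely the ``simple modification of the original'' Arzel\`a--Ascoli proof that the paper alludes to; the paper does not spell out the details but only remarks that the equicontinuity hypothesis can be relaxed to almost equicontinuity. Your observation that one needs $A$ to be (pre)compact to run the finite-cover step is well taken---the paper only applies the lemma on the bounded domain $\Omega$, so this is implicit in the intended use.
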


We are now ready to describe a limit $u$ of $u_{\varepsilon}$ up to subsequence. We recall from \Cref{sec-introduction} that
	\begin{equation*}
		\underline{u}_{\varepsilon}(x)=
		\begin{cases}
			u_{\varepsilon}(x) & \text{if $x \in Q_{\varepsilon}(\varepsilon k) \setminus B_{b^{\varepsilon}}(\varepsilon k)$ for some $k \in \mathbb{Z}^n$}\\
			\min_{\partial B_{b^{\varepsilon}}(\varepsilon k)} u_{\varepsilon} & \text{if $x \in B_{b^{\varepsilon}}(\varepsilon k)$ for some $k \in \mathbb{Z}^n$}.
	\end{cases}
	\end{equation*}
 
\begin{theorem}[Convergence up to subsequence]\label{thm-convergence}
    Suppose that $\varepsilon_m \to 0$ as $m \to \infty$. Then there exists a function $u \in C^{0, 1}(\overline{\Omega})$ such that $\underline{u}_{\varepsilon_m} \to u$ uniformly in $\Omega$ up to subsequence. 
\end{theorem}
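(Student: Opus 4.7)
The plan is to verify the hypotheses of \Cref{lem-arzelaascoli} for the family $\{\underline{u}_{\varepsilon_m}\}$. Uniform boundedness $0 \leq u_{\varepsilon} \leq \sup_{\Omega}\varphi_{+}$ follows from the maximum principle for linear supersolutions (lower bound) and comparison against the explicit supersolution that vanishes on $\partial\Omega$, equals $\sup_{\Omega}\varphi_{+}$ on $T_{a^{\varepsilon}}$, and solves $L v=0$ elsewhere (upper bound).

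The heart of the argument is the almost-Lipschitz estimate
\begin{equation*}
    |\underline{u}_{\varepsilon}(x_1) - \underline{u}_{\varepsilon}(x_2)| \leq C|x_1 - x_2| + \omega(\varepsilon), \qquad \omega(\varepsilon) \to 0.
\end{equation*}
First I would reduce to the case where each $x_i$ lies in the annular region $Q_{\varepsilon}(\varepsilon k_i) \setminus B_{b^{\varepsilon}}(\varepsilon k_i)$: for $x_i$ inside a hole, $\underline{u}_{\varepsilon}(x_i)$ is attained at some point of $\partial B_{b^{\varepsilon}}(\varepsilon k_i)$, so replacing $x_i$ by that point incurs an error of at most $2b^{\varepsilon} = o(1)$ in the displacement. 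If $k_1 = k_2$, \Cref{lem-cubeosc} concludes. Otherwise, set $x_2' := x_2 - \varepsilon(k_2 - k_1)$; by the $\varepsilon\mathbb{Z}^n$-periodicity of the hole configuration, $x_2'$ sits in the analogous annulus around $\varepsilon k_1$, and iterating \Cref{lem-discrete} along a chain of at most $n|k_2 - k_1|$ unit-coordinate translations yields
\begin{equation*}
    |u_{\varepsilon}(x_2) - u_{\varepsilon}(x_2')| \leq C\varepsilon |k_2 - k_1| \leq C|x_1 - x_2| + C\varepsilon.
\end{equation*}
The remaining difference $|u_{\varepsilon}(x_1) - u_{\varepsilon}(x_2')|$ is controlled by \Cref{lem-cubeosc} inside $Q_{\varepsilon}(\varepsilon k_1)$, producing the claim with $\omega(\varepsilon) = C(\varepsilon + (a^{\varepsilon}/\varepsilon)^{(n-2)/2} + b^{\varepsilon})$.

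Near $\partial\Omega$ the chain may exit $\Omega$, and \Cref{lem-cubeosc} requires $Q_{3\varepsilon}(\varepsilon k) \subset \Omega$, so a boundary strip must be handled separately. Since $\varphi \leq 0$ on $\partial\Omega$ and $\varphi$ is smooth, there exists a fixed $\delta > 0$, independent of $\varepsilon$, such that $\varphi < 0$ on $\{x \in \Omega : \operatorname{dist}(x, \partial\Omega) < \delta\}$; the obstacle is then inactive in this strip and $u_{\varepsilon}$ solves $a_{ij}(x/\varepsilon)D_{ij}u_{\varepsilon} = 0$ there with zero boundary data. Standard barriers built from the fundamental solution of a frozen constant-coefficient operator, combined with the $C^{1,1}$-regularity of $\partial\Omega$, then yield $u_{\varepsilon}(x) \leq C\operatorname{dist}(x, \partial\Omega)$ uniformly in $\varepsilon$, which patches with the interior chaining to give the almost-Lipschitz bound throughout $\Omega$. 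An application of \Cref{lem-arzelaascoli} supplies a uniformly convergent subsequence $\underline{u}_{\varepsilon_m} \to u$ with $u \in C^{0,1}(\overline{\Omega})$. The main technical subtlety is precisely the interaction of the discrete translation chain with $\partial\Omega$, which is why the boundary inactivity of the obstacle is essential to closing the argument.
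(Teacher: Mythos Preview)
Your proposal follows exactly the paper's route: combine \Cref{lem-discrete} and \Cref{lem-cubeosc} to obtain an almost-Lipschitz estimate for $\underline{u}_{\varepsilon}$, then apply \Cref{lem-arzelaascoli}. The paper's own proof is a two-line sketch of precisely this, so the chaining argument and the boundary discussion you supply are details the paper leaves implicit rather than a different approach.

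One small gap: the assertion that $\varphi<0$ on a fixed boundary strip does not follow from $\varphi\le 0$ on $\partial\Omega$; $\varphi$ may vanish at boundary points and be positive arbitrarily close to $\partial\Omega$, so the obstacle need not be inactive there. The boundary control you need can be obtained without this claim. Let $h_\varepsilon$ solve $a_{ij}(x/\varepsilon)D_{ij}h_\varepsilon=-C_0$ in $\Omega$, $h_\varepsilon=0$ on $\partial\Omega$, with $C_0$ chosen (via ABP, independently of $\varepsilon$) so that $h_\varepsilon\ge\sup_\Omega\varphi_+$. Then $h_\varepsilon$ is an admissible supersolution of \eqref{problem-Le}, whence $0\le u_\varepsilon\le h_\varepsilon$, and the uniform boundary Lipschitz estimate for $h_\varepsilon$ on a $C^{1,1}$ domain gives $0\le u_\varepsilon(x)\le C\operatorname{dist}(x,\partial\Omega)$. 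This is exactly the bound your patching step uses, so the rest of your argument goes through unchanged.
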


\begin{proof}
A combination of \Cref{lem-discrete} and \Cref{lem-cubeosc} yields that
\begin{equation*}
	|\underline{u}_{\varepsilon_m}(x_1)-\underline{u}_{\varepsilon_m}(x_2)| \leq C(|x_1-x_2|+\varepsilon_m+(a^{\varepsilon_m}/\varepsilon_m)^{\frac{n-2}{2}})
\end{equation*}
for any $x_1, x_2 \in \Omega$. Therefore, by applying \Cref{lem-arzelaascoli}, there exists a continuous function $u$ such that $\underline{u}_{\varepsilon_m} \to u$ uniformly in $\Omega$ up to subsequence.
\end{proof}
As mentioned in \Cref{sec-introduction}, we are going to characterize the effective equation satisfied by $u$ and to verify the uniform convergence of $\underline{u}_{\varepsilon}$ to $u$ in the full sequence in \Cref{sec-homogenization}.

\section{Correctors and critical values}\label{sec-correctors} 
    As mentioned in \Cref{sec-introduction}, we summarize the known results concerning $w_{1, D^2P}$, which corrects the oscillation of coefficient matrix $a_{ij}$. The following lemmas were shown in \cite{Eva89, Eva92}; see also \cite{KL16}.
\begin{lemma}\label{lem:periodic}
	For $M \in \mathcal{S}^n :=\{\text{$n \times n$ symmetric matrices} \}$, there exists a unique $\kappa \in \mathbb{R}$ for which the following equation admits a $1$-periodic solution 
	\begin{equation} \label{periodic}
		a_{ij}D_{y_iy_j}w+a_{ij}M_{ij}=\kappa \quad \text{in $\mathbb{R}^n$}.
	\end{equation}
	   Moreover, the solutions of \eqref{periodic} lie in $C^{2}(\mathbb{R}^n)$ and are unique up to an additive constant.
\end{lemma}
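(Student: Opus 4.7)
The plan is to adopt the perturbation approach of Evans \cite{Eva89, Eva92}. For each $\delta>0$, I first consider the regularized cell problem
\begin{equation*}
    \delta w^{\delta}(y) - a_{ij}(y) D_{ij} w^{\delta}(y) = a_{ij}(y) M_{ij} \quad \text{in $\mathbb{R}^n$},
\end{equation*}
sought among $1$-periodic continuous functions. The zeroth-order term $\delta w^{\delta}$ furnishes strict coercivity, so the standard Perron construction combined with the comparison principle on the torus yields a unique $1$-periodic viscosity solution $w^{\delta}$. Comparing with the constant super- and subsolutions $\pm C(n,\Lambda)|M|/\delta$ gives the $\delta$-independent bound $\|\delta w^{\delta}\|_{L^{\infty}} \leq C|M|$.

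Next I would establish a uniform oscillation bound on $w^{\delta}$. Rewriting the equation as $a_{ij} D_{ij} w^{\delta} = \delta w^{\delta} - a_{ij}M_{ij}$, the right-hand side is bounded in $L^{\infty}$ uniformly in $\delta$. Since periodicity forces $w^{\delta}$ to attain its extrema on the compact torus, the Krylov--Safonov Harnack inequality applied to $w^{\delta}-\min w^{\delta}$ on a period cube produces a uniform bound on $\osc w^{\delta}$ and a uniform $C^{0,\alpha}$ estimate. Along a subsequence $\delta_k\to 0$, the constants $\delta_k w^{\delta_k}$ then converge uniformly to some $\kappa\in\mathbb{R}$ (their oscillation vanishes while they stay bounded), and $w^{\delta_k}-w^{\delta_k}(0)$ converges uniformly to a $1$-periodic continuous function $w$. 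Passing to the viscosity limit yields $a_{ij}D_{ij}w+a_{ij}M_{ij}=\kappa$; the $C^{2}$-regularity of $w$ then follows from \Cref{rmk-dini}, since the right-hand side is a constant and the coefficient matrix is of Dini mean oscillation.

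For the uniqueness statements, suppose two pairs $(w_1,\kappa_1)$ and $(w_2,\kappa_2)$ satisfy \eqref{periodic}. Their difference $v:=w_1-w_2$ is $1$-periodic and solves $a_{ij}D_{ij}v=\kappa_1-\kappa_2$ in $\mathbb{R}^n$. Since $v$ attains both its maximum and minimum on the torus, evaluating the equation at a maximum point gives $\kappa_1-\kappa_2\leq 0$ and at a minimum point gives $\kappa_1-\kappa_2\geq 0$, hence $\kappa_1=\kappa_2$. The equation then reduces to $a_{ij}D_{ij}v=0$ with $v$ periodic, and the strong maximum principle forces $v$ to be constant. This uniqueness of $\kappa$ also upgrades the subsequential convergence $\delta_k w^{\delta_k}\to\kappa$ to convergence of the full family $\delta w^{\delta}\to\kappa$.

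The main obstacle is securing the uniform-in-$\delta$ oscillation control on $w^{\delta}$; this is where Krylov--Safonov together with periodicity play the essential role, and it is what makes the limit $\delta w^{\delta}\to\kappa$ a convergence to a genuine constant rather than to a nonconstant function. Once this compactness is in place, the rest of the argument -- passage to the viscosity limit, $C^2$-upgrade from Dini mean oscillation, and the maximum-principle uniqueness on the torus -- is essentially routine.
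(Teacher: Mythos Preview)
Your proposal is correct and follows precisely the vanishing-discount (perturbed test function) method of Evans \cite{Eva89, Eva92}, which is exactly what the paper invokes; the paper does not supply its own proof but simply cites these references together with \cite{KL16}. Your additional remark that the $C^{2}$-regularity follows from \Cref{rmk-dini} is the one point specific to the present Dini mean oscillation setting, and it is handled correctly.
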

We further provide several properties for the functional $\kappa: \mathcal{S}^n \to \mathbb{R}$.
\begin{lemma}\label{lem:periodic2}
	Let $\kappa$ be the functional on $\mathcal{S}^n$ obtained from \Cref{lem:periodic}. 
	\begin{enumerate}[(i)]
		\item There is a constant symmetric matrix $(\overline{a}_{ij})$ such that $\kappa(M)=\overline{a}_{ij}M_{ij}$.
		
		\item The matrix $(\overline{a}_{ij})$ is elliptic with the same ellipticity constants of $(a_{ij})$; that is, 
		\begin{equation*}
			\lambda |\xi|^2 \leq \overline{a}_{ij}\xi_i\xi_j \leq \Lambda|\xi|^2 \quad \text{for all $\xi \in \mathbb{R}^n$}.
		\end{equation*}
	\end{enumerate}
\end{lemma}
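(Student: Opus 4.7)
The plan is to derive linearity of $\kappa$ directly from the structure of \eqref{periodic}, and then to establish the ellipticity bounds by a pointwise maximum-principle argument applied to the periodic corrector associated with a rank-one matrix.

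For part (i), given $M_1, M_2 \in \mathcal{S}^n$ and $\alpha, \beta \in \mathbb{R}$, I would let $w_1, w_2$ be $1$-periodic solutions of \eqref{periodic} with right-hand sides $\kappa(M_1)$ and $\kappa(M_2)$, provided by \Cref{lem:periodic}. Then $\alpha w_1 + \beta w_2$ is again $1$-periodic and of class $C^2$, and satisfies
\begin{equation*}
a_{ij}D_{y_iy_j}(\alpha w_1 + \beta w_2) + a_{ij}(\alpha M_1 + \beta M_2)_{ij} = \alpha \kappa(M_1) + \beta \kappa(M_2).
\end{equation*}
The uniqueness of the critical value asserted in \Cref{lem:periodic} forces
\begin{equation*}
\kappa(\alpha M_1 + \beta M_2) = \alpha \kappa(M_1) + \beta \kappa(M_2),
\end{equation*}
so $\kappa$ is a linear functional on $\mathcal{S}^n$. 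Since every linear functional on $\mathcal{S}^n$ is represented by contraction with a unique symmetric matrix, there exists a symmetric $(\overline{a}_{ij})$ with $\kappa(M) = \overline{a}_{ij}M_{ij}$.

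For part (ii), I would fix $\xi \in \mathbb{R}^n$ and specialize to the rank-one matrix $M = \xi \otimes \xi$, so that $\overline{a}_{ij}\xi_i\xi_j = \kappa(\xi \otimes \xi)$. Let $w$ denote the corresponding $1$-periodic $C^2$-solution from \Cref{lem:periodic}. By periodicity and continuity, $w$ attains its maximum at some point $y_M$ and its minimum at some point $y_m$ inside the unit cell. At $y_M$ the Hessian $D^2 w(y_M)$ is negative semidefinite, so the uniform ellipticity of $(a_{ij})$ yields $a_{ij}(y_M)D_{ij}w(y_M) \leq 0$; evaluating \eqref{periodic} at $y_M$ then gives
\begin{equation*}
\overline{a}_{ij}\xi_i\xi_j = a_{ij}(y_M) D_{ij} w(y_M) + a_{ij}(y_M)\xi_i\xi_j \leq a_{ij}(y_M)\xi_i\xi_j \leq \Lambda|\xi|^2.
\end{equation*}
An analogous computation at $y_m$, where $D^2 w(y_m)$ is positive semidefinite, produces $\overline{a}_{ij}\xi_i\xi_j \geq a_{ij}(y_m)\xi_i\xi_j \geq \lambda|\xi|^2$, which yields the stated two-sided ellipticity bounds.

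There is no substantive obstacle in this argument; the only point worth flagging is that one must evaluate \eqref{periodic} pointwise at the extrema of $w$, but this is fully justified by the $C^2(\mathbb{R}^n)$-regularity of the periodic corrector recorded in \Cref{lem:periodic}.
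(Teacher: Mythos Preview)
Your argument is correct. Note, however, that the paper does not supply its own proof of this lemma: it is stated as a known result and attributed to \cite{Eva89, Eva92} (see also \cite{KL16}). Your proof is precisely the classical argument from those references---linearity of $\kappa$ from the uniqueness of the critical value, and the ellipticity bounds from evaluating the cell equation at interior extrema of the $C^2$ periodic corrector associated with the rank-one matrix $\xi\otimes\xi$---so there is nothing to compare.
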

By applying \Cref{lem:periodic}, we set the unique $1$-periodic solution $w_{1, M}$ which is normalized in the sense of
\begin{equation*}
	\min_{\mathbb{R}^n} w_{1, M}=0.
\end{equation*}
If there are no holes and corresponding oscillating obstacles, then the function $w_{1, M}$ is sufficient to explain the homogenization scheme. Even for our mixed setting, it is still valid for correcting the coefficient oscillation and determining the effective coefficient $\overline{a}_{ij}$. Therefore, in the remaining of this section, we concentrate on the construction of the corrector $w_2^{\varepsilon}$, which encodes the nontrivial effect of highly oscillating obstacles.

\subsection{Existence of periodic correctors}
We would like to determine the critical value $\beta^{\varepsilon}$ and the corresponding periodic corrector $w_2^{\varepsilon}$ (or $w^{\varepsilon}$ for simplicity) which denotes the corrector for the obstacle part introduced in \Cref{sec-introduction}. More precisely, the desirable pair $(\beta^{\varepsilon}, w^{\varepsilon})$ satisfies the following relation: 
	\begin{equation*}
			\left\{ \begin{aligned} 
					a_{ij}(x/\varepsilon)D_{ij}w^{\varepsilon}(x)&=\beta^{\varepsilon} && \text{in $\mathbb{R}^n \setminus T_{{a}^{\varepsilon}}$} \\
					w^{\varepsilon}&=1  && \text{on $\partial T_{{a}^{\varepsilon}}$}\\
					w^{\varepsilon}\text{ is $\varepsilon$-periodic} &\text{ and } \inf w^{\varepsilon}=0. &&
				\end{aligned} \right.
		\end{equation*}
	 Roughly speaking, the critical value $\beta^{\varepsilon}$ is uniquely chosen so that the oscillation of corrector $w^{\varepsilon}$ becomes exactly $1$. It is noteworthy that we will make use of correctors in different scales throughout this section to verify nice properties.
    
    In the remaining of the section, we will construct a $1$-periodic corrector $W^{\varepsilon}$ given by
	 \begin{equation*}
	 	W^{\varepsilon}(y):=\varepsilon^{-2}w^{\varepsilon}(\varepsilon y);
	 \end{equation*}
    we will scale back in \Cref{sec-homogenization}.
  We first concentrate on the periodicity of $W^{\varepsilon}$, which is based on the construction of periodic super/subsolutions. For simplicity, we define the periodic holes as follows:
  \begin{equation*}
      \overline{T}_{r}:=\bigcup_{k \in \mathbb{Z}^n} B_{r}(k) \quad \text{for $r>0$}.
  \end{equation*}

\begin{theorem}\label{thm-periodiccorrector}
	There exists a unique $1$-periodic solution $W^{\varepsilon}$ that solves
	\begin{equation}\label{eq-periodiccorrector}
		\left\{ \begin{aligned} 
			a_{ij}(y)D_{ij}W^{\varepsilon}(y)&=1 && \text{in $\mathbb{R}^n \setminus \overline{T}_{\overline{a}^{\varepsilon}}$} \\
			W^{\varepsilon}&=\varepsilon^{-2}  && \text{on $\partial \overline{T}_{\overline{a}^{\varepsilon}}$},
		\end{aligned} \right.
	\end{equation}
where $\overline{a}^{\varepsilon}:=a^{\varepsilon}/\varepsilon=\varepsilon^{\frac{2}{n-2}}$.
\end{theorem}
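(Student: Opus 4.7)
The statement combines uniqueness and existence, and I would treat both by passing to the compact quotient
\begin{equation*}
D := \mathbb{T}^n \setminus \overline{B_{\overline{a}^{\varepsilon}}(0)}, \qquad \mathbb{T}^n := \mathbb{R}^n / \mathbb{Z}^n,
\end{equation*}
a compact connected manifold with $C^{1,1}$ boundary on which 1-periodic solutions on $\mathbb{R}^n \setminus \overline{T}_{\overline{a}^{\varepsilon}}$ correspond to ordinary Dirichlet solutions.

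\emph{Uniqueness.} The difference $V$ of two 1-periodic solutions descends to a continuous function on $D$ satisfying $LV = 0$ in the interior and $V = 0$ on $\partial D$. Since $L$ is uniformly elliptic with no zeroth-order term, the strong maximum principle on the connected compact set $D$ rules out interior extrema, so both the maximum and the minimum of $V$ are attained on $\partial D$ and vanish there, forcing $V \equiv 0$.

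\emph{Existence.} Setting $V := W^{\varepsilon} - \varepsilon^{-2}$, the problem reduces to the Dirichlet problem
\begin{equation*}
LV = 1 \ \text{in $D$}, \qquad V = 0 \ \text{on $\partial D$}.
\end{equation*}
Since $\partial D$ is smooth and $a_{ij}$ is of Dini mean oscillation (hence uniformly continuous), the $L^p$-theory recalled in \Cref{rmk-green}, extended to the compact manifold $D$ via coordinate charts, produces a unique strong solution $V \in W^{2, p}(D) \cap C_0(\overline{D})$ for every $p > n$; by \Cref{rmk-dini} this $V$ is in fact $C^2$ in the interior. Pulling $W^{\varepsilon} := V + \varepsilon^{-2}$ back by periodicity yields the desired 1-periodic solution on $\mathbb{R}^n$. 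Equivalently, in the viscosity spirit of the paper, one can realize $W^{\varepsilon}$ by Perron's method between 1-periodic barriers: the constant $\overline{\Psi} \equiv \varepsilon^{-2}$ is a trivial supersolution matching the boundary data, while a 1-periodic subsolution is built from a negative multiple of the suitably renormalized periodic Green's function, which by \Cref{lem-green2} and \eqref{eq-almost} has the asymptotic $|y|^{2-n}$ near each $k \in \mathbb{Z}^n$ and therefore lies far below $\varepsilon^{-2}$ on $\partial B_{\overline{a}^{\varepsilon}}(k)$ with $\varepsilon$-independent multiplicative constants.

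\emph{Main obstacle.} The technical delicacy lies in the Green's-function barrier construction underpinning the viscosity approach. The boundary value $\varepsilon^{-2}$ diverges as $\varepsilon \to 0$, yet it must be matched by a 1-periodic profile whose $L$-image respects the inequality $LW \gtrless 1$ in the transition region between each hole and the cell boundary; since any smooth 1-periodic function has $L$-image with zero integral against the invariant measure of $L^*$, a periodic subsolution with $L\underline{\Psi} \geq 1$ cannot be built naively and must carry a singular contribution near each lattice point. Because $(\overline{a}^{\varepsilon})^{2-n} = \varepsilon^{-2}$, it is exactly the almost-homogeneity estimate \eqref{eq-almost} that calibrates the scales correctly and keeps the barrier constants uniform in $\varepsilon$, since no explicit fundamental solution is available for the variable coefficient $a_{ij}$.
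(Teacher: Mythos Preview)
Your primary route---pass to the quotient $D=\mathbb{T}^n\setminus\overline{B_{\overline a^{\varepsilon}}}$, reduce to the Dirichlet problem $LV=1$, $V=0$ on $\partial D$, and invoke $L^p$-theory on the flat compact manifold---is correct and is genuinely different from what the paper does. The paper instead carries out Perron's method in full: it builds an explicit $1$-periodic supersolution by gluing a solution $h^\varepsilon$ of the annular Dirichlet problem in $B_{1/2}\setminus B_{\overline a^\varepsilon}$ to a constant across $\partial B_{1/2}$ (adding a parabola to create a kink in the right direction), and an explicit $1$-periodic subsolution by taking the lattice supremum of $\widetilde h^\varepsilon+\delta p$ over balls $B_{\sqrt n}$ large enough to cover $Q_1$, after a Hopf-lemma non-degeneracy step. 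Your approach is cleaner for the bare existence/uniqueness statement; the paper's hands-on barriers, however, are not gratuitous: because $H^{\varepsilon,+}$ equals the $\varepsilon$-independent constant $M/2$ away from the holes, they immediately yield the uniform bound $\beta^\varepsilon\le C$ in \Cref{cor-critical} and feed into \Cref{lem-correctorbound}. Your $L^p$ solution exists for each fixed $\varepsilon$ but gives no such uniformity without additional work.

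Your secondary Perron sketch has a real gap. The constant $\overline\Psi\equiv\varepsilon^{-2}$ is indeed a valid supersolution (this is simpler than the paper's), but your proposed subsolution---``a negative multiple of the suitably renormalized periodic Green's function''---does not work as stated: away from the poles such a function satisfies $L\underline\Psi=0$, not $L\underline\Psi\ge 1$. Your ``main obstacle'' paragraph correctly identifies that a periodic subsolution with $L\underline\Psi\ge1$ needs a singular contribution, but the singular piece alone contributes nothing to $L$ in the perforated region; you must add a parabolic term (as the paper does with $p(y)=(|y|^2-n)/(2n\lambda)$) to generate the strict inequality, and then argue positivity of the sum so that the lattice supremum remains a subsolution. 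This is exactly the content of the paper's Step~2-2.
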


\begin{proof}
	The strong maximum principle implies that \eqref{eq-periodiccorrector} admits at most one solution. Moreover, in view of the Perron's method provided in \cite[Theorem 4.1]{CIL92}, it suffices to construct periodic super/subsolutions of \eqref{eq-periodiccorrector}. 
	
{\bf1. Supersolution $H^{\varepsilon, +}$}: We first denote by $G(y):=G(y, 0)$ the Green's function for the operator $a_{ij}D_{ij}$ in $B_{1/2}=B_{1/2}(0)$. We recall from \Cref{lem-green1} and \Cref{lem-green2} that
\begin{equation*}
	\left\{ \begin{aligned} 
		a_{ij}(y)D_{ij}G(y)&=0 && \text{in $B_{1/2} \setminus \{0\}$} \\
		G(y)&=0  && \text{on $\partial B_{1/2}$},
	\end{aligned} \right.
\end{equation*}
and that there exist universal constants $c \geq 1$ and $r_0>0$ such that
\begin{equation*}
	\frac{1}{c}|y|^{2-n} \leq G(y) \leq c|y|^{2-n} \quad \text{when $0<|y| \leq r_0$.}
\end{equation*}
We next let $h^{\varepsilon}$ be the viscosity solution of 
			\begin{equation*}
			\left\{ \begin{aligned} 
				a_{ij}(y)D_{ij}h^{\varepsilon}(y)&=0 && \text{in $B_{1/2} \setminus B_{\overline{a}^{\varepsilon}}$} \\
				h^{\varepsilon}&=\varepsilon^{-2}  && \text{on $\partial B_{\overline{a}^{\varepsilon}}$}\\
				h^{\varepsilon}&=0  && \text{on $\partial B_{1/2}$}.
			\end{aligned} \right.
		\end{equation*}
		Without loss of generality, choose $\varepsilon>0$ small enough so that $\overline{a}^{\varepsilon} \leq \min\{r_0, 1/4\}$.  Then by applying the comparison principle on $B_{1/2} \setminus B_{\overline{a}^{\varepsilon}}$, we have
		\begin{equation}\label{eq-uniformestimate}
			c^{-1}G(y) \leq h^{\varepsilon}(y) \leq cG(y) \quad \text{in $B_{1/2} \setminus  B_{\overline{a}^{\varepsilon}}$}.
		\end{equation}
	In particular, the estimate \eqref{eq-uniformestimate} implies that there exists a universal constant $M>0$ (which is independent of $\varepsilon>0$) such that
		\begin{equation*}
			-M \leq \partial_{\nu}h^{\varepsilon}\leq 0 \quad \text{on $\partial B_{1/2}$},
		\end{equation*}
		where $\nu$ is the outer normal unit vector for $B_{1/2}$. 
		
		We now define $H^{\varepsilon, +}_0: Q \setminus B_{\overline{a}^{\varepsilon}}\to \mathbb{R}$ by
		\begin{equation*}
			H^{\varepsilon, +}_0(y):=
			\left\{ \begin{array}{ll} 
				h^{\varepsilon}+2M|y|^2 & \text{in $B_{1/2}\setminus B_{\overline{a}^{\varepsilon}}$} \\
				M/2  & \text{in $Q \setminus B_{1/2}$},
			\end{array} \right.
		\end{equation*}
		where $Q=Q_1(0)=[-1/2, 1/2]^n$. Then by a direct computation, we observe that $H_0^{\varepsilon, +}$ is a viscosity supersolution of
		\begin{equation*}
			a_{ij}(y)D_{ij}H_0^{\varepsilon, +} \leq 4Mn\Lambda,
		\end{equation*}
		 in the interior of $Q \setminus B_{1/2}$ and $B_{1/2} \setminus B_{\overline{a}^{\varepsilon}}$. It only remains to check a supersolution property on $\partial B_{1/2}$. Indeed, for any $y_0 \in \partial B_{1/2}$,
			\begin{equation*}
			\left\{ \begin{array}{ll} 
				\lim_{t \searrow 0}\frac{H_0^{\varepsilon, +}(y_0-t\nu)-H_0^{\varepsilon, +}(y_0)}{t}=\partial_{\nu}h^{\varepsilon}+2M \geq M &  \\
				\lim_{t \searrow 0}\frac{H_0^{\varepsilon, +}(y_0+t\nu)-H_0^{\varepsilon, +}(y_0)}{t}=0,  &
			\end{array} \right.
		\end{equation*}
	where $\nu$ is the outer normal unit vector for $B_{1/2}$. In other words, a quadratic polynomial cannot touch $H_0^{\varepsilon, +}$ from below on $\partial B_{1/2}$.
	
	Therefore, we conclude that $H_0^{\varepsilon, +}$ is a viscosity supersolution of 
	\begin{equation}\label{eq-supersolution}
	\left\{ \begin{aligned} 
		a_{ij}(y)D_{ij}H_0^{\varepsilon, +}(y) &\leq 4Mn\Lambda && \text{in $Q \setminus B_{\overline{a}^{\varepsilon}}$} \\
		H_0^{\varepsilon, +}&\geq \varepsilon^{-2}  && \text{on $\partial B_{\overline{a}^{\varepsilon}}$}.
	\end{aligned} \right.
\end{equation}
	We finally construct a periodic supersolution $H^{\varepsilon, +}$ by
	\begin{equation*}
		H^{\varepsilon, +}(y):=
			\left\{ \begin{array}{ll} 
			H_0^{\varepsilon, +}(y-k)  & \text{if $y-k \in Q$ for $k \in \mathbb{Z}^n$} \\
			M/2  & \text{if $y-k \in \partial Q$ for $k \in \mathbb{Z}^n$}.
		\end{array} \right.
	\end{equation*}
	Then by a similar argument as before, $H^{\varepsilon, +}$ becomes a $1$-periodic viscosity supersolution of \eqref{eq-supersolution} in $\mathbb{R}^n \setminus \overline{T}_{\overline{a}^{\varepsilon}}$. The desired supersolution to \eqref{eq-periodiccorrector} can be obtained by the standard scaling and translation.

{\bf2. Subsolution $H^{\varepsilon, -}$}: 

2-1. Definition of $\widetilde{h}^{\varepsilon}$: Similarly as in the supersolution case, we denote by $\widetilde{G}(y):=G(y, 0)$ the Green's function for the operator $a_{ij}D_{ij}$ in $B_{\sqrt{n}}$. We note that $Q \subset\subset B_{\sqrt{n}}$. We again recall from \Cref{lem-green1} and \Cref{lem-green2} that
\begin{equation*}
	\left\{ \begin{aligned} 
		a_{ij}(y)D_{ij}\widetilde{G}(y)&=0 && \text{in $B_{\sqrt{n}} \setminus \{0\}$} \\
		\widetilde{G}(y)&=0  && \text{on $\partial B_{\sqrt{n}}$},
	\end{aligned} \right.
\end{equation*}
and that there exist universal constants $c \geq 1$ and $r_0>0$ such that
\begin{equation*}
	\frac{1}{c}|y|^{2-n} \leq \widetilde{G}(y) \leq c|y|^{2-n} \quad \text{when $0<|y| \leq r_0$.}
\end{equation*}
We next let $\widetilde{h}^{\varepsilon}$ be the viscosity solution of 
\begin{equation*}
	\left\{ \begin{aligned} 
		a_{ij}(y)D_{ij}\widetilde{h}^{\varepsilon}(y)&=0 && \text{in $B_{\sqrt{n}} \setminus B_{\overline{a}^{\varepsilon}}$} \\
		\widetilde{h}^{\varepsilon}&=\varepsilon^{-2}  && \text{on $\partial B_{\overline{a}^{\varepsilon}}$}\\
		\widetilde{h}^{\varepsilon}&=0  && \text{on $\partial B_{\sqrt{n}}$}.
	\end{aligned} \right.
\end{equation*}
Again by applying the comparison principle on $B_{\sqrt{n}} \setminus B_{\overline{a}^{\varepsilon}}$, we have
\begin{equation}\label{eq-uniformestimate2}
	c^{-1}\widetilde{G}(y) \leq \widetilde{h}^{\varepsilon}(y) \leq c\widetilde{G}(y) \quad \text{in $B_{\sqrt{n}} \setminus  B_{\overline{a}^{\varepsilon}}$}.
\end{equation}
In particular, the estimate \eqref{eq-uniformestimate2} together with the Hopf maximum principle for $\widetilde{G}$ shows that there exists a universal constant $\widetilde{M}>0$ (which is independent of $\varepsilon>0$) such that
\begin{equation}\label{eq-hopf1}
	\partial_{\nu}\widetilde{h}^{\varepsilon}\leq -\widetilde{M} \quad \text{on $\partial B_{\sqrt{n}}$}.
\end{equation}
On the other hand, we define $p(y):=(|y|^2-n)/(2n\lambda)$ which enjoys
\begin{equation*}
	\left\{ \begin{aligned} 
		a_{ij}(y)D_{ij}p(y) &\geq 1 && \text{in $B_{\sqrt{n}} \setminus B_{\overline{a}^{\varepsilon}}$} \\
		p &<0 && \text{in $B_{\sqrt{n}}$}\\
		p&=0  && \text{on $\partial B_{\sqrt{n}}$}\\
		\partial_{\nu}p&=(\sqrt{n}\lambda)^{-1} && \text{on $\partial B_{\sqrt{n}}$}.
	\end{aligned} \right.
\end{equation*}

2-2. Non-degeneracy of $\widetilde{h}^{\varepsilon}$ in terms of $p$: We claim that there exists a universal constant $\delta>0$ such that 
\begin{equation*}
	\widetilde{h}^{\varepsilon}+\delta p>0 \quad \text{in $B_{\sqrt{n}} \setminus B_{\overline{a}^{\varepsilon}}$}.
\end{equation*}
Suppose not; that is, for any $m \in \mathbb{N}$, there exist sequences  $\{y_m\}_{m \in \mathbb{N}} \subset B_{\sqrt{n}} \setminus B_{1/4}$ and $\{\varepsilon_m\}_{m \in \mathbb{N}} \subset (0, 1)$ such that
\begin{equation}\label{eq-contradiction}
	\widetilde{h}^{\varepsilon_m}(y_m) \leq -\frac{1}{m}p(y_m).
\end{equation}
Then there exists a subsequence of $\{y_m\}$ such that $y_m \to y_{\infty} \in \overline{B_{\sqrt{n}} \setminus B_{1/4}}$. Since $p(y_m)/m$ goes to zero when $m$ tends to $\infty$, it follows that $y_{\infty} \in \partial B_{\sqrt{n}}$.

On the other hand, we choose $z_m \in \partial B_{\sqrt{n}}$ so that $\mathrm{dist}(y_m, \partial B_{\sqrt{n}})=|y_m-z_m|>0$. Since $y_m \to y_{\infty} \in \partial B_{\sqrt{n}}$, such choice of $z_m$ is unique whenever $m$ is sufficiently large. Then by applying Taylor's theorem together with \eqref{eq-hopf1}, it turns out that 
\begin{equation*}
	\widetilde{h}^{\varepsilon_m}(y_m) \geq \widetilde{h}^{\varepsilon_m}(z_m)+\frac{1}{2}D\widetilde{h}^{\varepsilon_m}(z_m)\cdot(y_m-z_m) \geq \frac{\widetilde{M}}{2}|y_m-z_m|.
\end{equation*}
Similarly, we obtain
\begin{equation*}
	-p(y_m) \leq \frac{1}{2\sqrt{n}\lambda}|y_m-z_m|.
\end{equation*}
Therefore, by taking these estimates into \eqref{eq-contradiction}, we arrive at
\begin{equation*}
	\sqrt{n}\lambda\widetilde{M} \leq \frac{1}{m},
\end{equation*}
which is impossible.

2-3. Construction of $H^{\varepsilon, -}$: We now choose $\delta>0$ found in the step 2-2 and define $H_0^{\varepsilon, -}: \mathbb{R}^n \setminus B_{\overline{a}^{\varepsilon}} \to \mathbb{R}$ by
\begin{equation*}
	H_0^{\varepsilon, -}(y):=
	\left\{ \begin{array}{ll} 
		\widetilde{h}^{\varepsilon}+\delta p & \text{in $B_{\sqrt{n}} \setminus B_{\overline{a}^{\varepsilon}}$}\\
		0 & \text{otherwise}.
	\end{array} \right.
\end{equation*}
Then we observe that
\begin{equation*}
	\left\{ \begin{aligned} 
		a_{ij}(y)D_{ij}H_0^{\varepsilon, -}(y) &\geq \delta && \text{in $B_{\sqrt{n}} \setminus B_{\overline{a}^{\varepsilon}}$} \\
		H_0^{\varepsilon, -}& \leq \varepsilon^{-2}  && \text{on $\partial B_{\overline{a}^{\varepsilon}}$}\\
		H_0^{\varepsilon, -}&=0  && \text{on $\mathbb{R}^n \setminus B_{\sqrt{n}}$}\\
		H_0^{\varepsilon, -}&>0  && \text{on $B_{\sqrt{n}} \setminus B_{\overline{a}^{\varepsilon}}$}.
	\end{aligned} \right.
\end{equation*}
We finally define a $1$-periodic function
\begin{equation*}
	H^{\varepsilon, -}(y)=\sup_{k \in \mathbb{Z}^n} H_0^{\varepsilon, -}(y-k) \quad \text{for $y \in \mathbb{R}^n \setminus \overline{T}_{\overline{a}^{\varepsilon}}$.}
\end{equation*}
By step 2-2, the supremum can be taken over $k \in \mathbb{Z}^n$ such that $y-k \in B_{\sqrt{n}}$. Then $H^{\varepsilon, -}$ is a viscosity subsolution of
\begin{equation*}
	\left\{ \begin{aligned} 
		a_{ij}(y)D_{ij}H^{\varepsilon, -}(y)&\geq \delta && \text{in $\mathbb{R}^n \setminus \overline{T}_{\overline{a}^{\varepsilon}}$} \\
		H^{\varepsilon, -}&\leq \varepsilon^{-2}  && \text{on  $\partial \overline{T}_{\overline{a}^{\varepsilon}}$},
	\end{aligned} \right.
\end{equation*}
because
\begin{enumerate}[(i)]
	\item if $y-k \in B_{\sqrt{n}}$ for unique $k \in \mathbb{Z}^n$, then $H^{\varepsilon, -}(\cdot)=H_0^{\varepsilon, -}(\cdot-k)$ near the point $y$;
	
	\item  if $y-k \in B_{\sqrt{n}}$ for different $k \in \mathbb{Z}^n$, then we note that: if $u_1$ and $u_2$ are viscosity subsolutions in $\Omega$, then so is $\sup\{u_1, u_2\}$.
\end{enumerate}
The desired subsolution to \eqref{eq-periodiccorrector} can be obtained by standard scaling and translation.
\end{proof}

\begin{corollary}\label{cor-critical}
		Let $\varepsilon \in (0,1)$. Then there exists a unique constant $\beta^{\varepsilon} > 0$ such that
			\begin{equation*}
			\left\{ \begin{aligned} 
				a_{ij}(y)D_{ij}W^{\varepsilon}(y)&=\beta^{\varepsilon} && \text{in $\mathbb{R}^n \setminus \overline{T}_{\overline{a}^{\varepsilon}}$} \\
				W^{\varepsilon}&=\varepsilon^{-2}  && \text{on $\partial \overline{T}_{\overline{a}^{\varepsilon}}$}\\
				\inf W^{\varepsilon}&=0  && 
			\end{aligned} \right.
		\end{equation*}
		admits a unique $1$-periodic solution $W^{\varepsilon}$. Moreover, there exists a constant $C>0$ depending only on $n$, $\lambda$, and $\Lambda$ such that
		\begin{equation*}
			0 < \beta^{\varepsilon} \leq C.
		\end{equation*}
\end{corollary}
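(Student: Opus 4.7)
The plan is to reduce Corollary~\ref{cor-critical} to Theorem~\ref{thm-periodiccorrector} by a one-parameter linearization. Let $\widetilde{W}^{\varepsilon}$ denote the unique $1$-periodic solution produced by Theorem~\ref{thm-periodiccorrector}. For each $\beta\in\mathbb{R}$ the affine combination
\[
W_\beta\;:=\;\beta\,\widetilde{W}^{\varepsilon}+(1-\beta)\,\varepsilon^{-2}
\]
is $1$-periodic, satisfies $a_{ij}D_{ij}W_\beta=\beta$ in $\mathbb{R}^n\setminus\overline{T}_{\overline{a}^{\varepsilon}}$, and takes the value $\varepsilon^{-2}$ on $\partial\overline{T}_{\overline{a}^{\varepsilon}}$. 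The strong maximum principle (available under the Dini mean oscillation hypothesis on the coefficients, cf.\ \Cref{rmk-dini}) forces $W_\beta$ to be the only such function for each fixed $\beta$, so the normalization $\inf W=0$ picks out precisely one scalar
\[
\beta^{\varepsilon}\;:=\;\frac{\varepsilon^{-2}}{\varepsilon^{-2}-\inf\widetilde{W}^{\varepsilon}},\qquad W^{\varepsilon}\;:=\;W_{\beta^{\varepsilon}}.
\]
The denominator is strictly positive because $L\widetilde{W}^{\varepsilon}=1>0$ makes $\widetilde{W}^{\varepsilon}$ a non-constant $L$-subsolution of $Lu=0$; the strong maximum principle then forces its maximum to be attained only on $\partial\overline{T}_{\overline{a}^{\varepsilon}}$, and periodicity makes the infimum a true minimum attained in the complement, yielding $\inf\widetilde{W}^{\varepsilon}<\varepsilon^{-2}$ and hence $\beta^{\varepsilon}>0$. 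Uniqueness of the pair $(\beta^{\varepsilon},W^{\varepsilon})$ is built into the explicit formula above.

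For the universal upper bound $\beta^{\varepsilon}\le C$, I would quantify the gap $\varepsilon^{-2}-\inf\widetilde{W}^{\varepsilon}$ using the supersolution $H^{\varepsilon,+}$ constructed in the proof of Theorem~\ref{thm-periodiccorrector}. Setting $\gamma:=4Mn\Lambda$, the rescaled function $H^{\varepsilon,+}/\gamma$ is a $1$-periodic viscosity supersolution of $Lu\le 1$ on $\mathbb{R}^n\setminus\overline{T}_{\overline{a}^{\varepsilon}}$ with $H^{\varepsilon,+}/\gamma\ge\varepsilon^{-2}/\gamma$ on the hole boundary. Hence the difference $V:=\widetilde{W}^{\varepsilon}-H^{\varepsilon,+}/\gamma$ satisfies $LV\ge 0$; by continuity and $1$-periodicity $V$ attains its supremum on the compact fundamental cell, and the strong maximum principle confines this supremum to $\partial\overline{T}_{\overline{a}^{\varepsilon}}$, giving
\[
\widetilde{W}^{\varepsilon}\;\le\;\frac{H^{\varepsilon,+}}{\gamma}+\varepsilon^{-2}\bigl(1-\gamma^{-1}\bigr) \quad\text{on } \mathbb{R}^n\setminus\overline{T}_{\overline{a}^{\varepsilon}}.
\]
Evaluating on the region $Q\setminus B_{1/2}$, where $H^{\varepsilon,+}\equiv M/2$ by construction, one obtains $\inf\widetilde{W}^{\varepsilon}\le M/(2\gamma)+\varepsilon^{-2}(1-\gamma^{-1})$, so $\varepsilon^{-2}-\inf\widetilde{W}^{\varepsilon}\ge(\varepsilon^{-2}-M/2)/\gamma$ and consequently $\beta^{\varepsilon}\le 2\gamma$ as soon as $\varepsilon^{-2}\ge M$. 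For the residual range of $\varepsilon$ within the regime where Theorem~\ref{thm-periodiccorrector} applies, a standard interior $L^p$-estimate on the fixed fundamental cell produces a further universal bound; merging the two regimes yields the desired constant $C=C(n,\lambda,\Lambda)$.

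The main subtlety I anticipate is the careful application of the maximum principle to the periodic subsolution $V$: one must combine $1$-periodicity (to guarantee the supremum is attained in the compact fundamental cell) with the strong maximum principle (to exclude an interior maximum, which would force $V$ to be constant on a connected component of $\mathbb{R}^n\setminus\overline{T}_{\overline{a}^{\varepsilon}}$ and contradict $LH^{\varepsilon,+}\not\equiv\gamma$). Apart from this point, every step consists of linear algebra on solutions of the same equation and a single application of the explicit cap from the construction of $H^{\varepsilon,+}$, so I expect the formal proof to remain short once one appeals to the already-constructed sub/supersolutions of Theorem~\ref{thm-periodiccorrector}.
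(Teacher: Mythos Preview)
Your proposal is correct and follows essentially the same route as the paper: the affine interpolation $W_\beta=\beta\,\widetilde{W}^{\varepsilon}+(1-\beta)\varepsilon^{-2}$ is exactly the paper's $W^{\varepsilon}_{\eta}$ with $\beta=1-\eta$, and your explicit comparison against $H^{\varepsilon,+}/\gamma+\varepsilon^{-2}(1-\gamma^{-1})$ is a valid (and more detailed) unpacking of what the paper compresses into the single remark that ``the estimate $\beta^{\varepsilon}\le C$ immediately follows from the construction of supersolutions in the proof of \Cref{thm-periodiccorrector}.'' The only caution is that your use of the \emph{strong} maximum principle for the viscosity subsolution $V$ is unnecessary: since $H^{\varepsilon,+}/\gamma+\varepsilon^{-2}(1-\gamma^{-1})$ is already a periodic supersolution with boundary data $\ge\varepsilon^{-2}$, the ordinary comparison (or Perron) principle on a fundamental cell suffices to give $\widetilde{W}^{\varepsilon}\le H^{\varepsilon,+}/\gamma+\varepsilon^{-2}(1-\gamma^{-1})$ directly, sidestepping the regularity subtlety you flagged.
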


\begin{proof}
	For $\varepsilon>0$, let $W^{\varepsilon}$ be the unique $1$-periodic solution given by \Cref{thm-periodiccorrector}. Then we set 
	\begin{equation*}
			W^{\varepsilon}_{\eta}:=\eta \varepsilon^{-2}+(1-\eta) W^{\varepsilon}
		\end{equation*}
	 for $\eta$ to be determined. Then $W^{\varepsilon}_{\eta}$ solves
	 \begin{equation*}
		 	\left\{ \begin{aligned} 
			 		a_{ij}(y)D_{ij}W^{\varepsilon}_{\eta}(y)&=1-\eta && \text{in $\mathbb{R}^n \setminus \overline{T}_{\overline{a}^{\varepsilon}}$} \\
			 		W_{\eta}^{\varepsilon}&=\varepsilon^{-2}  && \text{on $\partial \overline{T}_{\overline{a}^{\varepsilon}}$}\\
			 		\inf W_{\eta}^{\varepsilon}&=\eta \varepsilon^{-2}+(1-\eta)\inf W^{\varepsilon}  && 
			 	\end{aligned} \right.
		 \end{equation*}
 	Since $\inf W^{\varepsilon} <\varepsilon^{-2}$ due to the strong maximum principle, there exists a unique $\eta \, (<1)$ such that $\inf W^{\varepsilon}_{\eta}=0$. Moreover, the estimate $\beta^{\varepsilon}\leq C$ immediately follows from the construction of supersolutions in the proof of \Cref{thm-periodiccorrector}.
\end{proof}

In fact, we can say more about the behaviors of periodic correctors $W^{\varepsilon}$.
\begin{lemma}\label{lem-correctorbound}
Let $(\beta^{\varepsilon}, W^{\varepsilon})$ be the unique pair found in \Cref{cor-critical}. Then the following hold:
\begin{enumerate}[(i)]
    \item The infimum of $W^{\varepsilon}$ is attained in $Q_1 \setminus B_{r_1}$ for some universal constant $r_1 \in (0, 1/2)$; that is, there exists some universal constant $r_1>0$ such that $W^\varepsilon(y) > 0$ if $\overline{a}^{\varepsilon} \leq |y| \le r_1$.

In particular, there exists a universal constant $C>0$ such that
\begin{equation*}
    0 \leq W^\varepsilon \le C \quad \text{in $\mathbb R^n \setminus \overline{T}_{r_1}$.}
\end{equation*}

    \item There exist universal constants $C_1, C_2>0$ such that
    \begin{equation*}
       C_1|y|^{2-n} \leq  W^{\varepsilon}(y) \leq C_2|y|^{2-n} \quad \text{if $\overline{a}^{\varepsilon} \leq |y| \leq r_1$}.
    \end{equation*}

\end{enumerate}
\end{lemma}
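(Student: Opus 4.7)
My plan is to obtain both parts from a pair of barrier arguments modelled on the Green's function $G(\cdot)=G(\cdot,0)$ for $a_{ij}D_{ij}$ in $B_{1/2}$ already used in the proof of \Cref{thm-periodiccorrector}. By \Cref{lem-green1} and \Cref{lem-green2} there exist universal constants $c_0 \geq 1$ and $r_0 \in (0,1/2)$ with
\begin{equation*}
   \tfrac{1}{c_0}|y|^{2-n} \leq G(y) \leq c_0\,|y|^{2-n} \quad \text{for } 0<|y|\leq r_0.
\end{equation*}
I shall work inside a single fundamental cell, exploit the $1$-periodicity of $W^{\varepsilon}$ together with $W^{\varepsilon} \geq 0$ (which is immediate from $\inf W^{\varepsilon}=0$), and take $\varepsilon$ so small that $\overline{a}^{\varepsilon}$ is much smaller than every universal radius appearing below.

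For the lower bound in (ii) and the positivity claim in (i), I fix a small universal $R \in (0,r_0)$ and, on $B_R \setminus B_{\overline{a}^{\varepsilon}}$, introduce the subsolution
\begin{equation*}
   \underline{\Phi}(y) := \frac{1}{c_0}\bigl(G(y)-\max_{\partial B_R}G\bigr) + \frac{\beta^{\varepsilon}}{2n\lambda}\bigl(|y|^2-R^2\bigr).
\end{equation*}
Since $a_{ij}D_{ij}G=0$ and $a_{ij}\delta_{ij} \geq n\lambda$, one has $a_{ij}D_{ij}\underline{\Phi} \geq \beta^{\varepsilon}$, while the Green's function bounds give $\underline{\Phi} \leq \varepsilon^{-2}$ on $\partial B_{\overline{a}^{\varepsilon}}$ and $\underline{\Phi} \leq 0 \leq W^{\varepsilon}$ on $\partial B_R$. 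Comparison then yields $W^{\varepsilon} \geq \underline{\Phi} \geq c_0^{-2}|y|^{2-n}-C_{\ast}$ on this annulus, with $C_{\ast}>0$ universal. Choosing $r_1 := R/K$ for a large universal $K=K(n,\lambda,\Lambda,\omega_A)$ forces the singular term to dominate $C_{\ast}$ throughout $\{\overline{a}^{\varepsilon}\leq |y|\leq r_1\}$, hence $W^{\varepsilon}(y) \geq C_1|y|^{2-n}$ there. In particular $W^{\varepsilon}>0$ in $B_{r_1}\setminus B_{\overline{a}^{\varepsilon}}$, so the infimum $0$ can only be attained in $Q_1\setminus B_{r_1}$, settling the first assertion of (i).

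For the upper estimates I combine Harnack's inequality with a mirror supersolution. Since $W^{\varepsilon} \geq 0$ is a solution of $a_{ij}D_{ij}W^{\varepsilon}=\beta^{\varepsilon}$ on $\mathbb{R}^n \setminus \overline{T}_{\overline{a}^{\varepsilon}}$, which for small $\varepsilon$ contains a neighborhood of $\mathbb{R}^n \setminus \overline{T}_{r_1}$, and since its infimum $0$ is attained inside $Q_1\setminus B_{r_1}$ by the previous step, the Harnack inequality together with the bound $\beta^{\varepsilon}\leq C$ from \Cref{cor-critical} yields a universal $C_1>0$ with $W^{\varepsilon} \leq C_1$ on $\mathbb{R}^n \setminus \overline{T}_{r_1}$, which finishes (i). For the upper half of (ii), the supersolution $\overline{\Phi}(y) := c_0\,G(y)+C_1$ is $a_{ij}$-superharmonic, dominates $\varepsilon^{-2}$ on $\partial B_{\overline{a}^{\varepsilon}}$ thanks to $G \geq c_0^{-1}|y|^{2-n}$, and dominates $W^{\varepsilon}$ on $\partial B_{r_1}$ by the Harnack step; comparison on $B_{r_1}\setminus B_{\overline{a}^{\varepsilon}}$ then produces $W^{\varepsilon}(y) \leq c_0^2|y|^{2-n}+C_1 \leq C_2|y|^{2-n}$. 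The main obstacle will be the calibration inside the lower-barrier step: after absorbing the cutoff $\max_{\partial B_R}G$ and the quadratic correction, the singular term $c_0^{-2}|y|^{2-n}$ must still control the universal error $C_{\ast}$ uniformly on the full annulus, which is precisely what forces $r_1$ to be chosen strictly smaller than the working radius $R$.
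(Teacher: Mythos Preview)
Your proposal is correct and follows essentially the same strategy as the paper: Green's function barriers controlling the singular behavior near the hole, combined with Harnack's inequality away from the hole, with $r_1$ chosen small enough that the $|y|^{2-n}$ term dominates the bounded error. The only tactical difference is that the paper absorbs the $\beta^{\varepsilon}$ right-hand side by subtracting an auxiliary Poisson solution $h^{\varepsilon}$ (so that $\widetilde W^{\varepsilon}=W^{\varepsilon}-h^{\varepsilon}$ becomes $a_{ij}$-harmonic and can be compared directly to a multiple of $G$), whereas you build the quadratic correction into the barrier $\underline{\Phi}$ itself; both devices serve the same purpose and lead to the same conclusion.
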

\begin{proof}
(i) Let $h^\varepsilon$ be a solution of
\begin{equation*}
	\left\{ \begin{aligned} 
		a_{ij}(y) D_{ij} h^\varepsilon &= \beta^\varepsilon && \text{in } B_{1/2} \\
		h^\varepsilon &= 0  && \text{on } \partial B_{1/2}.
	\end{aligned} \right.
\end{equation*}
By the maximum principle, there exists a universal constant $C>0$ such that
\begin{equation*}
 -C \leq h^{\varepsilon} \leq 0 \quad \text{in $B_{1/2}$}.
\end{equation*}
Let $\widetilde W^\varepsilon = W^\varepsilon - h^\varepsilon$. Then, for $\varepsilon>0$ small enough, $\widetilde W^\varepsilon$ satisfies
\begin{equation}
	\left\{ \begin{aligned} 
		a_{ij} D_{ij} \widetilde W^\varepsilon &= 0 &&\text{in } B_{1/2} \setminus B_{\overline a^\varepsilon}\\
\widetilde W^\varepsilon &\geq 1/(2\varepsilon^2) &&\text{on } \partial B_{\overline a^\varepsilon}\\ 
\widetilde W^\varepsilon &\ge 0 &&\text{on } \partial B_{1/2}.
	\end{aligned} \right.
\end{equation}
From \eqref{eq-almost}, there exist positive constants $c_1$, $c_2$, and $r_0$ such that 
\begin{equation*}
c_1|y|^{2-n} \le G(y,0) \le c_2|y|^{2-n} \quad \text{if $|y|\leq r_0$},
\end{equation*}
where $G$ is the Green's function for $L$ in $B_{1/2}$. In particular, 
\begin{equation*}
c_1\varepsilon^{-2} \le G(y,0) \le c_2\varepsilon^{-2} 
\end{equation*}
holds on $\partial B_{\overline a^\varepsilon}$ for sufficiently small $\varepsilon$. Since $G(y,0)=0$ on $\partial B_{1/2}$, we have from comparison that 
\begin{equation*}
\frac{1}{2c_2}G(y,0) \le \widetilde W^\varepsilon \quad \text{in $B_{1/2} \setminus B_{\overline a^\varepsilon}$}
\end{equation*}
and so
\begin{equation*}
\frac{|y|^{2-n}}{2c_2} \le W^\varepsilon+C \quad \text{in $B_{r_0} \setminus B_{\overline a^\varepsilon}$}.
\end{equation*}
Hence, if we take $r_1 = \min \{r_0, (4Cc_2)^{1/(2-n)}\}$, then $W^\varepsilon \ge C>0$ in $B_{r_1} \setminus B_{\overline a^\varepsilon}$, which proves the first part of the lemma.

On the other hand, the Harnack inequality for $W^{\varepsilon}$ in $Q\setminus B_{r_1}$ tells us that 
\begin{equation*}
\sup_{Q \setminus B_{r_1}} W^\varepsilon \le C\inf_{Q \setminus B_{r_1}} W^\varepsilon+C\beta^{\varepsilon} \leq C.
\end{equation*}

(ii) It follows from a similar argument as in (i).
\end{proof}

\subsection{Uniqueness of the critical value \texorpdfstring{$\beta_0$}{b0}}
We now move our attention to a limit of the critical value $\beta^{\varepsilon}$. Indeed, we are going to prove the uniqueness of such a limit, by investigating the asymptotic behavior of $W^{\varepsilon}$ near the singularity $k \in \mathbb{Z}^n$. It is noteworthy that \Cref{thm-uniquebeta} can be developed in an alternative way exploiting the notion of capacity, when it comes to the simplest case of $a_{ij}(\cdot)=\delta_{ij}$; see \cite[Lemma 2.1]{CL08}.

We consider another scaled function of $w^{\varepsilon}$ and $W^{\varepsilon}$:
\begin{equation*}
    \overline{W}^\varepsilon(z) := \varepsilon^2 W^\varepsilon(\overline a^\varepsilon z).
\end{equation*}
Then we observe that $\overline W^\varepsilon$ satisfies
\begin{equation*}
\left\{ \begin{aligned} 
a_{ij}(\overline a^\varepsilon z) D_{ij} \overline W^\varepsilon(z) &= (a^\varepsilon)^2 \beta^\varepsilon && \text{in $B_{1 / (2\overline a^\varepsilon)} \setminus B_1$} \\
\overline{W}^{\varepsilon}&=1  && \text{on $\partial B_1$}.
\end{aligned} \right.
\end{equation*}
Moreover, we denote by $\overline{W}_0$ the $L_0$-capacity potential  of $B_1$, where $L_0=a_{ij}(0)D_{ij}$. More precisely, $\overline{W}_0$ is the unique solution of 
 \begin{equation*}
		 	\left\{ \begin{aligned}
			 		a_{ij}(0)D_{ij}\overline{W}_0&=0 && \text{in $\mathbb{R}^n \setminus B_1$} \\
			 		\overline{W}_0&=1  && \text{on $\partial B_1$}\\
			 		\overline{W}_0(z) &\to 0  && \text{as $|z| \to \infty$}. 
			 	\end{aligned} \right.
		 \end{equation*}
   Then it is to check that there exists a constant $C=C(n, \lambda, \Lambda)>0$ such that
   \begin{equation}\label{eq-cappotential}
       0 \leq \overline{W}_0(z) \leq C|z|^{2-n} \quad \text{in $\mathbb{R}^n \setminus B_1$}
   \end{equation}
   and
   \begin{equation}\label{eq-cappotential2}
       |D^2\overline{W}_0(z)| \leq C|z|^{-n} \quad \text{in $\mathbb{R}^n \setminus B_1$}.
   \end{equation}

\begin{lemma} \label{lem-we-difference}
Let $r_1 \in (0, 1/2)$ be the constant chosen in \Cref{lem-correctorbound}. If $R  \in [1, r_1/\overline{a}^{\varepsilon}]$ and $p \in [1, \infty)$, then we have
\begin{equation*}
\| \overline W^\varepsilon -\overline W_0 \|_{L^{\infty}(B_R \setminus B_1)}+\| \overline{W}^\varepsilon - \overline{W}_0 \|_{W^{2,p}(B_R\setminus B_1)} \le C \left(\varepsilon^2 + R^{2-n}+\phi(\overline{a}^{\varepsilon} R)\, R^2\right),
\end{equation*}
where $\phi$ denotes the modulus of continuity of $a_{ij}$, i.e., $\phi:[0, \infty) \to [0, \infty)$ satisfies $\phi(0+)=0$ and 
\begin{equation*}
    |a_{ij}(z_1)-a_{ij}(z_2)|\leq \phi(|z_1-z_2|) \quad \text{for any $z_1, z_2 \in \mathbb{R}^n$.}
\end{equation*}
\end{lemma}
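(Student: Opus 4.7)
The plan is to work with $v := \overline{W}^\varepsilon - \overline{W}_0$, which by direct subtraction satisfies
\[
L_\varepsilon v := a_{ij}(\overline{a}^\varepsilon z)D_{ij}v = (a^\varepsilon)^2\beta^\varepsilon - (a_{ij}(\overline{a}^\varepsilon z) - a_{ij}(0))D_{ij}\overline{W}_0 =: f_\varepsilon
\]
in $B_R \setminus B_1$, with $v = 0$ on $\partial B_1$. On $\partial B_R$, rescaling the bound $W^\varepsilon(y) \le C_2|y|^{2-n}$ from \Cref{lem-correctorbound}(ii), combined with the identity $\varepsilon^{2}(\overline{a}^\varepsilon)^{2-n}=1$ and the upper bound \eqref{eq-cappotential} for $\overline{W}_0$, yields $|v| \le CR^{2-n}$ on $\partial B_R$. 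The right-hand side is controlled pointwise via \eqref{eq-cappotential2} by $|f_\varepsilon(z)| \le C(a^\varepsilon)^2 + C\phi(\overline{a}^\varepsilon|z|)|z|^{-n}$, where we may assume without loss of generality that $\phi$ is nondecreasing by replacing it with $t \mapsto \sup_{|z_1-z_2|\le t}|a_{ij}(z_1)-a_{ij}(z_2)|$.

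To establish the $L^\infty$ bound I would split $v = v_1 + v_2 + v_3$, where $v_1$ is $L_\varepsilon$-harmonic with the inherited boundary values, $v_2$ solves $L_\varepsilon v_2 = (a^\varepsilon)^2\beta^\varepsilon$ with zero boundary data, and $v_3$ solves $L_\varepsilon v_3 = -(a_{ij}(\overline{a}^\varepsilon z) - a_{ij}(0))D_{ij}\overline{W}_0$ with zero boundary data. The maximum principle gives $|v_1| \le CR^{2-n}$. A quadratic barrier of the form $c(a^\varepsilon)^2\beta^\varepsilon(R^2-|z|^2)$, combined with uniform ellipticity, bounds $|v_2| \le C(a^\varepsilon)^2R^2$; invoking $R \le 1/\overline{a}^\varepsilon$ and the scaling identity $a^\varepsilon/\overline{a}^\varepsilon = \varepsilon$ collapses this to $|v_2| \le C\varepsilon^2$. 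For $v_3$, the decay of $D^2\overline{W}_0$ combined with the convergent integral $\int_1^{\infty} r^{n-1-n^2}\,dr$ gives $\|(a_{ij}(\overline{a}^\varepsilon\cdot)-a_{ij}(0))D_{ij}\overline{W}_0\|_{L^n(B_R\setminus B_1)} \le C\phi(\overline{a}^\varepsilon R)$, and then the Alexandrov--Bakelman--Pucci estimate yields $|v_3| \le CR\phi(\overline{a}^\varepsilon R) \le CR^2\phi(\overline{a}^\varepsilon R)$. Summation produces the desired $L^\infty$ envelope.

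For the $W^{2,p}$ part I would perform a dyadic decomposition of $B_R \setminus B_1$ into annuli $A_k := B_{2^{k+1}} \setminus B_{2^k}$ for $0 \le k \le \log_2 R$, apply the scaling $z = 2^k \xi$ on each, and invoke the Calder\'on--Zygmund $W^{2,p}$ theory for coefficients of Dini mean oscillation (which applies to the rescaled coefficient $a_{ij}(2^k\overline{a}^\varepsilon \xi)$ whose modulus is still dominated by $\phi$, cf. \Cref{rmk-dini}) on the fixed model annulus $A_0 = B_2 \setminus B_1$. After scaling back on slightly enlarged subannuli one arrives at $\|D^2v\|_{L^p(A_k)} \le C(2^{-2k}\|v\|_{L^p(A_k')} + \|f_\varepsilon\|_{L^p(A_k')})$, and summing these bounds against the pointwise envelopes already proved for $v$ and $f_\varepsilon$ collapses into the three-term target $\varepsilon^2 + R^{2-n} + R^2\phi(\overline{a}^\varepsilon R)$.

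The main obstacle is treating the collar near $\partial B_1$, where purely interior Calder\'on--Zygmund estimates are insufficient and one must invoke up-to-boundary $W^{2,p}$-regularity across the smooth sphere $\partial B_1$, relying on the zero Dirichlet datum. The Dini mean oscillation hypothesis on $a_{ij}$ is exactly what guarantees that the $W^{2,p}$ constants in the rescaled problems are uniform in both $\varepsilon$ and the dyadic index $k$; a secondary delicate point is the careful accounting of powers of $2^k$ across the sum so that the growth of $|A_k|$ does not spoil the claimed envelope.
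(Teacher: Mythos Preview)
Your approach is correct and follows essentially the same strategy as the paper: derive the equation for $v=\overline{W}^\varepsilon-\overline{W}_0$, bound the boundary data via the a priori estimates on $\overline{W}^\varepsilon$ and $\overline{W}_0$, use the maximum principle for the $L^\infty$ bound, and then invoke $L^p$-theory for the $W^{2,p}$ part. The paper's version is simply more condensed: it uses the crude pointwise bound $|D^2\overline{W}_0(z)|\le C|z|^{-n}\le C$ on $\{|z|\ge 1\}$ to apply the maximum principle in one stroke (no three-term splitting, no ABP), and then cites ``$L^p$-theory'' without spelling out the dyadic localization you describe.
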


\begin{proof}
We first note that the existence of $\phi$ is guaranteed by \Cref{rmk-dini}. Then by applying \Cref{lem-correctorbound} (ii) and \eqref{eq-cappotential}, we observe that 
\begin{equation*}
|\overline{W}^\varepsilon(z) - \overline{W}_0(z)| \le |\overline{W}^\varepsilon(z)| + |\overline{W}_0(z)| \le C\left( \varepsilon^2 + R^{2-n} \right)
\end{equation*}
holds for $|z|=R$, provided that $\overline{a}^{\varepsilon} R \leq r_1$. 

Moreover, $\overline{W}^\varepsilon - \overline{W}_0$ satisfies
\begin{equation*}
a_{ij}(\overline a^\varepsilon z) D_{ij}(\overline W^\varepsilon -\overline W_0) = (a_{ij}(\overline a^\varepsilon z) - a_{ij}(0))D_{ij} \overline{W}_0+ (a^\varepsilon)^2 \beta^\varepsilon \quad \text{ in $B_R\setminus B_1$}.
\end{equation*}
Thus, it follows from the maximum principle that 
\begin{equation*}
\| \overline W^\varepsilon -\overline W_0 \|_{L^{\infty}(B_R \setminus B_1)} \le C\left[ \left( \varepsilon^2 +R^{2-n} \right) +R^2\left(\phi(\overline{a}^{\varepsilon}R) + (a^\varepsilon)^2\right)  \right],
\end{equation*}
where we used \Cref{cor-critical} and \eqref{eq-cappotential2}. Since 
$(a^{\varepsilon}R)^2=\varepsilon^2 (\overline{a}^{\varepsilon}R)^2 \leq r_1^2\varepsilon^2$, an application of the $L^p$-theory yields the desired estimate.
\end{proof}

\begin{theorem}\label{thm-uniquebeta}
Let $\beta^{\varepsilon}$ be the critical value determined by \Cref{cor-critical}; that is, 	
\begin{equation*}
\left\{ \begin{aligned} 
a_{ij}(y)D_{ij}W^{\varepsilon}(y)&=\beta^{\varepsilon} && \text{in $\mathbb{R}^n \setminus \overline{T}_{\overline{a}^{\varepsilon}}$} \\
W^{\varepsilon}&=\varepsilon^{-2}  && \text{on $\partial \overline{T}_{\overline{a}^{\varepsilon}}$}\\
\inf W^{\varepsilon}&=0  && 
\end{aligned} \right.
\end{equation*}
admits a unique $1$-periodic solution $W^{\varepsilon}$. Then there exists a unique limit $\beta$ of $\beta^{\varepsilon}$.
\end{theorem}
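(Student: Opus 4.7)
By \Cref{cor-critical} the family $\{\beta^\varepsilon\}$ is uniformly bounded, so every sequence $\varepsilon_m\to 0$ admits a subsequence with $\beta^{\varepsilon_m}\to\beta_*$. The plan is to characterize $\beta_*$ through a ``limit periodic corrector'' $W_*$ whose data are fixed independently of the subsequence, yielding uniqueness. First I would identify the universal leading singularity of $W^\varepsilon$ near each lattice point: by \Cref{lem-we-difference} the rescaling $\overline{W}^\varepsilon(z)=\varepsilon^2 W^\varepsilon(\overline{a}^\varepsilon z)$ converges locally uniformly on $\mathbb{R}^n\setminus B_1$ to the $L_0$-capacity potential $\overline{W}_0$ with $L_0=a_{ij}(0)D_{ij}$, whose asymptotic expansion at infinity reads $\overline{W}_0(z)=\gamma_0|z|^{2-n}+o(|z|^{2-n})$ with $\gamma_0:=\operatorname{cap}_{a_{ij}(0)}(B_1)$. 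Translating back to the slow scale, $W^\varepsilon(y)\sim\gamma_0|y|^{2-n}$ uniformly in the window $\overline{a}^\varepsilon\ll|y|\ll 1$. On the other hand, by \Cref{lem-green2} the Green's function $G(\cdot,0)$ for $L$ in $B_{1/2}$ enjoys the identical leading behavior with an explicit coefficient $c_0=c_0(a_{ij}(0))$, so the modified function
\[
V^\varepsilon(y):=W^\varepsilon(y)-\tfrac{\gamma_0}{c_0}\,G(y,0),\qquad y\in B_{1/2}\setminus B_{\overline{a}^\varepsilon},
\]
cancels this singularity to leading order and satisfies $a_{ij}D_{ij}V^\varepsilon=\beta^\varepsilon$ in the punctured ball, with $V^\varepsilon|_{\partial B_{\overline{a}^\varepsilon}}\to 0$ and $V^\varepsilon|_{\partial B_{1/2}}$ uniformly bounded via \Cref{lem-correctorbound}.

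To obtain compactness up to the origin I would split $V^\varepsilon=V_1^\varepsilon+V_2^\varepsilon$, where $V_2^\varepsilon$ absorbs the $\beta^\varepsilon$-source and the smooth outer boundary data (handled by classical $W^{2,p}$-estimates away from $0$), while $V_1^\varepsilon$ encodes the residual datum on $\partial B_{\overline{a}^\varepsilon}$. After harmonic extension across the hole, the effective forcing for $V_1^\varepsilon$ has uniformly bounded $L^1$-mass but $L^p$-mass blowing up for every $p>1$; this is precisely where \Cref{thm-L1data} becomes decisive, yielding a uniform $W^{1,q}$-estimate for any $q<n/(n-1)$. A diagonal subsequence then produces $V^{\varepsilon_m}\to V_*$ locally uniformly on $\overline{B}_{1/2}\setminus\{0\}$, with $V_*$ continuous at the origin and solving $a_{ij}D_{ij}V_*=\beta_*$ in $B_{1/2}$. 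Repeating the analysis at every lattice point by $1$-periodicity of $W^\varepsilon$ and of $a_{ij}$ assembles a limit corrector $W_*$ which is $1$-periodic on $\mathbb{R}^n$, has isolated singularities of the universal strength $\gamma_0|y-k|^{2-n}$ at each $k\in\mathbb{Z}^n$, satisfies $a_{ij}D_{ij}W_*=\beta_*$ off the lattice, and inherits $\inf W_*=0$ from $W^\varepsilon$.

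If two subsequences produced limit pairs $(W_*^{(1)},\beta_*^{(1)})$ and $(W_*^{(2)},\beta_*^{(2)})$, then $W_*^{(1)}-W_*^{(2)}$ would be a bounded $1$-periodic function on the torus $\mathbb{R}^n/\mathbb{Z}^n$---the leading singularities cancel precisely because the normalizing constants $\gamma_0/c_0$ match---and would solve $a_{ij}D_{ij}(W_*^{(1)}-W_*^{(2)})=\beta_*^{(1)}-\beta_*^{(2)}$ in the viscosity sense. The strong maximum principle on a compact manifold without boundary immediately forces $\beta_*^{(1)}=\beta_*^{(2)}$: a nontrivial strict sub/supersolution of a homogeneous equation cannot exist. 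Hence $\beta^\varepsilon\to\beta:=\beta_*$ along the full sequence. The hardest part will be the compactness step: the residual forcing near each hole fails to be controlled in any $L^p$ with $p>1$, so classical Calder\'on--Zygmund bounds are insufficient, and it is the combination of the optimal $L^1$-data estimate from \Cref{thm-L1data} with the precise matching of normalization constants between the fast-scale (\Cref{lem-we-difference}) and slow-scale (\Cref{lem-green2}) analyses that makes the argument go through.
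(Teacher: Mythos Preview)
Your overall strategy matches the paper's closely: subtract a multiple of the Green's function to cancel the leading singularity of $W^\varepsilon$, split the remainder into a ``good'' Dirichlet piece and a piece driven by concentrated forcing, and use \Cref{thm-L1data} for the latter. Two points need attention, though.

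First, the claim that $V^\varepsilon|_{\partial B_{\overline{a}^\varepsilon}}\to 0$ is not correct for general (anisotropic) $a_{ij}(0)$. On $\partial B_{\overline{a}^\varepsilon}$ one has $W^\varepsilon=\varepsilon^{-2}$, while $G(\cdot,0)\sim\Gamma_0$ there (with $\Gamma_0$ the fundamental solution of $L_0=a_{ij}(0)D_{ij}$), so $V^\varepsilon|_{\partial B_{\overline{a}^\varepsilon}}\approx \varepsilon^{-2}\bigl(1-\gamma_0\Gamma_0(z)\bigr)$ for $|z|=1$; this vanishes only when $\Gamma_0$ is radial. The paper therefore does \emph{not} rely on small boundary data: it extends the rescaled function $\overline{V}^\varepsilon$ across the unit hole by interpolating, via a cutoff in the fast variable $z$, to the explicit smooth extension $\overline{V}_0:=\overline{W}_0-\gamma_0\Gamma_0$, and then controls the resulting forcing $\overline{F}^\varepsilon\to\overline{F}_0$ in $L^p(B_2)$.

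Second, and more importantly, the $W^{1,q}$ bound from \Cref{thm-L1data} gives compactness for $V_1^\varepsilon$ but does \emph{not} by itself yield that the limit is zero or that $V_*$ is continuous at the origin (note $q<n/(n-1)$ never embeds into $C^0$). The missing ingredient is the flux identity
\[
\int_{B_R}\overline{F}_0\,dz=\int_{\partial B_1}a_{ij}(0)D_i\overline{W}_0\,\nu_j\,d\sigma-\gamma_0\int_{\partial B_1}a_{ij}(0)D_i\Gamma_0\,\nu_j\,d\sigma=0\qquad(R\ge 1),
\]
which is precisely the definition of $\gamma_0=\mathrm{cap}_{L_0}(B_1)$. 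Using this, the paper shows $V_1^\varepsilon(y)=-G(y,0)\int_{B_2}\overline{F}^\varepsilon\,dz+o(1)\to 0$ pointwise for each $y\neq 0$, hence $V_1\equiv 0$ and $V_*=V_2$ solves $a_{ij}D_{ij}V_*=\beta_*$ in the \emph{full} ball. Without this cancellation the limiting forcing could be a nonzero multiple of $\delta_0$, and $V_*$ would then be a multiple of the Green's function (still in $W^{1,q}$), destroying boundedness.

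Your uniqueness step is a legitimate variant: you take the difference $W_*^{(1)}-W_*^{(2)}$ (bounded, periodic, right-hand side $\beta_*^{(1)}-\beta_*^{(2)}$) and invoke the maximum principle on the torus, whereas the paper forms the $L$-harmonic combination $\beta_1 W^2-\beta_2 W^1$ (periodic, singular at lattice points) and uses the strong minimum principle on $\mathbb{R}^n\setminus\mathbb{Z}^n$. Both work, but yours requires the equation to extend across each lattice point, which in turn rests on the point above that $V_*$ solves the equation in all of $B_{1/2}$.
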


\begin{proof}
(i) Extensions and perturbations in the variable $z=y/\overline{a}^{\varepsilon}$: Let $\Gamma_0$ be the fundamental solution of $L_0=a_{ij}(0)D_{ij}$. We note that $\Gamma_0$ is homogeneous, that is, $\Gamma_0(tz)=t^{2-n}\Gamma_0(z)$ for any $t > 0$ and $z \in \mathbb{R}^n \setminus \{0\}$. Then we define
\begin{equation*}
    \overline{V}_0(z):=\overline{W}_0(z)-\gamma_0\Gamma_0(z)=\overline{W}_0(z)-\gamma_0\varepsilon^2\Gamma_0(\overline{a}^{\varepsilon}z),
\end{equation*}
where $\overline{W}_0$ is the $L_0$-capacity potential of $B_1$ as before and 
\begin{equation*}
    \gamma_0:=\mathrm{cap}_{L_0}(B_1)=-\int_{\partial B_1} a_{ij}(0)D_i\overline{W}_0 \nu_j \,d\sigma_z 
\end{equation*}
Here $\nu$ denotes the outer unit normal vector. Since $\overline{V}_0$ belongs to $C^{\infty}(\mathbb{R}^n \setminus B_1)$, we consider an extension $\overline{V}_0 \in C^2(\mathbb{R}^n)$. Such an extension is not unique, but we just choose one of them and call it $\overline{V}_0$. If we further define 
\begin{equation*}
   \overline{F}_0(z):=a_{ij}(0)D_{ij}\overline{V}_0,
\end{equation*}
then we observe that for $R \geq 1$,
\begin{equation} \label{f0-vanish}
\begin{aligned}
    \int_{B_R} \overline{F}_0 \,dz=\int_{B_1} \overline{F}_0 \,dz&= \int_{\partial B_1} a_{ij}(0) D_i \overline{V}_0 \nu_j\,d\sigma_z\\
    &=\int_{\partial B_1} a_{ij}(0) D_i \overline{W}_0 \nu_j\,d\sigma_z-\gamma_0\int_{\partial B_1} a_{ij}(0) D_i \Gamma_0 \nu_j\,d\sigma_z=0.
\end{aligned}
\end{equation}

Furthermore, we let $G(\cdot)=G(\cdot, 0)$ be the Green's function for $L=a_{ij}(\cdot)D_{ij}$ in $B_{1/2}$. We also define
\begin{equation*}
    \overline{H}^{\varepsilon}(z):=\varepsilon^2 G(\overline{a}^{\varepsilon}z)
\end{equation*}
and
\begin{equation*}
\overline{V}^{\varepsilon} (z) := \eta(z) (\overline W^\varepsilon (z) - \gamma_0 \overline{H}^\varepsilon(z)) + (1-\eta(z)) \overline{V}_0(z),
\end{equation*}
where $\eta\in C_c^{\infty}(\mathbb{R}^n)$ is a cut-off function such that $0 \leq \eta \leq 1$, $\eta =0$ in $B_1$, and $\eta=1$ in $\mathbb{R}^n \setminus B_2$. Then the function $\overline{V}^{\varepsilon}$ is well-defined in $B_{1/(2\overline{a}^{\varepsilon})}$. Moreover, 
since
\begin{equation*}
    \overline{V}^\varepsilon - \overline{V}_0 = \eta [ (\overline W^\varepsilon - \overline W_0) - \gamma_0 (\overline{H}^\varepsilon -\Gamma_0)],
\end{equation*}
we have for any $p \geq 1$, 
\begin{equation} \label{eq-vepsilon-difference}
	\| \overline{V}^\varepsilon - \overline{V}_0\|_{W^{2,p}(B_2)} \le C \{\| \overline W^\varepsilon - \overline W_0 \|_{W^{2,p}(B_2\setminus B_1)} + \|\overline{H}^\varepsilon -\Gamma_0 \|_{W^{2,p}(B_2\setminus B_1)}\}.
\end{equation}
We recall from \Cref{lem-we-difference} that if $R  \in [1, r_1/\overline{a}^{\varepsilon}]$, then
\begin{equation*}
\| \overline{W}^\varepsilon - \overline{W}_0 \|_{W^{2,p}(B_R\setminus B_1)} \le C \left(\varepsilon^2 + R^{2-n}+\phi(\overline{a}^{\varepsilon} R)\, R^2\right),
\end{equation*}
where $\phi(0+)=0$. In particular, we choose $R=R_{\varepsilon}>0$ so that $R_{\varepsilon}^4\cdot\phi(\overline{a}^{\varepsilon}R_{\varepsilon})=1$. Then it is easy to check that 
\begin{equation*}
    \lim_{\varepsilon \to 0}R_{\varepsilon}=\infty \quad \text{and} \quad \lim_{\varepsilon \to 0}\overline{a}^{\varepsilon}R_{\varepsilon}=0.
\end{equation*}
Therefore, it follows that
\begin{equation*}
\| \overline{W}^\varepsilon - \overline{W}_0 \|_{W^{2,p}(B_2\setminus B_1)} \leq \| \overline{W}^\varepsilon - \overline{W}_0 \|_{W^{2,p}(B_{R_{\varepsilon}}\setminus B_1)}\le C \left(\varepsilon^2 + R_{\varepsilon}^{2-n}+ R_{\varepsilon}^{-2}\right) \to 0
\end{equation*}
when $\varepsilon \to 0$.

On the other hand, by applying \Cref{lem-green2} and following the similar argument as in the proof of \Cref{lem-we-difference}, we obtain that 
\begin{equation*}
\| \overline{H}^{\varepsilon} -\Gamma_0  \|_{W^{2,p}(B_2\setminus B_1)} \to 0 \quad \text{when $\varepsilon \to 0$.}
\end{equation*}
By combining these estimates, we conclude that
\begin{equation}\label{eq-estimate-fe}
    \| \overline{V}^\varepsilon - \overline{V}_0\|_{W^{2,p}(B_2)}+\| \overline{F}^\varepsilon - \overline{F}_0\|_{L^p(B_2)} \to 0 \quad \text{when $\varepsilon \to 0$},
\end{equation}
where
\begin{equation*}
    \overline{F}^{\varepsilon}(z):=a_{ij}(\overline{a}^{\varepsilon}z) D_{ij}\overline{V}^{\varepsilon}(z).
\end{equation*}

(ii) Decomposition and convergence in the fast variable $y$: We now scale back to the fast variable $y$. Let $V^\varepsilon(y) = \varepsilon^{-2} \overline{V}^\varepsilon(y/\overline a^\varepsilon)$ and $F^\varepsilon(y) = (a^\varepsilon)^{-2} \overline{F}^\varepsilon(y/\overline a^\varepsilon) $ so that
\begin{equation*}
    a_{ij}(y)D_{ij}V^{\varepsilon}(y)=F^{\varepsilon}(y) \quad \text{for $y \in B_{1/2}$}.
\end{equation*}
It is easy to check that if $|z|>2$, then $\overline{V}^{\varepsilon}(z)=\overline{W}^{\varepsilon}(z)-\gamma_0 \varepsilon^2G(\overline{a}^{\varepsilon}z)$ and so $\overline{F}^{\varepsilon}(z)=(a^{\varepsilon})^2\beta^{\varepsilon}$. In other words, we have
\begin{equation*}
    F^{\varepsilon}(y)=\beta^{\varepsilon} \quad \text{for $|y|>2\overline{a}^{\varepsilon}$}.
\end{equation*}
Then by letting $G$ be the Green's function for $L$ in $B_{1/2}$ and recalling \Cref{rmk-green}, we can decompose $V^\varepsilon$ into two parts as $V^\varepsilon = V_1^\varepsilon + V_2^\varepsilon$, where  
\begin{equation*}
V_1^\varepsilon(y) :=  -\int_{B_{2\overline{a}^{\varepsilon}}} G(y, \tilde y) ( F^\varepsilon(\tilde y) -\beta^\varepsilon )  \, d\tilde y
\end{equation*}
and $V_2^{\varepsilon}$ is the unique solution of the Dirichlet problem
\begin{equation*}
\left\{ \begin{aligned} 
a_{ij}(y) D_{ij} V_2^\varepsilon &= \beta^{\varepsilon} && \text{in } B_{1/2} \\
V_2^\varepsilon &= V^\varepsilon = W^\varepsilon - \gamma_0 G(\cdot,0)  &&\text{on } \partial B_{1/2}.
\end{aligned} \right.
\end{equation*}

Let us now investigate limit behaviors of $V_i^{\varepsilon}$ when $\varepsilon \to 0$. In fact, since $W^\varepsilon$ has a uniform bound on every compact subset of $\mathbb R^n \setminus \mathbb Z^n$ due to \Cref{lem-correctorbound}, there exist a pair $(W, \beta)$ and a subsequence $\{\varepsilon_l\}_{l \in \mathbb{N}} \subset (0,1)$ (converging to $0$ when $l \to \infty$) such that 
\begin{equation*}
W^{\varepsilon_l} \to W\text{ locally uniformly in } \mathbb R^n \setminus \mathbb Z^n \text{ and } \beta^{\varepsilon_l} \to \beta \quad \text{when $l \to \infty$}.
\end{equation*}
First of all, due to the convergence of $W^{\varepsilon_l}$ and $\beta^{\varepsilon_l}$, it turns out that $V_2^{\varepsilon_l}$ converges uniformly in $B_{1/2}$ to $V_2 \in C(\overline{B_{1/2}})$, which solves 
\begin{equation*}
\left\{ \begin{aligned}
    a_{ij}(y) D_{ij} V_2 &=\beta &&\text{in } B_{1/2}\\
V_2 &= W - \gamma_0 G(\cdot,0) &&\text{on } \partial B_{1/2}.
\end{aligned}\right.
\end{equation*}
Moreover, by recalling \Cref{thm-L1data} and \eqref{eq-estimate-fe}, we observe that
\begin{equation*}
    \begin{aligned}
        \|V_1^{\varepsilon}\|_{W^{1, q}(B_{1/2})} &\leq C\int_{B_{2\overline{a}^{\varepsilon}}} \left|F^{\varepsilon}(y)-\beta^{\varepsilon}\right|\,dy\\
        &=C\int_{B_{2}} \left|(a^{\varepsilon})^{-2}\overline{F}^{\varepsilon}(z)-\beta^{\varepsilon}\right| (\overline{a}^{\varepsilon})^n\,dz\\
        &\leq C\int_{B_2} |\overline{F}_0(z)| \,dz+o(1) \leq C
    \end{aligned}
\end{equation*}
for any $1 \leq q <n/(n-1)$. Here the choice of $L^1$-data is optimal in the sense that if $p>1$, then $\|(F^{\varepsilon}-\beta^{\varepsilon})\chi_{B_{2\overline{a}^{\varepsilon}}}\|_{L^p(B_{1/2})}$ is not uniformly bounded with respect to $\varepsilon>0$. Therefore, $V_1^{\varepsilon}$ converges to some $V_1 \in W^{1, q}_0(B_{1/2})$ weakly in $W^{1, q}(B_{1/2})$ for $1<q \leq n/(n-1)$. 

On the other hand, for fixed $y \in B_{1/2} \setminus \{0\}$, it follows from \Cref{lem-green1} that for $\tilde{y} \in B_{2\overline{a}^{\varepsilon}}$,
\begin{equation*}
    0 \leq G(y, \tilde{y}) \leq C|y|^{2-n} \quad \text{and} \quad |G(y, 0)-G(y, \tilde{y})| \leq C\overline{a}^{\varepsilon}.
\end{equation*}
if $\varepsilon>0$ is sufficiently small. Thus, we obtain that 
\begin{equation*}
\begin{aligned}
V_1^\varepsilon(y) &= -\int_{B_{2\overline{a}^{\varepsilon}}} G(y, \tilde y) ( F^\varepsilon(\tilde y) -\beta^\varepsilon ) \, d\tilde y\\
&=-\int_{B_{2\overline{a}^{\varepsilon}}} G(y, 0) F^\varepsilon(\tilde y) \, d\tilde y+o(1)\\
&=-G(y, 0) \int_{B_2}  \overline{F}^\varepsilon(z) \, dz+o(1)\\
&=-G(y, 0) \left(\int_{B_2}  \overline{F}_0(z) \, dz + \int_{B_2} (\overline{F}^\varepsilon(z) - \overline{F}_0(z) )\, dz \right)+o(1)\\
&\rightarrow 0.
\end{aligned}
\end{equation*}
For the last convergence, we utilized the estimates \eqref{f0-vanish} and \eqref{eq-estimate-fe}. Hence, we conclude that $V_1^\varepsilon$ converges to $0$ and that $V_1 \equiv 0$.

(iii) Conclusion; uniqueness: We now recall from the definition of $V^{\varepsilon}$ that
\begin{equation*}
\begin{aligned}
    {V}^{\varepsilon} (y) &= \eta(y/\overline a^\varepsilon) ( W^\varepsilon (y) - \gamma_0 G(y)) + (1-\eta(y/\overline a^\varepsilon))(\varepsilon^{-2}\overline{W}_0(y/\overline a^\varepsilon)-\gamma_0\Gamma_0(y))\\
    &={V}_1^{\varepsilon} (y)+{V}_2^{\varepsilon} (y).
\end{aligned}
\end{equation*}
Let $(W^i, \beta_i)$, $i=1,2$, be two limits of $(W^\varepsilon, \beta^\varepsilon)$ up to  two different subsequences of $\varepsilon$. Without loss of generality, we may assume that $\beta_1>\beta_2$. By the convergence results obtained in the step (ii), we observe that for fixed $y \in B_{1/2} \setminus \{0\}$,
\begin{equation*}
    V_2^{1}(y)=W^1(y)-\gamma_0G(y) \quad \text{and} \quad  V_2^{2}(y)=W^2(y)-\gamma_0G(y),
\end{equation*}
where $V_2^i$ denotes the limits corresponding to $(W^{i}, \beta^i)$. It is easy to check that
\begin{enumerate}[(a)]
    \item $W^1$ and $W^2$ are $1$-periodic;
    
    \item $V_2^1, V_2^2 \in L^{\infty}(B_{1/2})$ and so  $\lim_{y \to 0}(\beta_1W^2(y)-\beta_2W^1(y))=\infty$;

    \item $a_{ij}(y)D_{ij}(\beta_1W^2(y)-\beta_2W^1(y))=0$ in $\mathbb{R}^n \setminus \mathbb{Z}^n$.
\end{enumerate}
Therefore, the minimum of $\beta_1W^2-\beta_2W^1$ must be attained at some point $y_0 \in Q_1 \setminus \{0\}$, which implies that $\beta_1W^2-\beta_2W^1$ is a constant by the strong minimum principle. It leads to a contradiction, and we conclude that $\beta_1=\beta_2$.
\end{proof}

\section{Proof of \texorpdfstring{\Cref{thm-main}}{Theorem 1.1} }\label{sec-homogenization}
The goal of this section is to complete the homogenization process by describing the convergence of $u_{\varepsilon}$ to $u$ in a suitable sense and by finding the homogenized equation satisfied by $u$. By scaling $W^{\varepsilon}$ found in \Cref{sec-correctors}, we have an $\varepsilon$-periodic function $w^{\varepsilon}$ satisfying
    \begin{equation*}
\left\{ \begin{aligned} 
a_{ij}(x/\varepsilon)D_{ij}w^{\varepsilon}(x)&=\beta^{\varepsilon} && \text{in $\mathbb{R}^n \setminus T_{a^{\varepsilon}}$} \\
w^{\varepsilon}&=1  && \text{on $\partial T_{a^{\varepsilon}}$}\\
\inf w^{\varepsilon}&=0.  &&
\end{aligned} \right.
\end{equation*}

 We now extend $w^{\varepsilon}$ to be $1$ in the holes $T_{a^{\varepsilon}}$ so that $w^{\varepsilon}$ becomes a continuous function defined in $\mathbb{R}^n$. Moreover, we show the following limiting property of $w^{\varepsilon}$, which is indeed related to the property $\inf W^{\varepsilon}=0$ in the fast variable.
\begin{lemma}\label{lem-convergence}
    The corrector $w^{\varepsilon}$	converges to $0$ ``away from holes" in the following sense:
    \begin{equation*}
        0 \leq \max_{\mathbb{R}^n \setminus T_{b^{\varepsilon}}} w^{\varepsilon}\leq C \varepsilon.
    \end{equation*}
\end{lemma}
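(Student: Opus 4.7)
The plan is to reduce the claim to an estimate for the $1$-periodic corrector $W^{\varepsilon}$ in the fast variable and then invoke \Cref{lem-correctorbound}.

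First I would translate to the fast variable. By construction, for $x \in \mathbb{R}^n \setminus T_{a^{\varepsilon}}$ one has $w^{\varepsilon}(x) = \varepsilon^2 W^{\varepsilon}(x/\varepsilon)$, and the change of variables $y = x/\varepsilon$ sends $\mathbb{R}^n \setminus T_{b^{\varepsilon}}$ to $\mathbb{R}^n \setminus \overline{T}_{b^{\varepsilon}/\varepsilon}$ with intermediate scale
\begin{equation*}
\frac{b^{\varepsilon}}{\varepsilon} = \sqrt{\frac{a^{\varepsilon}}{\varepsilon}} = \sqrt{\overline{a}^{\varepsilon}}.
\end{equation*}
Since $a^{\varepsilon} < b^{\varepsilon}$ for $\varepsilon$ small, the extension of $w^{\varepsilon}$ by $1$ inside the holes does not affect its values on $\mathbb{R}^n \setminus T_{b^{\varepsilon}}$. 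By the $1$-periodicity of $W^{\varepsilon}$ it therefore suffices to control $\max_{Q_1 \setminus B_{\sqrt{\overline{a}^{\varepsilon}}}} W^{\varepsilon}$.

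For the lower bound, $w^{\varepsilon} \geq 0$ everywhere: on $\mathbb{R}^n \setminus T_{a^{\varepsilon}}$ this follows from the normalization $\inf W^{\varepsilon} = 0$ built into \Cref{cor-critical}, and on the holes $w^{\varepsilon} = 1$ by definition of the extension. For the upper bound I would split $Q_1 \setminus B_{\sqrt{\overline{a}^{\varepsilon}}}$ into the inner annulus $B_{r_1} \setminus B_{\sqrt{\overline{a}^{\varepsilon}}}$ and the outer region $Q_1 \setminus B_{r_1}$, where $r_1 \in (0,1/2)$ is the universal radius from \Cref{lem-correctorbound}. On the outer region, part (i) of that lemma gives $W^{\varepsilon} \leq C$. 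On the inner annulus, part (ii) together with the critical scaling $\overline{a}^{\varepsilon} = \varepsilon^{2/(n-2)}$ yields
\begin{equation*}
W^{\varepsilon}(y) \leq C_2 |y|^{2-n} \leq C_2 \bigl(\sqrt{\overline{a}^{\varepsilon}}\bigr)^{2-n} = C_2 \,\varepsilon^{-1}.
\end{equation*}
Combining both cases gives $W^{\varepsilon} \leq C\varepsilon^{-1}$ on $Q_1 \setminus B_{\sqrt{\overline{a}^{\varepsilon}}}$, and undoing the scaling produces $w^{\varepsilon} \leq \varepsilon^2 \cdot C\varepsilon^{-1} = C\varepsilon$ throughout $\mathbb{R}^n \setminus T_{b^{\varepsilon}}$.

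No step is truly delicate here; the exponent arithmetic is precisely what makes $b^{\varepsilon}$ the natural intermediate scale on which $w^{\varepsilon}$ becomes small. The real analytic work has already been absorbed into \Cref{lem-correctorbound}, where the sharp two-sided bound of order $|y|^{2-n}$ was obtained by Green's function comparison around each singular point; the present lemma is essentially a scaling computation on top of it.
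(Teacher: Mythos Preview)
Your proposal is correct and follows essentially the same approach as the paper: both reduce to the pointwise bound $W^{\varepsilon}(y) \leq C|y|^{2-n}$ from \Cref{lem-correctorbound} and then evaluate at the intermediate radius $b^{\varepsilon}/\varepsilon = \sqrt{\overline{a}^{\varepsilon}} = \varepsilon^{1/(n-2)}$, the only cosmetic difference being that the paper scales first and computes $\varepsilon^n |x-\varepsilon k|^{2-n}$ in the slow variable while you stay in the fast variable and multiply by $\varepsilon^2$ at the end. Your explicit split into the inner annulus and the outer region $Q_1 \setminus B_{r_1}$ makes transparent what the paper leaves implicit when it asserts the bound on all of $Q_1(k) \setminus B_{\overline{a}^{\varepsilon}}(k)$.
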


\begin{proof}
An application of \Cref{lem-correctorbound} yields the estimate 
\begin{equation*}
    0\leq W^{\varepsilon}(y) \leq C|y-k|^{2-n},
\end{equation*}
where $y \in Q_1(k) \setminus B_{\overline{a}^{\varepsilon}}(k)$ for some $k \in \mathbb{Z}^n$. Then by scaling back, we have 
\begin{equation*}
    0 \leq w^{\varepsilon}(x) \leq C\varepsilon^n|x-\varepsilon k|^{2-n}.
\end{equation*}
Therefore, if $x \in \mathbb{R}^n \setminus T_{b^{\varepsilon}}$, then $|x-\varepsilon k| \geq \varepsilon^{\frac{n-1}{n-2}}$ and so the desired convergence follows.
\end{proof}

We now recall the uniform convergence up to subsequence obtained in \Cref{thm-convergence}. To be precise, we take a sequence $\{\varepsilon_m\}_{m \in \mathbb{N}}$ such that $\varepsilon_m \to 0$ and $\underline{u}_{\varepsilon_m} \to {u}$ uniformly in $\Omega$ as $m \to \infty$, where
\begin{equation*}
		\underline{u}_{\varepsilon}(x)=
		\begin{cases}
			u_{\varepsilon}(x) & \text{if $x \in Q_{\varepsilon}(\varepsilon k) \setminus B_{b^{\varepsilon}}(\varepsilon k)$ for some $k \in \mathbb{Z}^n$}\\
			\min_{\partial B_{b^{\varepsilon}}(\varepsilon k)} u_{\varepsilon} & \text{if $x \in B_{b^{\varepsilon}}(\varepsilon k)$ for some $k \in \mathbb{Z}^n$}.
	\end{cases}
	\end{equation*}
 Then by the definition of $\underline{u}_{\varepsilon}$, for any $x \in \Omega$, there exists a sequence $\{x_m\} \subset \Omega \setminus T_{b^{\varepsilon_m}}$ such that $x_m \to x$ and 
 \begin{equation*}
     {u}_{\varepsilon_m}(x_m)=\underline{u}_{\varepsilon_m}(x_m) \to {u}(x).
 \end{equation*}
 We note that the uniqueness of ${u}$ is unknown yet, i.e., ${u}$ may depend on the choice of $\varepsilon_m$. Indeed, our goal is to prove ${u}$ satisfies the homogenized equation stated in \Cref{thm-main} and so the uniqueness is guaranteed by the unique solvability of the homogenized problem.
	
	We are finally ready to prove \Cref{thm-main}. In the proof, we write $w_2^{\varepsilon}=w^{\varepsilon}$ to emphasize that it corresponds to the second part of the corrector.
	\begin{proof}[Proof of \Cref{thm-main}]
    (i) We first suppose by contradiction that ${u}$ is not a viscosity supersolution of \eqref{eq-homo}. In other words, there exists a quadratic polynomial $P$ touching $u$ from below at the origin (for simplicity) such that there exists $R>0$ satisfying $u(0)=P(0)$, $u\geq P$ in $B_{R}$, and 
		\begin{equation}\label{eq-homoeq}
			\overline{a}_{ij}D_{ij}P+\beta_0(\varphi(0)-P\\
			(0))_+ \geq (1+\beta_0)\eta >0 \quad \text{for some $\eta>0$}.
		\end{equation}
        Since $\varphi$ and $u$ are continuous in $\Omega$, we may assume that 
        \begin{equation}\label{eq-osc}
            \osc_{B_R} \, (\varphi-u) <\eta/2
        \end{equation}
        by choosing smaller $R>0$ if necessary.
  
		We then fix small $R>0$ and let
		\begin{equation*}
			\widetilde{P}(x):=P(x)-\frac{K}{2}\left(|x|^2-\frac{R^2}{2}\right), \quad\text{where $K=K(\eta) \in (0, 1)$ to be determined}
		\end{equation*}
		and 
		\begin{equation*}
			\widetilde{P}_m(x):=\widetilde{P}(x)+\varepsilon_m^2 w_{1, D^2\widetilde{P}}(x/\varepsilon_m)+((\varphi(0)-u(0))_+-\eta)w_2^{\varepsilon_m}(x).
		\end{equation*}
        
        Moreover, we choose a domain  
        \begin{equation*}
            D_{R, m}:=B_R \setminus \left(\bigcup_{k \in \mathcal{I}_{R, m}} B_{b^{\varepsilon_m}}(\varepsilon_m k) \right),
        \end{equation*}
        where 
        \begin{equation*}
            \mathcal{I}_{R, m}:=\{k \in \mathbb{Z}^n: \partial B_R \cap B_{b^{\varepsilon_m}}(\varepsilon_m k) \neq \varnothing\}.
        \end{equation*}
        We remark that the domain $D_{R, m}$ is perturbed from $B_R$ so that $\partial D_{R, m} \subset \Omega \setminus T_{b^{\varepsilon_m}}$ and $\partial D_{R, m} \subset B_{R} \setminus B_{3R/4}$ (if $m$ is sufficiently large).

        We claim that $\widetilde{P}_m$ is a viscosity subsolution in $D_{R, m} \setminus T_{a^{\varepsilon_m}}$ which is smaller than $u_{\varepsilon_m}$ on its boundary. For this purpose, by applying \Cref{lem:periodic}, \Cref{lem:periodic2} and recalling \eqref{eq-homoeq}, we observe that
		\begin{equation*}
			\begin{aligned}
				&a_{ij}(x/\varepsilon_m)D_{ij}\widetilde{P}_m(x)\\
				&=a_{ij}(x/\varepsilon_m)D_{ij}\widetilde{P}+a_{ij}(x/\varepsilon_m) (D_{ij}w_{1, D^2\widetilde{P}})(x/\varepsilon_m)+((\varphi(0)-u(0))_+-\eta)a_{ij}(x/\varepsilon_m)D_{ij}w_2^{\varepsilon_m}(x)\\
				&=\overline{a}_{ij}D_{ij}\widetilde{P}+\beta^{\varepsilon_m}(\varphi(0)-u(0))_+-\beta^{\varepsilon_m}\eta\\
				&\geq\overline{a}_{ij}D_{ij}\widetilde{P}-n\Lambda K+\beta^{\varepsilon_m}(\varphi(0)-u(0))_+-\beta^{\varepsilon_m}\eta\\
				& \geq \eta/2-n\Lambda K>0 \quad \text{in $\Omega \setminus T_{a^{\varepsilon_m}}$}.
			\end{aligned}
		\end{equation*}
		For the inequalities above, we choose sufficiently small $K>0$ and large $m$.
		
		On the other hand, for $x \in \partial T_{a^{\varepsilon_m}} \cap D_{R, m}$, we choose the unique $k \in \mathbb{Z}^n$ such that $x \in B_{b^{\varepsilon_m}}(\varepsilon_m k)$. Then we have
		\begin{equation*}
			\begin{aligned}
				\widetilde{P}_m(x) &\leq P(x)-\frac{K}{2}\left(|x|^2-\frac{R^2}{2}\right)+\varepsilon_m^2\|w_{1, D^2\widetilde{P}}\|_{{\infty}}+(\varphi(0)-u(0))_+-\eta \\
				&\leq u(x)+\frac{KR^2}{4}+\varepsilon_m^2\|w_{1, D^2\widetilde{P}}\|_{{\infty}}+(\varphi(0)-u(0))_+-\eta\\
				&\leq u(x)+(\varphi(0)-u(0))_+-\eta/2,
			\end{aligned}
		\end{equation*}		
        where we choose $m$ large enough and $K$ small enough. In the case of $\varphi(0) \geq u(0)$, the oscillation control \eqref{eq-osc} implies that
        \begin{equation*}
            \widetilde{P}_m(x) \leq \varphi(x) \leq u_{\varepsilon_m}(x).
        \end{equation*}
       In the case of $\varphi(0)<u(0)$, an application of the minimum principle to a supersolution $u_{\varepsilon_m}$ shows that
       \begin{equation*}
           u_{\varepsilon_m}(x) \geq \min_{\partial B_{b^{\varepsilon_m}}(\varepsilon_m k)} u_{\varepsilon_m}=\underline{u}_{\varepsilon_m}(x).
       \end{equation*}
       Since $\underline{u}_{\varepsilon_m}(x)$ converges to $u(x)$, we also have
       \begin{equation*}
           \widetilde{P}_m(x) \leq u(x)-\eta/2 \leq u_{\varepsilon_m}(x).
       \end{equation*}
       Moreover, for $x \in \partial D_{R, m}$, a similar argument gives that 
		\begin{equation*}
			\begin{aligned}
				\widetilde{P}_m(x) &\leq P(x)-\frac{KR^2}{32}+\varepsilon_m^2\|w_{1, D^2\widetilde{P}}\|_{{\infty}}+((\varphi(0)-u(0))_+-\eta)w_2^{\varepsilon_m}(x)\\
				&\leq u(x)-\frac{KR^2}{64}+((\varphi(0)-u(0))_+-\eta)w_2^{\varepsilon_m}(x).
			\end{aligned}
		\end{equation*}
        Since we choose $D_{R, m}$ so that $\partial D_{R, m} \subset \Omega \setminus T_{b^{\varepsilon_m}}$, we can apply \Cref{lem-convergence} to obtain that $w_2^{\varepsilon_m}(x)$ can be sufficiently small. Therefore, we conclude that
        \begin{equation*}
            \widetilde{P}_m(x) \leq \underline{u}_{\varepsilon_m}(x)={u}_{\varepsilon_m}(x),
        \end{equation*}
        which finishes the proof of the claim. We are now able to apply the comparison principle between $\widetilde{P}_m$ and $u_{\varepsilon_m}$ in $D_{R, m} \setminus T_{a^{\varepsilon_m}}$. We recall that there exists a sequence $\{x_m\} \subset \Omega \setminus T_{b^{\varepsilon_m}}$ such that $x_m \to 0$ and 
        \begin{equation*}
            u_{\varepsilon_m}(x_m)=\underline{u}_{\varepsilon_m}(x_m) \to u(0).
        \end{equation*}
        Thus, by taking the point $x_m$ in the inequality from the comparison principle, we have
		\begin{equation*}
			\widetilde{P}_m(x_m) \leq u_{\varepsilon_m}({x}_m)
		\end{equation*}
		or equivalently,
		\begin{equation*}
				P(x_m)+\frac{KR^2}{8}+\varepsilon_m^2 w_{1, D^2\widetilde{P}}(x_m/\varepsilon_m)+((\varphi(0)-u(0))_+-\eta)w_2^{\varepsilon_m}(x_m) \leq u_{\varepsilon_m}(x_m).
		\end{equation*}
        Letting $m \to \infty$ together with \Cref{lem-convergence}, we arrive at 
        \begin{equation*}
            P(0)+\frac{KR^2}{8} \leq u(0),
        \end{equation*}
        which leads to the contradiction. 

        (ii) We next suppose by contradiction that $u$ is not a viscosity subsolution of \eqref{eq-homo}. In other words, there exists a quadratic polynomial $P$ touching $u$ from above at the origin such that there exists $R>0$ satisfying $u(0)=P(0)$, $u \leq P$ in $B_{R}$, and 
		\begin{equation}\label{eq-homoeq2}
			\overline{a}_{ij}D_{ij}P+\beta_0(\varphi(0)-P\\
			(0))_+ \leq -(1+\beta_0)\eta <0 \quad \text{for some $\eta>0$}.
		\end{equation}
        As in the step (i), we fix $R>0$ satisfying \eqref{eq-osc} and choose the domain $D_{R, m}$. We then let
		\begin{equation*}
			\widetilde{P}(x):=P(x)+\frac{K}{2}\left(|x|^2-\frac{R^2}{2}\right), \quad\text{where $K=K(\eta) \in (0, 1)$ to be determined}
		\end{equation*}
		and 
		\begin{equation*}
			\widetilde{P}_m(x):=\widetilde{P}(x)+\varepsilon_m^2 w_{1, D^2\widetilde{P}}(x/\varepsilon_m)+((\varphi(0)-u(0))_++\eta)w_2^{\varepsilon_m}(x).
		\end{equation*}
        In a similar way as in the step (i), we observe that
		\begin{equation*}
			\begin{aligned}
				&a_{ij}(x/\varepsilon_m)D_{ij}\widetilde{P}_m(x)\\
				&=a_{ij}(x/\varepsilon_m)D_{ij}\widetilde{P}+a_{ij}(x/\varepsilon_m) (D_{ij}w_{1, D^2\widetilde{P}})(x/\varepsilon_m)+((\varphi(0)-u(0))_++\eta)a_{ij}(x/\varepsilon_m)D_{ij}w_2^{\varepsilon_m}(x)\\
				&=\overline{a}_{ij}D_{ij}\widetilde{P}+\beta^{\varepsilon_m}(\varphi(0)-u(0))_++\beta^{\varepsilon_m}\eta\\
				&\leq\overline{a}_{ij}D_{ij}P+n\Lambda K+\beta^{\varepsilon_m}(\varphi(0)-u(0))_++\beta^{\varepsilon_m}\eta\\
				& \leq -\eta/2+n\Lambda K<0 \quad \text{in $\Omega \setminus T_{a^{\varepsilon_m}}$}.
			\end{aligned}
		\end{equation*}

        On the other hand, for $x \in T_{a^{\varepsilon_m}} \cap D_{R, m}$, we have
		\begin{equation*}
			\begin{aligned}
				\widetilde{P}_m(x) &\geq P(x)+\frac{K}{2}\left(|x|^2-\frac{R^2}{2}\right)-\varepsilon_m^2\|w_{1, D^2\widetilde{P}}\|_{{\infty}}+(\varphi(0)-u(0))_++\eta \\
				&\geq u(x)-\frac{KR^2}{4}-\varepsilon_m^2\|w_{1, D^2\widetilde{P}}\|_{{\infty}}+(\varphi(0)-u(0))_++\eta\\
				&\geq u(x)+(\varphi(0)-u(0))+\eta/2,
			\end{aligned}
		\end{equation*}		
        where we choose $m$ large enough and $K$ small enough. Hence, the oscillation control \eqref{eq-osc} implies that $\widetilde{P}_m(x) \geq \varphi(x)$ for $x \in T_{a^{\varepsilon_m}} \cap D_{R, m}$.
        
        Moreover, for $x \in \partial D_{R, m}$, a similar argument gives that 
		\begin{equation*}
			\begin{aligned}
				\widetilde{P}_m(x) &\geq P(x)+\frac{KR^2}{32}-\varepsilon_m^2\|w_{1, D^2\widetilde{P}}\|_{{\infty}}\\
				&\geq u(x)+\frac{KR^2}{64}> \underline{u}_{\varepsilon_m}(x)={u}_{\varepsilon_m}(x).
			\end{aligned}
		\end{equation*}
        Let us now define the function
        \begin{equation*}
            v_{\varepsilon_m}:=
            \begin{cases}
                \min\{u_{\varepsilon_m}, \widetilde{P}_m\} & \text{in $D_{R, m}$}\\
                u_{\varepsilon_m} & \text{in $\Omega \setminus D_{R, m}$}.
            \end{cases}
        \end{equation*}
       In view of the previous observations, $v_{\varepsilon_m}$ is a well-defined viscosity supersolution of \eqref{problem-Le}. Since $u_{\varepsilon_m}$ is the least viscosity supersolution among such functions, we obtain that
       \begin{equation*}
           u_{\varepsilon_m} \leq v_{\varepsilon_m} \leq \widetilde{P}_m \quad \text{in $D_{R, m}$}.
       \end{equation*}
       By taking the point $x_m$ as in (i) and letting $m \to \infty$, we conclude that
       \begin{equation*}
           u(0) \leq P(0)-\frac{KR^2}{8},
       \end{equation*}
       which leads to the contradiction.
		
		(iii) In conclusion, $u$ is a viscosity solution of \eqref{eq-homo}. Since the Dirichlet problem admits at most one solution from the comparison principle, a limit function $u$ is indeed independent of the choice of a subsequence $\varepsilon_m$. The $L^p$-convergence of $u_{\varepsilon}$ immediately follows from the estimate
    \begin{equation*}
        \int_{\Omega}|u_{\varepsilon}-\underline{u}_{\varepsilon}|^p \,dx = \int_{T_{b^{\varepsilon}}}|u_{\varepsilon}-\underline{u}_{\varepsilon}|^p \,dx \leq C (b^{\varepsilon})^n \cdot \varepsilon^{-n}=C\varepsilon^{\frac{n}{n-2}}.
    \end{equation*}
	\end{proof}

We present a short remark on the critical value $\beta_0$.
\begin{remark}[Interpretation of $\beta_0$]
    In the case of $L=\Delta$, the critical value $\beta_0$ coincides with the capacity $\gamma_0=\mathrm{cap}_{\Delta}(B_1)$; see \cite{CL08, CM82a, CM82b} for instance. Thus, the natural question is to characterize the critical value $\beta_0$ in our variable coefficient setting. 
    
    We would like to point out that, in \Cref{sec-correctors}, the capacity $\gamma_0=\mathrm{cap}_{a_{ij}(0)}(B_1)$ played a crucial role in capturing the asymptotic behavior of the scaled corrector $\overline{W}^{\varepsilon}$. Nevertheless, it turns out that $\beta_0$ does not coincide with the quantity $\gamma_0$ in general.  To be precise, suppose that $a_{ij}(\cdot)$ is $1$-periodic and belongs to $C^2(\overline{Q_1})$. 
	Then by applying the integration by parts twice, together with the periodicity of $W^{\varepsilon}$ and $a_{ij}$, we find that
	\begin{equation}\label{eq-limitbeta}
		\begin{aligned}
		|Q_1 \setminus B_{\overline{a}^{\varepsilon}}| \cdot \beta^{\varepsilon}&=\int_{Q_1 \setminus B_{\overline{a}^{\varepsilon}}} a_{ij}(y) D_{ij}W^{\varepsilon}(y) \,dy\\
		&=-\int_{Q_1 \setminus B_{\overline{a}^{\varepsilon}}} D_ja_{ij} D_{i}W^{\varepsilon}+\underbrace{\int_{\partial Q_1}a_{ij}D_iW^{\varepsilon} \nu_j}_{=0}+\int_{\partial B_{\overline{a}^{\varepsilon}}}a_{ij}D_iW^{\varepsilon} \nu_j\\
		&=\underbrace{\int_{Q_1 \setminus B_{\overline{a}^{\varepsilon}}} D_{ij}a_{ij} W^{\varepsilon}}_{=:I_1}-\underbrace{\int_{\partial Q_1}D_ja_{ij}W^{\varepsilon} \nu_i}_{=0}-\underbrace{\int_{\partial B_{\overline{a}^{\varepsilon}}}D_ja_{ij}W^{\varepsilon} \nu_i}_{=:I_2}+\underbrace{\int_{\partial B_{\overline{a}^{\varepsilon}}}a_{ij}D_iW^{\varepsilon} \nu_j}_{=:I_3}.
		\end{aligned}
	\end{equation}
Since $W^{\varepsilon}=\varepsilon^{-2}$ on $\partial B_{\overline{a}^{\varepsilon}}$, we have
\begin{equation*}
	\begin{aligned}
		I_2=\varepsilon^{-2}\int_{\partial B_{\overline{a}^{\varepsilon}}}D_ja_{ij} \nu_i=\varepsilon^{-2}\left[\int_{Q_1 \setminus B_{\overline{a}^{\varepsilon}}} D_{ij}a_{ij} - \int_{\partial Q_1}D_ja_{ij}\nu_i\right]=\varepsilon^{-2}\int_{Q_1 \setminus B_{\overline{a}^{\varepsilon}}} D_{ij}a_{ij}.
	\end{aligned}
\end{equation*}
The periodicity of $a_{ij} \in C^2$ again yields that 
\begin{equation*}
	|I_2|=\varepsilon^{-2}\left|\int_{Q_1 \setminus B_{\overline{a}^{\varepsilon}}} D_{ij}a_{ij}\right|=\varepsilon^{-2}\left|\int_{B_{\overline{a}^{\varepsilon}}} D_{ij}a_{ij}\right| \lesssim \varepsilon^{\frac{4}{n-2}} \to 0 \quad \text{when $\varepsilon \to 0$}.
\end{equation*}
We next claim that
\begin{equation*}
	\lim_{\varepsilon \to 0}I_3=\gamma_0.
\end{equation*}
If we recall a rescaled function $\overline{W}^{\varepsilon}(z)=\varepsilon^2 W^{\varepsilon}(\overline{a}^{\varepsilon}z)$, then we may write
	\begin{equation*}
		\int_{\partial B_{\overline{a}^{\varepsilon}}}a_{ij}(y)D_iW^{\varepsilon}(y) \nu_j \,d\sigma_y=\int_{\partial B_{1}}a_{ij}(\overline{a}^{\varepsilon}z)D_i\overline{W}^{\varepsilon}(z) \nu_j \,d\sigma_z.
	\end{equation*}
It also follows from the proof of \Cref{thm-uniquebeta} that for any $\alpha \in (0,1)$,
\begin{equation*}
	\|\overline{W}^{\varepsilon}-\overline{W}_0\|_{C^{1, \alpha}(\overline{B_2 \setminus B_{1}})}=o(1)
\end{equation*}	
and so $\|\overline{W}^{\varepsilon}\|_{C^{1, \alpha}(\overline{B_2 \setminus B_{1}})}$ is uniformly bounded with respect to $\varepsilon>0$. Thus, we conclude that
\begin{equation*}
	\begin{aligned}
			\left|\int_{\partial B_{\overline{a}^{\varepsilon}}}a_{ij}(y)D_iW^{\varepsilon}(y) \nu_j \,d\sigma_y-\gamma_0 \right| &\leq \int_{\partial B_{1}}|a_{ij}(\overline{a}^{\varepsilon}z)-a_{ij}(0)| |D_i\overline{W}^{\varepsilon}(z)| \,d\sigma_z\\
			&\quad+\int_{\partial B_{1}}|a_{ij}(0)| |D_i\overline{W}^{\varepsilon}(z)-D_i\overline{W}_0(z)|  \,d\sigma_z\\
			&\to 0 \quad \text{when $\varepsilon \to 0$}. 
	\end{aligned}
\end{equation*}
It only remains to investigate the term $I_1$. The limit $\lim_{\varepsilon \to 0}I_1$ is well-defined, since all other terms in \eqref{eq-limitbeta} have limits when $\varepsilon \to 0$.

We would like to provide a special coefficient matrix $a_{ij}(\cdot)$ for which $\lim_{\varepsilon \to 0}I_1 \neq 0$. For $\delta \in (0, 1/8)$ to be determined soon, we define a cut-off function $g_{\delta} \in C_c^{\infty}(B_{\delta})$ such that $g_{\delta} =1$ in $B_{\delta/2}$ and $0 \leq g_{\delta} \leq 1$ in $B_{\delta}$. We let the $1$-periodic function $\psi \in C^{\infty}(\mathbb{R}^n)$ solve
\begin{equation*}
	\Delta \psi(y)=g_{\delta}(y)-g_{\delta}(y-e_1/4) \quad \text{in $Q_1$}
\end{equation*}
with the condition $\inf_{\mathbb{R}^n} \psi=0$. We note that $\|\psi\|_{L^{\infty}}$ is uniformly bounded with respect to $\delta>0$. Then we define a coefficient matrix $a_{ij}(\cdot)$ by
\begin{equation*}
	a_{ij}(y):=(2+\psi(y))\delta_{ij}.
\end{equation*}
It is immediate to check that $a_{ij}(\cdot)$ is uniformly elliptic, $1$-periodic, smooth, and
\begin{equation*}
	D_{ij}a_{ij}=\Delta \psi.
\end{equation*}
Moreover, in view of \Cref{lem-correctorbound}, we observe that:

(i) there exist universal constants $c>0$ and $r_1 >0$ such that
\begin{equation*}
	W^{\varepsilon}(y) \geq c|y|^{2-n} \quad \text{for $\overline{a}^{\varepsilon} \leq |y| \leq r_1$};
\end{equation*}

(ii) $0 \leq W^{\varepsilon}(y) \leq C$ for $y \in Q_1 \setminus B_{3/8}$, where $C>0$ is independent of $\varepsilon>0$.

Therefore, it turns out that
\begin{equation*}
	\begin{aligned}
		\int_{Q_1 \setminus B_{\overline{a}^{\varepsilon}}} D_{ij}a_{ij} W^{\varepsilon}= \int_{Q_1 \setminus B_{\overline{a}^{\varepsilon}}} \Delta \psi \cdot W^{\varepsilon}&\geq \int_{B_{\delta/2} \setminus B_{\overline{a}^{\varepsilon}}} W^{\varepsilon}-\int_{B_{\delta}(e_1/4)}  W^{\varepsilon} \\
		&\geq C_1\delta^2-C_2\delta^n>0,
	\end{aligned}
\end{equation*}
if $\delta$ is chosen to be sufficiently small.
To this end, the limit of $\beta^{\varepsilon}$ need not coincide with the value $\beta_0$ in general. Note that we have $\beta_0=\gamma_0$ provided that $a_{ij}(\cdot) \in C^2(\overline{Q_1})$ and $\lim_{\varepsilon \to 0}I_1=0$; in particular, such conditions hold when $a_{ij}(\cdot)\equiv a_{ij}(0)$.

In fact, we conjecture that the critical value $\beta_0$ in our framework coincides with $\mathrm{cap}_{\overline{a}_{ij}}(B_1)$ that is the capacity for the homogenized operator.

\end{remark}

\section{Homogenization for non-critical hole sizes}\label{sec-noncritical}
In this last section, we provide homogenization results for non-critical hole sizes by utilizing the corrector $W^{\varepsilon}$ constructed in \Cref{sec-correctors}. To be precise, we now consider holes $T_{a^{\varepsilon^\alpha}}$ with non-critical size
\begin{equation*}
	{a^{\varepsilon^\alpha}}=\varepsilon^{\frac{n}{n-2}\alpha} \quad \text{for some $0<\alpha \neq 1$}.
\end{equation*}
We note that if we let
\begin{equation*}
    \overline{\alpha}:=\frac{n}{2}\alpha-\frac{n-2}{2},
\end{equation*}
then we have the relation
\begin{equation*}
    \overline a^{\varepsilon^{\overline \alpha}}=a^{\varepsilon^{\overline \alpha}}/\varepsilon^{\overline \alpha}=a^{\varepsilon^\alpha} / \varepsilon.
\end{equation*}
By \Cref{cor-critical}, $W^{\varepsilon^{\overline \alpha}}$ satisfies
\begin{equation}\label{eq-for-W-ep-alpha-large-hole}
	\left\{ \begin{aligned} 
		a_{ij}(y)D_{ij}W^{\varepsilon^{\overline \alpha}}(y)&=\beta^{\varepsilon^{\overline \alpha}} && \text{in $\mathbb{R}^n \setminus \overline{T}_{\overline{a}^{\varepsilon^{\overline \alpha}}}$} \\
		W^{\varepsilon^{\overline \alpha}}&=\varepsilon^{-2\overline \alpha}  && \text{on $\partial \overline{T}_{\overline{a}^{\varepsilon^{\overline \alpha}}}$}\\
		\inf W^{\varepsilon^{\overline \alpha}}
&=0  && 
	\end{aligned} \right.
\end{equation}
We first consider the case of $\alpha>1$, that is, when the size of holes is smaller than the critical one. We define correctors by 
\begin{equation}\label{def-of-corrector-from-W-ep-in-small-hole}
\widehat W^\varepsilon(y) := \varepsilon^{2(\overline \alpha-1)}W^{\varepsilon^{\overline \alpha}}(y) \quad \text{and} \quad
\widehat w^\varepsilon(x) := \varepsilon^2 \widehat W^\varepsilon(x/\varepsilon).
\end{equation}
Then by \eqref{eq-for-W-ep-alpha-large-hole}, \eqref{def-of-corrector-from-W-ep-in-small-hole}, and \Cref{lem-convergence}, we have the following results. 
\begin{lemma}\label{lem-sub-critical-correctibility-condition}
Let $\alpha>1$. Then the following hold:
\begin{enumerate}[(i)]
\item $a_{ij}(x/\varepsilon) D_{ij} \widehat w^\varepsilon(x) = \varepsilon^{2(\overline{\alpha}-1)} \beta^{\varepsilon^{\overline{\alpha}}} =:\widehat \beta^\varepsilon \rightarrow 0$ as $\varepsilon \to 0$.

\item $\widehat w^\varepsilon = 1$ on $\partial T_{a^{\varepsilon^{\alpha}}}$.

\item $\inf \widehat w^\varepsilon =0$.

\item $\max_{\mathbb{R}^n \setminus T_{\sqrt{a^{\varepsilon^{\alpha} }\varepsilon} }} \widehat w^{\varepsilon}=o(1)$.
\end{enumerate}
\end{lemma}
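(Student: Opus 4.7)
The plan is to verify each of (i)--(iv) by directly unwinding the definitions \eqref{def-of-corrector-from-W-ep-in-small-hole}, substituting into the equation \eqref{eq-for-W-ep-alpha-large-hole} satisfied by $W^{\varepsilon^{\overline\alpha}}$, and then invoking the uniform estimates already established for the critical periodic corrector. Combining the two parts of \eqref{def-of-corrector-from-W-ep-in-small-hole} yields the compact identity $\widehat w^\varepsilon(x) = \varepsilon^{2\overline\alpha} W^{\varepsilon^{\overline\alpha}}(x/\varepsilon)$, which will be the workhorse throughout. It will also be useful to record that the hypothesis $\alpha>1$ translates into $\overline\alpha = \tfrac{n}{2}\alpha-\tfrac{n-2}{2}>1$, and that the identity $\overline a^{\varepsilon^{\overline\alpha}}=a^{\varepsilon^\alpha}/\varepsilon$ converts the fast-variable hole pattern into the slow-variable one.

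For (i), I would chain-rule: $D_{ij}\widehat w^\varepsilon(x)=\varepsilon^{2\overline\alpha-2}(D_{ij}W^{\varepsilon^{\overline\alpha}})(x/\varepsilon)$, then feed this into \eqref{eq-for-W-ep-alpha-large-hole} to get $a_{ij}(x/\varepsilon)D_{ij}\widehat w^\varepsilon(x)=\varepsilon^{2(\overline\alpha-1)}\beta^{\varepsilon^{\overline\alpha}}$. The vanishing as $\varepsilon\to 0$ then follows from the universal bound $\beta^{\varepsilon^{\overline\alpha}}\le C$ in \Cref{cor-critical} together with $\overline\alpha>1$. For (ii), on $\partial T_{a^{\varepsilon^\alpha}}$ we have $x/\varepsilon\in \partial \overline T_{\overline a^{\varepsilon^{\overline\alpha}}}$, so the boundary condition in \eqref{eq-for-W-ep-alpha-large-hole} yields $W^{\varepsilon^{\overline\alpha}}(x/\varepsilon)=\varepsilon^{-2\overline\alpha}$, and the prefactor $\varepsilon^{2\overline\alpha}$ cancels it exactly. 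Item (iii) is immediate since scaling by the positive constant $\varepsilon^{2\overline\alpha}$ preserves the normalization $\inf W^{\varepsilon^{\overline\alpha}}=0$.

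The main obstacle is (iv), which is the only item that actually uses the size of the enlarged safety region $T_{\sqrt{a^{\varepsilon^\alpha}\varepsilon}}$. My plan is to apply \Cref{lem-correctorbound}(ii) to the $1$-periodic function $W^{\varepsilon^{\overline\alpha}}$ to obtain
\begin{equation*}
W^{\varepsilon^{\overline\alpha}}(y)\le C\,|y-k|^{2-n} \quad \text{whenever } \overline a^{\varepsilon^{\overline\alpha}}\le |y-k|\le r_1,
\end{equation*}
supplemented by the uniform bound $W^{\varepsilon^{\overline\alpha}}\le C$ outside the $r_1$-neighborhoods of $\mathbb Z^n$ from \Cref{lem-correctorbound}(i). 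For $x\in\mathbb R^n\setminus T_{\sqrt{a^{\varepsilon^\alpha}\varepsilon}}$, pick $k\in\mathbb Z^n$ with $x\in Q_\varepsilon(\varepsilon k)$; setting $y=x/\varepsilon$ and rewriting $|y-k|^{2-n}=\varepsilon^{n-2}|x-\varepsilon k|^{2-n}$, the annular bound combined with $|x-\varepsilon k|\ge \sqrt{a^{\varepsilon^\alpha}\varepsilon}$ gives
\begin{equation*}
\widehat w^\varepsilon(x)\le C\,\varepsilon^{2\overline\alpha+n-2}\bigl(a^{\varepsilon^\alpha}\varepsilon\bigr)^{(2-n)/2}.
\end{equation*}
The remaining step is a bookkeeping of exponents: using $2\overline\alpha+(n-2)=n\alpha$ and $a^{\varepsilon^\alpha}=\varepsilon^{n\alpha/(n-2)}$, the right-hand side collapses to $C\,\varepsilon^{(n(\alpha-1)+2)/2}$, which is $o(1)$ precisely because $\alpha>1$. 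The range $|y-k|\ge r_1$ is harmless since there the factor $\varepsilon^{2\overline\alpha}$ alone already forces $\widehat w^\varepsilon\to 0$. This completes the four verifications, so no deep new ingredient is required beyond the corrector estimates already proved in \Cref{sec-correctors}; the only care needed is in tracking the exponents in (iv).
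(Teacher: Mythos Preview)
Your proof is correct and follows essentially the same route the paper has in mind: the paper does not spell out a proof but simply cites \eqref{eq-for-W-ep-alpha-large-hole}, \eqref{def-of-corrector-from-W-ep-in-small-hole}, and \Cref{lem-convergence}, and you have supplied exactly the computations these references encode. The only cosmetic difference is that for (iv) you invoke \Cref{lem-correctorbound} directly rather than \Cref{lem-convergence}, but since the proof of \Cref{lem-convergence} is itself just \Cref{lem-correctorbound} plus the same exponent bookkeeping you carry out, the two are interchangeable here. One tiny remark: in your final sentence for (iv) the exponent $(n(\alpha-1)+2)/2$ is in fact positive for all $\alpha>(n-2)/n$, so ``precisely because $\alpha>1$'' slightly overstates the role of the hypothesis---but under the standing assumption $\alpha>1$ the conclusion is of course valid.
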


By \Cref{thm-convergence}, \Cref{lem-sub-critical-correctibility-condition}, and an argument similar to the proof of \Cref{thm-main} with the corrector $\widehat{w}^{\varepsilon}$, the limit function $u$ satisfies
\begin{equation*}
    \overline{a}_{ij} D_{ij}u+\widehat{\beta}_0 (\varphi-u)_+ =0
\end{equation*}
where
\begin{equation*}
    \widehat{\beta}_0=\lim_{\varepsilon\to 0}\widehat{\beta}^{\varepsilon}=0. 
\end{equation*}
Hence, we deduce the following homogenization result when $\alpha>1$.
\begin{theorem}\label{sub-critical-case-small-ball}
	Let $\alpha>1$ and let
 \begin{equation*}
\varphi_{\varepsilon^{\alpha}}(x):=
\left\{ \begin{array}{ll} 
\varphi(x) & \text{if $x \in T_{a^{\varepsilon^{\alpha}}}$} \\
0 & \text{otherwise}.
\end{array} \right.
\end{equation*}
Suppose that $u_{\varepsilon}$ is the least viscosity supersolution of \eqref{problem-Le} with the obstacle $\varphi_{\varepsilon}$ being replaced by $\varphi_{\varepsilon^{\alpha}}$. Then there exists a function $u \in C(\overline{\Omega})$ such that $\underline{u}_{\varepsilon} \to u$ uniformly in $\Omega$. Moreover,  there exists a uniformly elliptic constant matrix $\overline{a}_{ij}$ such that $u$ is a viscosity solution of
		\begin{equation*}
		\left\{ \begin{aligned}
		\overline{a}_{ij} D_{ij}u&=0 && \text{in $\Omega$} \\
		u&=0 && \text{on $\partial \Omega$}.
		\end{aligned} \right.
		\end{equation*}
\end{theorem}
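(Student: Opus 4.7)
The plan is to adapt the proof of \Cref{thm-main} essentially verbatim, substituting the critical corrector $w_2^{\varepsilon_m}$ by the subcritical corrector $\widehat w^{\varepsilon_m}$ from \eqref{def-of-corrector-from-W-ep-in-small-hole} and exploiting the decisive new input $\widehat\beta^{\varepsilon}\to 0$ provided by \Cref{lem-sub-critical-correctibility-condition}(i). The first step is compactness: the outputs of \Cref{sec-limit} (discrete gradient estimate, $\varepsilon$-flatness, and the almost-equicontinuous Arzel\`a--Ascoli extraction leading to \Cref{thm-convergence}) depend only on the periodic structure of the obstacles, not on the specific hole size. Applied to the present $\varepsilon$-problem with the intermediate scale $b^{\varepsilon^\alpha}=\sqrt{\varepsilon a^{\varepsilon^\alpha}}$, they deliver a subsequence $\varepsilon_m\to 0$ and a function $u\in C^{0,1}(\overline\Omega)$ with $\underline u_{\varepsilon_m}\to u$ uniformly in $\Omega$.

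To identify the effective equation, we take a quadratic polynomial $P$ touching $u$ from below (respectively, above) at an interior point, assume by contradiction $\overline a_{ij}D_{ij}P\geq \eta>0$ (respectively, $\leq -\eta$), and use the same test function as in \Cref{thm-main},
\[
\widetilde P_m(x):=\widetilde P(x)+\varepsilon_m^2\, w_{1,D^2\widetilde P}(x/\varepsilon_m)+\bigl((\varphi(0)-u(0))_+\mp\eta\bigr)\widehat w^{\varepsilon_m}(x),
\]
with $\widetilde P(x)=P(x)\mp\tfrac{K}{2}(|x|^2-R^2/2)$, on the perturbed domain $D_{R,m}$. Because $\widehat w^{\varepsilon_m}=1$ on $\partial T_{a^{\varepsilon_m^\alpha}}$ by \Cref{lem-sub-critical-correctibility-condition}(ii), the boundary compatibility on the obstacles runs exactly as in the critical case (splitting into $\varphi(0)\geq u(0)$ and $\varphi(0)<u(0)$); and because $\widehat w^{\varepsilon_m}=o(1)$ on $\partial D_{R,m}\subset\Omega\setminus T_{b^{\varepsilon_m^\alpha}}$ by \Cref{lem-sub-critical-correctibility-condition}(iv), the quadratic penalty $\mp KR^2/32$ still dominates the corrector contribution on that boundary.

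The equation side is where the regime genuinely changes: combining \Cref{lem:periodic} with \Cref{lem-sub-critical-correctibility-condition}(i) gives
\[
a_{ij}(x/\varepsilon_m)D_{ij}\widetilde P_m=\overline a_{ij}D_{ij}\widetilde P+\bigl((\varphi(0)-u(0))_+\mp\eta\bigr)\widehat\beta^{\varepsilon_m},
\]
and since $\widehat\beta^{\varepsilon_m}\to 0$, the second term is $o(1)$. Hence $\overline a_{ij}D_{ij}\widetilde P\geq \eta-n\Lambda K$ already forces strict positivity (respectively, strict negativity) in $D_{R,m}\setminus T_{a^{\varepsilon_m^\alpha}}$ for small $K$ and large $m$, with no contribution from a strange term. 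Comparison at a sequence $x_m\to 0$ with $u_{\varepsilon_m}(x_m)\to u(0)$ for the supersolution test, and the least-supersolution argument with $v_{\varepsilon_m}=\min\{u_{\varepsilon_m},\widetilde P_m\}$ inside $D_{R,m}$ for the subsolution test, then produce the contradictions $P(0)+KR^2/8\leq u(0)$ and $u(0)\leq P(0)-KR^2/8$, respectively.

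Therefore $u$ is a viscosity solution of $\overline a_{ij}D_{ij}u=0$ in $\Omega$ with $u=0$ on $\partial\Omega$; uniqueness of this Dirichlet problem upgrades the convergence to the full sequence $\varepsilon\to 0$. The adaptation is essentially bookkeeping once the correct corrector is in hand; the only delicate point is ensuring that \Cref{lem-sub-critical-correctibility-condition}(iv) is strong enough to absorb $\widehat w^{\varepsilon_m}$ on the perturbed outer boundary $\partial D_{R,m}$, which is precisely why the choice $b^{\varepsilon^\alpha}=\sqrt{\varepsilon a^{\varepsilon^\alpha}}$ (rather than a smaller scale) is made in the definition of $\underline u_\varepsilon$ and of $D_{R,m}$.
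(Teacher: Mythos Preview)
Your proposal is correct and follows essentially the same approach as the paper: invoke the compactness results of \Cref{sec-limit} (which are hole-size agnostic), then rerun the proof of \Cref{thm-main} with $\widehat w^{\varepsilon_m}$ in place of $w_2^{\varepsilon_m}$ and observe that $\widehat\beta^{\varepsilon_m}\to 0$ annihilates the strange term. The paper states this in one line, so your write-up is in fact more explicit than the original.
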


We next consider the case of $\alpha \in ((n-2)/n,1)$, that is, when the size of holes is larger than the critical one. We define correctors by
\begin{equation}\label{def-of-corrector-from-W-ep-in-large-hole}
\widetilde W^\varepsilon(y):= W^{\varepsilon^{\overline \alpha}}(y) + \frac{1}{\varepsilon^2} - \frac{1}{\varepsilon^{2\overline\alpha}} \quad \mbox{and} \quad \widetilde w^\varepsilon(x) := \varepsilon^2 \widetilde W^\varepsilon(x/\varepsilon).
\end{equation}
Then by \eqref{eq-for-W-ep-alpha-large-hole}, \eqref{def-of-corrector-from-W-ep-in-large-hole}, and \Cref{lem-convergence}, we have the following results.
\begin{lemma}\label{eq-corrector-subcritical-case}
Let $(n-2)/n<\alpha<1$. Then the following hold:
\begin{enumerate}[(i)]
\item $a_{ij}(x/\varepsilon) D_{ij} \widetilde w^\varepsilon(x) = \beta^{\varepsilon^{\overline{\alpha}}} \rightarrow \beta_0$ as $\varepsilon\to 0$.

\item $\widetilde w^\varepsilon = 1$ on $\partial T_{a^{\varepsilon^{\alpha}}}$.

\item $1 - \varepsilon^{2-2\overline\alpha} \le  \widetilde w^\varepsilon(x) \le 1 $ in $\mathbb{R}^n \setminus T_{a^{\varepsilon^{\alpha}}}$.
\end{enumerate}
\end{lemma}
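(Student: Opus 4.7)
The plan is to verify each item by a direct computation, leveraging the fact that $\widetilde{W}^{\varepsilon}$ differs from $W^{\varepsilon^{\overline{\alpha}}}$ only by the additive constant $\varepsilon^{-2} - \varepsilon^{-2\overline{\alpha}}$, so that second derivatives are unaffected by the shift. Throughout, I keep in mind the two consequences of the hypothesis $(n-2)/n < \alpha < 1$: it forces $0 < \overline{\alpha} < 1$, which in turn gives $\varepsilon^{\overline{\alpha}} \to 0$ (so that $\beta^{\varepsilon^{\overline{\alpha}}}$ fits the framework of \Cref{cor-critical} and \Cref{thm-uniquebeta}) and $\varepsilon^{2 - 2\overline{\alpha}} \to 0$ (so that the lower bound in (iii) is asymptotically nontrivial).

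For (i), I would differentiate $\widetilde{w}^{\varepsilon}(x) = \varepsilon^{2} W^{\varepsilon^{\overline{\alpha}}}(x/\varepsilon) + 1 - \varepsilon^{2 - 2\overline{\alpha}}$: the constant shift vanishes under differentiation, and the two factors of $\varepsilon^{-1}$ from the chain rule combine with the prefactor $\varepsilon^{2}$ to give $D_{ij}\widetilde{w}^{\varepsilon}(x) = (D_{ij} W^{\varepsilon^{\overline{\alpha}}})(x/\varepsilon)$. The PDE from \Cref{cor-critical} at parameter $\varepsilon^{\overline{\alpha}}$ then gives $a_{ij}(x/\varepsilon) D_{ij} \widetilde{w}^{\varepsilon}(x) = \beta^{\varepsilon^{\overline{\alpha}}}$, and \Cref{thm-uniquebeta} immediately yields $\beta^{\varepsilon^{\overline{\alpha}}} \to \beta_{0}$.

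For (ii), I would use the scaling identity $\overline{a}^{\varepsilon^{\overline{\alpha}}} = a^{\varepsilon^{\alpha}}/\varepsilon$, which is the defining relation of $\overline{\alpha}$ recorded in the paper, so that $x \mapsto x/\varepsilon$ maps $\partial T_{a^{\varepsilon^{\alpha}}}$ onto $\partial \overline{T}_{\overline{a}^{\varepsilon^{\overline{\alpha}}}}$. On the latter set $W^{\varepsilon^{\overline{\alpha}}} = \varepsilon^{-2\overline{\alpha}}$, hence $\widetilde{W}^{\varepsilon} = \varepsilon^{-2}$ and $\widetilde{w}^{\varepsilon} = 1$. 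For (iii), I would establish the two-sided bound $0 \le W^{\varepsilon^{\overline{\alpha}}}(y) \le \varepsilon^{-2\overline{\alpha}}$ on $\mathbb{R}^{n} \setminus \overline{T}_{\overline{a}^{\varepsilon^{\overline{\alpha}}}}$: the lower bound is the normalization $\inf W^{\varepsilon^{\overline{\alpha}}} = 0$ from \Cref{cor-critical}, while the upper bound follows from the maximum principle applied on the periodicity cell $Q_{1} \setminus B_{\overline{a}^{\varepsilon^{\overline{\alpha}}}}$, since the strict inequality $a_{ij} D_{ij} W^{\varepsilon^{\overline{\alpha}}} = \beta^{\varepsilon^{\overline{\alpha}}} > 0$ rules out an interior maximum, and periodicity together with the strong maximum principle rules out the maximum being on $\partial Q_{1}$, leaving only $\partial B_{\overline{a}^{\varepsilon^{\overline{\alpha}}}}$ where $W^{\varepsilon^{\overline{\alpha}}} = \varepsilon^{-2\overline{\alpha}}$. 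Adding back the constant shift and multiplying through by $\varepsilon^{2}$ gives exactly $1 - \varepsilon^{2-2\overline{\alpha}} \le \widetilde{w}^{\varepsilon} \le 1$.

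The whole argument is essentially careful bookkeeping of exponents, so no real obstacle is anticipated; the only point worth stressing is that the admissible range $(n-2)/n < \alpha < 1$ translates exactly into $0 < \overline{\alpha} < 1$, which simultaneously secures the convergence $\beta^{\varepsilon^{\overline{\alpha}}} \to \beta_{0}$ in (i) and the vanishing of the gap between upper and lower bounds in (iii).
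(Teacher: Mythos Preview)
Your proof is correct and follows essentially the same approach as the paper, which leaves the lemma without a written proof and simply refers to \eqref{eq-for-W-ep-alpha-large-hole}, \eqref{def-of-corrector-from-W-ep-in-large-hole}, and \Cref{lem-convergence}. Your explicit maximum-principle justification for the upper bound $W^{\varepsilon^{\overline{\alpha}}}\le \varepsilon^{-2\overline{\alpha}}$ and your check that $(n-2)/n<\alpha<1$ is equivalent to $0<\overline{\alpha}<1$ make the argument self-contained; the citation of \Cref{lem-convergence} in the paper is not actually needed for this particular lemma.
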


\begin{theorem}
Let $(n-2)/n<\alpha<1$ and let
 \begin{equation*}
\varphi_{\varepsilon^{\alpha}}(x):=
\left\{ \begin{array}{ll} 
\varphi(x) & \text{if $x \in T_{a^{\varepsilon^{\alpha}}}$} \\
0 & \text{otherwise}.
\end{array} \right.
\end{equation*}
Suppose that $u_{\varepsilon}$ is the least viscosity supersolution of \eqref{problem-Le} with the obstacle $\varphi_{\varepsilon}$ being replaced by $\varphi_{\varepsilon^{\alpha}}$. Then there exists a function $u \in C(\overline{\Omega})$ such that ${u}_{\varepsilon} \to u$ uniformly in $\Omega$. Moreover, the limit $u$ is a least viscosity super solution of 
\begin{equation*}
		\left\{ \begin{aligned}
		\overline{a}_{ij} D_{ij}u&\leq 0 && \text{in $\Omega$} \\
		u&=0 && \text{on $\partial \Omega$}\\
  u&\geq\varphi && \text{in $\Omega$}.
		\end{aligned} \right.
		\end{equation*}
\end{theorem}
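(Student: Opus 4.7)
The proof adapts the viscosity-corrector strategy of \Cref{thm-main}, with the corrector $\widetilde{w}^{\varepsilon}$ of \eqref{def-of-corrector-from-W-ep-in-large-hole} playing the role of $w_2^{\varepsilon}$, and exploits the key property $\widetilde{w}^{\varepsilon} \to 1$ uniformly on $\mathbb{R}^n \setminus T_{a^{\varepsilon^{\alpha}}}$ established in \Cref{eq-corrector-subcritical-case}(iii). The overall plan has four steps: (1) extract a limit $u$ along a subsequence; (2) verify $\overline{a}_{ij} D_{ij} u \leq 0$ in the viscosity sense; (3) verify the obstacle condition $u \geq \varphi$ on the full domain; (4) upgrade the convergence of $\underline{u}_{\varepsilon}$ to uniform convergence of $u_{\varepsilon}$ itself, and verify that $u$ is the least supersolution of the limiting obstacle problem.

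For Step 1, the arguments of \Cref{lem-discrete}, \Cref{lem-cubeosc}, and \Cref{thm-convergence} go through verbatim with the non-critical hole scale, because the only input needed is that $(a^{\varepsilon^{\alpha}}/\varepsilon)^{(n-2)/2} = \varepsilon^{(n\alpha-(n-2))/2}$ tends to $0$, which holds precisely for $\alpha > (n-2)/n$. This produces a subsequence along which $\underline{u}_{\varepsilon_m} \to u$ uniformly in $\Omega$ with $u \in C^{0,1}(\overline{\Omega})$. Step 2 is essentially the subsolution direction of the proof of \Cref{thm-main}(i), but simplified: given a quadratic test polynomial $P$ touching $u$ from below at $x_0$ with $\overline{a}_{ij} D_{ij} P > 0$, the test function $\widetilde{P}_m(x) = P(x) - \frac{K}{2}(|x-x_0|^2 - R^2/2) + \varepsilon_m^2 w_{1, D^2 \widetilde{P}}(x/\varepsilon_m)$ needs no obstacle-corrector contribution (since the homogenized equation contains no strange term), and the comparison argument on a perturbed ball $D_{R, m}$ carries over verbatim.

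Step 3 is the heart of the argument and the main technical obstacle. Suppose for contradiction that $u(x_0) < \varphi(x_0) - \eta$ for some $\eta > 0$; we may assume $\varphi(x_0) > 0$, since otherwise $u(x_0) \geq 0 \geq \varphi(x_0)$ follows from $u_{\varepsilon} \geq 0$ outside the holes and the uniform convergence of $\underline{u}_{\varepsilon_m}$. Choose $r > 0$ so small that $u(x) < \varphi(x) - 3\eta/4$ on $B_r(x_0)$. For a constant $K > 0$ to be chosen, define the barrier
\begin{equation*}
V_{\varepsilon_m}(x) := (\varphi(x_0) - \eta/2)\, \widetilde{w}^{\varepsilon_m}(x) - \tfrac{K}{2}\bigl(|x-x_0|^2 - r^2/4\bigr).
\end{equation*}
Using $a_{ij}(x/\varepsilon_m) D_{ij} \widetilde{w}^{\varepsilon_m} = \widetilde{\beta}^{\varepsilon_m} = \beta^{\varepsilon_m^{\overline{\alpha}}} \to \beta_0 > 0$, one verifies that for $K$ small enough (uniformly in $m$) the function $V_{\varepsilon_m}$ is a strict subsolution $a_{ij}(x/\varepsilon_m) D_{ij} V_{\varepsilon_m} > 0$ in $B_r(x_0) \setminus T_{a^{\varepsilon_m^{\alpha}}}$; on $\partial T_{a^{\varepsilon_m^{\alpha}}} \cap B_r$ we have $V_{\varepsilon_m} \leq \varphi(x_0) - \eta/2 \leq \varphi \leq u_{\varepsilon_m}$, while on $\partial B_r$ the quadratic penalty $-\tfrac{K}{2}(3r^2/4)$ combined with $\widetilde{w}^{\varepsilon_m} \leq 1$ makes $V_{\varepsilon_m} \leq u_{\varepsilon_m}$ provided $K \geq \eta/r^2$ (absorbing $o(1)$ terms). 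The two conditions on $K$ are compatible once $r$ is chosen with $r^2 \gtrsim \eta/\beta_0$, which can be done consistently with the continuity-based choice of $r$. The comparison principle then yields $V_{\varepsilon_m}(x_0) \leq u_{\varepsilon_m}(x_0)$; passing to the limit and using $\widetilde{w}^{\varepsilon_m} \to 1$ contradicts $u(x_0) < \varphi(x_0) - \eta$.

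Finally, for Step 4, the uniform convergence of $u_{\varepsilon_m}$ itself follows once $u \geq \varphi$ is known: on each hole $B_{a^{\varepsilon_m^{\alpha}}}(\varepsilon_m k)$, the supersolution $u_{\varepsilon_m}$ is pinched between $\varphi(\varepsilon_m k) + o(1)$ (from the obstacle) and the harmonic extension from the intermediate shell (whose value converges to $u(\varepsilon_m k) \geq \varphi(\varepsilon_m k)$ by Step 3 and by uniform convergence of $\underline{u}_{\varepsilon_m}$), so $u_{\varepsilon_m}(x) - u(x) \to 0$ uniformly on holes as well. For the least-supersolution property, given any viscosity supersolution $v$ of the homogenized obstacle problem, the test function $v_{\varepsilon_m}(x) := v(x) + \varepsilon_m^2 w_{1, D^2 v}(x/\varepsilon_m)$ is a viscosity supersolution of the $\varepsilon_m$-obstacle problem (since $a_{ij}(x/\varepsilon_m) D_{ij} v_{\varepsilon_m} = \overline{a}_{ij} D_{ij} v \leq 0$ by \Cref{lem:periodic2}, and $v_{\varepsilon_m} \geq v \geq \varphi$ on the holes); minimality of $u_{\varepsilon_m}$ yields $u_{\varepsilon_m} \leq v_{\varepsilon_m}$, and passing to the limit gives $u \leq v$. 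Full convergence (independent of subsequence) is then automatic. The main difficulty lies in Step 3: calibrating $K$ and $r$ to satisfy both the strict-subsolution and boundary-comparison inequalities simultaneously, which relies crucially on the uniform bound $1 - \varepsilon^{2-2\overline{\alpha}} \leq \widetilde{w}^{\varepsilon} \leq 1$ together with $\widetilde{\beta}^{\varepsilon} \to \beta_0 > 0$.
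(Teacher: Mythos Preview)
Your Step~3 barrier has a genuine calibration problem. The strict-subsolution requirement for $V_{\varepsilon_m}$ is
\[
(\varphi(x_0)-\eta/2)\,\beta^{\varepsilon_m^{\overline\alpha}} - K\,a_{ii}(x/\varepsilon_m) > 0,
\]
which forces $K \lesssim (\varphi(x_0)-\eta/2)\beta_0$. For the boundary comparison on $\partial B_r$, the only lower bound available on $u_{\varepsilon_m}$ is $u_{\varepsilon_m}\approx u \geq u(x_0)-Lr$ (or simply $u_{\varepsilon_m}\geq 0$); in either case the requirement $V_{\varepsilon_m}\leq u_{\varepsilon_m}$ translates into $K \gtrsim (\varphi(x_0)-u(x_0)-\eta/2)/r^2$, not $\eta/r^2$ as you wrote. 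Since $\varphi(x_0)-u(x_0)$ may be of order $\varphi(x_0)$ rather than $\eta$, compatibility of the two bounds forces $r^2 \gtrsim 1/\beta_0$, a \emph{fixed universal} lower bound on $r$ that need not be achievable (and certainly cannot be made ``consistent with the continuity-based choice of $r$''). There is also a sign slip on $\partial T_{a^{\varepsilon_m^{\alpha}}}\cap B_r$: for $|x-x_0|<r/2$ the quadratic correction is \emph{positive}, so $V_{\varepsilon_m}(x)\geq \varphi(x_0)-\eta/2$, and you then need $\varphi(x)\geq \varphi(x_0)-\eta/2 + Kr^2/8$, adding another constraint on $K$ and $r$.

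The paper bypasses all of this with a \emph{global} barrier: set $\psi^\varepsilon(x):=\varphi(x)+K(\widetilde{w}^\varepsilon(x)-1)$ with $K>2\|a_{ij}D_{ij}\varphi\|_{L^\infty}/\beta_0+1$. Then $a_{ij}D_{ij}\psi^\varepsilon\geq -\|a_{ij}D_{ij}\varphi\|_{L^\infty}+K\beta_0/2>0$ in $\Omega\setminus T_{a^{\varepsilon^\alpha}}$, while $\psi^\varepsilon=\varphi\leq u_\varepsilon$ on $\partial T_{a^{\varepsilon^\alpha}}$ and $\psi^\varepsilon\leq\varphi\leq 0=u_\varepsilon$ on $\partial\Omega$. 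Comparison on the whole of $\Omega\setminus T_{a^{\varepsilon^\alpha}}$ gives $u_\varepsilon\geq\varphi-K\varepsilon^{2-2\overline\alpha}$ directly, with no localization and no competing constraints on $K$. The key differences are (a) using $\varphi$ itself rather than the constant $\varphi(x_0)$, so that its second derivatives are absorbed by a single large $K$, and (b) comparing on $\partial\Omega$ (where $u_\varepsilon=0$ is known) instead of on $\partial B_r$ (where no useful lower bound for $u_\varepsilon$ is available).

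Separately, your Step~4 argument for the least-supersolution property is not well posed: the corrector $w_{1,D^2v}$ in \Cref{lem:periodic} is defined only for a constant symmetric matrix $M$, so $v+\varepsilon_m^2 w_{1,D^2v}(\cdot/\varepsilon_m)$ makes no sense for a general supersolution $v$. The standard route is to run the perturbed-test-function argument in the subsolution direction on the open set $\{u>\varphi\}$ (exactly as in part~(ii) of the proof of \Cref{thm-main}, but with no obstacle-corrector term) to obtain $\overline a_{ij}D_{ij}u\geq 0$ there; combined with $\overline a_{ij}D_{ij}u\leq 0$, $u\geq\varphi$, and the boundary data, this identifies $u$ as the unique solution of the limiting obstacle problem.
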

\begin{proof}
By following similar arguments as in \Cref{sub-critical-case-small-ball}, it is enough to show that $u \geq \varphi$ in $\Omega$. Indeed, we choose a constant $K$ so that 
\begin{equation*}
K>\frac{2\left\|a_{ij}D_{ij}\varphi\right\|_{L^{\infty}(\Omega)}}{\beta_0}+1,
\end{equation*}
and define a function 
\begin{equation*}
\psi^{\varepsilon}(x):=\varphi(x)+K\left(\widetilde{w}^{\varepsilon}(x)-1\right)
\end{equation*}
where $\widetilde{w}^{\varepsilon}$ is given by \Cref{eq-corrector-subcritical-case}. Then for sufficiently small $\varepsilon>0$ we have
\begin{equation*}
a_{ij}D_{ij}\psi^{\varepsilon}\geq a_{ij}D_{ij}\varphi+\frac{K\beta_0}{2}\geq 0 \qquad \mbox{in $\Omega\setminus T_{a^{\varepsilon^{\alpha}}}$}
\end{equation*}
and
\begin{equation*}
    \psi^{\varepsilon}(x) \leq u_{\varepsilon}(x) \qquad \mbox{on $\partial (\Omega\setminus T_{a^{\varepsilon^{\alpha}}})$}.
\end{equation*}
Thus, by comparison principle together with \Cref{eq-corrector-subcritical-case}, we have
\begin{equation*}
    u_{\varepsilon}(x)\geq \psi^{\varepsilon}(x)=\varphi(x)+K\left(\widetilde{w}^{\varepsilon}(x)-1\right)\geq \varphi(x)-K\varepsilon^{2-2\overline\alpha} \quad \mbox{in $\Omega\setminus T_{a^{\varepsilon^{\alpha}}}$},
\end{equation*}
and so the desired obstacle condition for $u$ follows.
\end{proof}

\bibliographystyle{abbrv}

\end{document}